\documentclass[reqno,12pt]{amsart}

\usepackage[all]{xypic}
\usepackage{enumerate}
\usepackage{hyperref}
\usepackage{a4}
\usepackage{amssymb}
\usepackage{color}


\DeclareMathOperator{\res}{res}

\DeclareMathOperator{\pr}{pr}

\DeclareMathOperator{\ind}{ind}
\DeclareMathOperator{\supp}{supp}

\DeclareMathOperator{\gr}{gr}
\DeclareMathOperator{\ev}{ev}
\DeclareMathOperator{\rank}{rank}

\DeclareMathOperator{\tr}{tr}

\DeclareMathOperator{\vvol}{vol}

\newcommand\goe{\mathfrak g}
\newcommand\poe{\mathfrak p}
\newcommand\noe{\mathfrak n}

\newcommand\llangle{\langle\!\langle}
\newcommand\rrangle{\rangle\!\rangle}

\DeclareMathOperator{\Ad}{Ad}

\DeclareMathOperator{\id}{id}

\DeclareMathOperator{\SU}{SU}

\DeclareMathOperator{\GL}{GL}

\DeclareMathOperator{\Aut}{Aut}
\DeclareMathOperator{\eend}{end}

\DeclareMathOperator{\inc}{inc}

\newcommand\Z{\mathbb Z}
\newcommand\N{\mathbb N}
\newcommand\R{\mathbb R}
\newcommand\C{\mathbb C}

\newcommand\st{2}
\newcommand\op{\textrm{op}}
\newcommand\prop{\textrm{prop}}

\newcommand{\DO}{\mathcal D\mathcal O}

\newcommand\gl{\mathfrak{gl}}

\newcommand\itemref[1]{(\ref{#1})}

\theoremstyle{plain}
  \newtheorem{theorem}{Theorem}
  \newtheorem{corollary}{Corollary}
  \newtheorem{lemma}{Lemma}
  \newtheorem*{lemma*}{Lemma}
  \newtheorem{proposition}{Proposition}
  \newtheorem*{proposition*}{Proposition}
  
  \newtheorem*{conjecture*}{Conjecture}
  
\theoremstyle{definition}
  \newtheorem{definition}{Definition}
  
  \newtheorem*{example*}{Example}
  
\theoremstyle{remark}
  \newtheorem*{remark*}{Remark}
  \newtheorem{remark}{Remark}

\begin{document}

\title{The heat asymptotics on filtered manifolds}

\author{Shantanu Dave}


\address{Shantanu Dave,
         Wolfgang Pauli Institute
         c/o Faculty of Mathematics,
         University of Vienna,
         Oskar-Morgenstern-Platz 1,
         1090 Vienna,
         Austria.}

\curraddr{Department of Mathematics, 
          Rutgers University,
          Hill Center for the Mathematical Sciences, 
          110 Frelinghuysen Rd.,
          Piscataway, NJ 08854-8019.}

\email{shantanu.dave@univie.ac.at}
\email{shantanu.dave@rutgers.edu}

\author{Stefan Haller}

\address{Stefan Haller,
         Department of Mathematics,
         University of Vienna,
         Oskar-Morgenstern-Platz 1,
         1090 Vienna,
         Austria.}

\email{stefan.haller@univie.ac.at}


\begin{abstract}
The short-time heat kernel expansion of elliptic operators provides a link between local and global features of classical geometries.
For many geometric structures related to (non-)involutive distributions, the natural differential operators tend to be Rockland, hence hypoelliptic.
In this paper we establish a universal heat kernel expansion for formally selfadjoint non-negative Rockland differential operators on general closed filtered manifolds.
The main ingredient is the analysis of parametrices in a recently constructed calculus adapted to these geometric structures.
The heat expansion implies that the new calculus, a more general version of the Heisenberg calculus, also has a non-commutative residue.
Many of the well known implications of the heat expansion such as, the structure of the complex powers, the heat trace asymptotics, the continuation of the zeta function, as well as Weyl's law for the eigenvalue asymptotics, can be adapted to this calculus.
Other consequences include a McKean--Singer type formula for the index of Rockland differential operators.
We illustrate some of these results by providing a more explicit description of Weyl's law for Rumin--Seshadri operators associated with curved BGG sequences over 5-manifolds equipped with a rank two distribution of Cartan type.
\end{abstract}

\keywords{Filtered manifold; Rockland operator; differential operator; pseudodifferential operator; hypoelliptic operator; parametrix; heat kernel expansion; heat trace asymptotics; complex powers; zeta function; Weyl's law; McKean--Singer index formula; holomorphic families of pseudodifferential operators; non-commutative residue; generic rank two distribution in dimension five; distribution of Cartan type}

\subjclass[2010]{58J35 (58A30, 58J20, 58J40, 58J42)}

\maketitle

\setcounter{tocdepth}{2}
\tableofcontents

\section{Introduction}

Many geometric structures related to (non-)involutive distributions can be described in terms of an underlying filtered manifold.
These include contact structures, Engel manifolds, and all regular parabolic geometries.
Filtered analogues of classical (elliptic) operators are usually hypoelliptic \cite{DH17}.
It is an old dream to link local and global aspects of filtered geometry with the spectrum and index of hypoelliptic operators by studying the short-time asymptotics of heat kernels, similarly to the elliptic case.
Active research in this direction started in the 70s, see \cite{FS74,RS76,M79}, and was quite popular in 80s, see \cite{FS82,BG88}.
The main accomplishment had been the development of the Heisenberg calculus \cite{BG88,T84,P08} and its application to operators in contact and CR geometries.
On graded nilpotent Lie groups, the representation theory allowed a harmonic analysis perspective of hypoellipticity \cite{HN78,HN79,CGGP92}.

The progress on these problems has been revived by the advent of new tangent groupoid techniques to study analysis on these manifolds \cite{E10a,E10b,CP19,EY15v5,EY16}.
When the classical (elliptic) pseudodifferential calculus was described in \cite{DS14} in a coordinate-free way using the tangent groupoid, it
facilitated the construction of a pseudodifferential calculus on general filtered manifolds \cite{EY15v5,EY16}.
This calculus has helped resolve two main hitherto unsurmountable challenges in analysis.
It first allowed an explicit handle on the parametrix of a hypoelliptic operator similar to the one familiar from the elliptic case or nilpotent Lie groups as in \cite{CGGP92}.
The universality of this calculus and the existence of the parametrix \cite{DH17} provide a clean way to obtain a general short-time heat kernel expansion for a large class of differential operators.
Remarkably, this heat expansion has the same structure as the one for elliptic operators, bringing back the old expectation that the analysis should relate local geometric properties to global invariants.

The differential operators we consider satisfy a pointwise Rockland condition, a condition on their non-commutative principal symbols that guarantees that these operators are hypoelliptic.
We also present here several consequences of the heat kernel asymptotics, which are well known for elliptic operators, but are not known for Rockland operators in this generality.
These include the explicit structure of complex powers of the operators, Weyl's asymptotic formula for the growth of eigenvalues, the McKean--Singer index formula, the description of a non-commutative residue for Heisenberg calculus, and the construction of a K-homology class associated to Rockland operators.

Let us look at the manifolds, the operators and their heat kernel expansions in more detail.

\subsection{Filtered manifolds}

Filtered manifolds provide a very general setup to study geometry \cite{M93, M02}.
These geometries include foliations, contact manifolds, Engel structures on $4$-manifolds, graded nilpotent Lie groups, and all regular parabolic geometries.
The equiregular Carnot--Carath\'eodory spaces considered in \cite{Gro96} are also filtered manifolds.

A filtered manifold $M$ has a natural associated non-commutative tangent bundle, that is, a simply connected nilpotent Lie group $\mathcal T_xM$ attached to each point $x$ in $M$.
The harmonic analysis of this non-commutative tangent space $\mathcal T_xM$ is related to the analysis of differential operators on $M$.
An effective way to exhibit this relationship uses the so-called Heisenberg tangent groupoid \cite{CP19, EY16}.
The pseudodifferential calculus on filtered manifolds \cite{EY15v5} was constructed using the Heisenberg tangent groupoid.
It was further studied in \cite{DH17} where we show  that a pointwise Rockland condition implies hypoellipticity.
We remark that the idea of Heisenberg calculus is based on the work of Debourd--Skandalis \cite{DS14}, who studied the standard pseudodifferential calculus using Connes' tangent groupoid construction.

The Heisenberg calculus can be globally defined on a filtered manifold $M$ using the Heisenberg tangent groupoid, $\mathbb TM$.
This is a groupoid that provides a deformation of the pair groupoid $M\times M$ into the non-commutative tangent bundle $\mathcal TM$, which is obtained by giving
\[
\mathbb TM=\bigl(\mathcal TM\times \{0\}\bigr)\sqcup\bigl(M\times M\times \mathbb R^{\times}\bigr)
\]
a smooth structure which makes all the groupoid maps smooth.
There is an action of $\mathbb R^{\times}:=\mathbb R\setminus\{0\}$ on $\mathbb TM$ called the zoom action and the calculus is defined to be those  distributions on $M\times M$ that admit an essentially homogeneous extension to $\mathbb TM$ with respect to the zoom action.

There is an equivalent local description of the kernels in suitably chosen coordinates.
In this paper we will mostly work with the local version, however the global description allows a neater formulation of the concept of holomorphic families described below.
In the local description the kernels are described in a tubular neighborhood of the diagonal and the germs of diffeomorphism of these tubular neighborhood to the normal bundle is determined by the Euler-like vector field provided by the filtration \cite{HS18}.
Here we will stick to the original Heisenberg tangent groupoid description for global invariance of the calculus.

\subsection{Rockland differential operators}
Let us briefly recall the kind of operators studied in this article.
These are hypoelliptic operators on filtered manifolds which are elliptic in the Heisenberg calculus described above.

Let $M$ be a closed filtered manifold and suppose $E$ is a complex vector bundle over $M$.
Let $A$ be a differential operator acting on sections of $E$.
In coordinates adapted to the filtration on $M$ we can assign a (co)symbol to $A$ by freezing the coefficients at a point $x\in M$, thereby obtaining a left invariant differential operator $\sigma_x(A)$ on the osculating group $\mathcal T_xM$.
We require that each $\sigma_x(A)$ satisfies the Rockland condition and furthermore $A$ is symmetric and non-negative on $L^2(E)$.
Examples of such operators include the sub-Laplacian for many classes of sub-Riemannian geometries, the Rumin--Seshadri operators  associated with (ungraded) BGG operators on parabolic geometries, and the square of the Connes--Moscovici transverse signature operator \cite{CM95}.
On trivially filtered manifolds, these operators are elliptic in the usual sense, and on contact or Heisenberg manifolds they are elliptic in the Heisenberg calculus of \cite{BG88, P08}.

\subsection{Heat kernel asymptotics}
In this paper we study the kernel of $e^{-tA}$ for $t>0$ and the complex powers $A^{-z}$ where $z\in \mathbb C$ for a differential operator $A$ described in the paragraph above.
Rather than using the standard approach by Seeley \cite{S67}, we follow Beals--Greiner--Stanton \cite{BGS84} who use a Volterra--Heisenberg calculus on $M\times\R$ to describe the heat kernel on CR manifolds, see also \cite[Chapter~5]{P08}.
In our general setting, however, no new calculus has to be developed.
Instead, we may regard $M\times\R$ as a filtered manifold such that the heat operator $A+\frac\partial{\partial t}$ becomes Rockland on $M\times\R$.
The general Rockland theorem established in \cite[Theorem~3.13]{DH17} thus yields a parametrix for the heat operator $A+\frac\partial{\partial t}$ in the pseudodifferential operator calculus developed by van~Erp and Yuncken in \cite{EY15v5}.
The asymptotic expansion of its Schwartz kernel along the diagonal immediately yields the heat kernel asymptotics for $A$.

Let us emphasize that this approach to the heat kernel asymptotics seems to only work for Rockland differential operators.
For more general pseudodifferential operators $P$ on the filtered manifold $M$, the corresponding heat operator $\frac{\partial}{\partial t}+P$ on $M\times \mathbb R$ is no longer pseudolocal in general and hence not in the Heisenberg pseudodifferential calculus.
This implies that we can only investigate ungraded Rockland sequences in the sense of \cite{DH17} and, in particular, the results can be applied only to ungraded BGG sequences. 
For example, the BGG sequences for Cartan geometries of generic rank 2 distributions in dimension five are always ungraded, and we will study them in greater detail here.

We mention here two prominent features of the heat asymptotics of Rockland differential operators.

As is well known in the classical (trivially filtered) case, certain terms in the heat kernel asymptotics of elliptic pseudodifferential operators vanish for differential operators. 
We show that this continues to hold for Rockland differential operators on filtered manifolds.
Their expansion takes a form similar to the expansion of classical elliptic differential operators.
In particular, there are no log terms, and every other polynomial term vanishes.
To see this, we introduce the class of projective operators, see Definition~\ref{D:PDO2} below, which have the desired asymptotic expansion and contain the parametrices of Rockland differential operators.

Another key feature of the heat asymptotics of a non-negative elliptic operator is the positivity of its leading term.
This term has geometric significance: for instance, the leading term in the heat trace expansion of a Laplace operator encodes the volume of the corresponding Riemannian metric.
Let us emphasize that this positivity of the leading term remains true for non-negative Rockland differential operators.
In particular, this permits to derive a Weyl law for the eigenvalue asymptotics using a Tauberian theorem.

\subsection{Applications}
The heat kernel asymptotics of an elliptic operator has many important consequences.
We  present here several analogous consequences for Rockland differential operators.  
This requires adapting the results in the context of the Heisenberg pseudodifferential calculus described above.

1. The structure of the complex powers $A^{-z}$ follows via Mellin transform from the heat asymptotics in the usual way.
Analogous to the celebrated result of Seeley \cite{S67}, the complex powers are pseudodifferential operators in the calculus of van Erp and Yuncken \cite{EY15v5}.
We interpret the notion of a holomorphic family of pseudodifferential operators \cite{P08}, in terms of an essential homogeneity criterion on the Heisenberg tangent groupoid. 
As can be expected, $A^{-z}$ is a holomorphic family for every non-negative Rockland differential operator $A$.

2. Taking the trace of a holomorphic family, whenever it is defined, produces a holomorphic function.
We will show that the various spectral zeta functions have meromorphic continuation with simple poles. 
In particular, the algebra of integer order pseudodifferential operators on a filtered manifold admits a non-commutative residue.

3. Weyl's law for the eigenvalue asymptotics also follows from the positivity of the constant term in the heat expansion, or from the position and residue at the first pole of the spectral zeta function as usual.

We will work out Weyl's law more explicitly for Cartan geometries in dimension five using suitable geometric choices.
Surprisingly, the constant in Weyl's law is universal, depending only on the irreducible representation defining the BGG sequence.

4. Rockland operators are hypoelliptic and hence on a closed manifold they are Fredholm. 
In particular we note two consequences towards the study of their index, namely the description of the K-homology class associated to them, and a generalization of the McKean--Singer formula.

\subsection{Structure of the paper}
The remaining part of the paper is organized as follows.
In Section~\ref{S:results} we formulate our main results: Theorem~\ref{T:heat} on the heat kernel asymptotics and Theorem~\ref{T:powers} on the structure of complex powers.
Furthermore, we derive several immediate consequences: Corollary~\ref{C:trace} on the heat trace expansion, Corollary~\ref{C:zeta} on the zeta function, Corollary~\ref{C:weyl} on Weyl's law, Corollary~\ref{C:McKeanSinger} on the McKean--Singer formula, as well as Corollary~\ref{C:KM} on the K-homology class.
In Section~\ref{S:calculus} we briefly recall some background for the calculus of pseudodifferential operators on filtered manifolds and introduce the class of projective pseudodifferential operators of integral Heisenberg order, see Definition~\ref{D:PDO2}.
In Sections~\ref{S:heat} and \ref{S:power} we present proofs of Theorems~\ref{T:heat} and \ref{T:powers}, respectively.
In Section~\ref{S:fam} we discuss holomorphic families of Heisenberg pseudodifferential operators.
In Section~\ref{S:res} we construct a non-commutative residue, see Corollary~\ref{cor:ncr}.
In Section~\ref{S:Weyl} we will work out Weyl's law more explicitly for Rumin--Seshadri operators, see Corollary~\ref{C:WeylRS}, and specialize further to BGG operators on 5-manifolds equipped with a rank two distribution of Cartan type, see Corollary~\ref{C:235}.

\subsection*{Acknowledgments}
We would like to thank an anonymous referee for helpful remarks and another anonymous referee for a thorough reading and many useful comments.
S.~D.\ was supported by the Austrian Science Fund (FWF): project number P30233.
The second author gratefully acknowledges the support of the Austrian Science Fund (FWF): project number Y963-N35, the START-Program of Michael Eichmair.

\section{Statement of the main results}\label{S:results}

Recall that a \emph{filtered manifold} \cite{M93,M02,N10,DH17} is a smooth manifold $M$ together with a filtration of the tangent bundle $TM$ by smooth subbundles,
$$
TM=T^{-m}M\supseteq\cdots\supseteq T^{-2}M\supseteq T^{-1}M\supseteq T^0M=0,
$$
which is compatible with the Lie bracket of vector fields in the following sense: If $X\in\Gamma^\infty(T^pM)$ and $Y\in\Gamma^\infty(T^qM)$ then $[X,Y]\in\Gamma^\infty(T^{p+q}M)$.
Putting $\mathfrak t^pM:=T^pM/T^{p+1}M$, the Lie bracket induces a vector bundle homomorphism $\mathfrak t^pM\otimes\mathfrak t^qM\to\mathfrak t^{p+q}M$ referred to as \emph{Levi bracket.}
This turns the associated graded vector bundle $\mathfrak tM:=\bigoplus_p\mathfrak t^pM$ into a bundle of graded nilpotent Lie algebras called the \emph{bundle of osculating algebras.}
The Lie algebra structure on the fiber $\mathfrak t_xM=\bigoplus_p\mathfrak t^p_xM$ depends smoothly on the base point $x\in M$, but is not assumed to be locally trivial.
In particular, the Lie algebras $\mathfrak t_xM$ might be non-isomorphic for different $x\in M$.

Using negative degrees, we are following a convention prevalent in parabolic geometry, see \cite{CS09,M02} for instance.
The other convention, where everything is concentrated in positive degrees, is the one that has been adopted in \cite{EY15v5,EY16}.

For real $\lambda\neq0$, we let $\dot\delta_\lambda\colon\mathfrak tM\to\mathfrak tM$ denote the grading automorphism given by multiplication with $\lambda^{-p}$ on the summand $\mathfrak t^pM$.
Note that $\dot\delta_\lambda$ restricts to a Lie algebra automorphism $\dot\delta_{\lambda,x}\in\Aut(\mathfrak t_xM)$ for each $x\in M$.
We will denote the \emph{homogeneous dimension} of $M$ by
\begin{equation}\label{E:homdim}
n:=-\sum_{p=1}^mp\cdot\rank(\mathfrak t^pM).
\end{equation}
Its fundamental importance stems from the fact that a $1$-density $\mu$ on the vector space $\mathfrak t_xM$ scales according to 
\begin{equation}\label{E:deltamu}
(\dot\delta_{\lambda,x})_*\mu=|\lambda|^{-n}\mu,\qquad\lambda\neq0.
\end{equation}

The filtration of the tangent bundle induces a \emph{Heisenberg filtration} on differential operators.
More explicitly, if $E$ and $F$ are two vector bundles over $M$, then a differential operator $\Gamma^\infty(E)\to\Gamma^\infty(F)$ is said to have \emph{Heisenberg order} at most $r$ if, locally, it can be written as a finite sum of operators of the form $\Phi\nabla_{X_k}\cdots\nabla_{X_1}$ where $\Phi\in\Gamma^\infty(\hom(E,F))$, $\nabla$ is a linear connection on $E$, and $X_j\in\Gamma^\infty(T^{p_j}M)$ such that $-(p_1+\cdots+p_k)\leq r$.
Denoting the space of differential operators of Heisenberg order at most $r$ by $\DO^r(E,F)$, we obtain a filtration 
\begin{equation}\label{E:DOfilt}
\Gamma^\infty(\hom(E,F))=\DO^0(E,F)\subseteq\DO^1(E,F)\subseteq\DO^2(E,F)\subseteq\cdots
\end{equation}
which is compatible with composition and taking the formal adjoint.
More precisely, if $A\in\DO^r(E,F)$ and $B\in\DO^s(F,G)$ where $G$ is another vector bundle, then $BA\in\DO^{s+r}(E,G)$ and $A^*\in\DO^r(F,E)$.
As usual, the formal adjoint is with respect to standard $L^2$ inner products of the form
\begin{equation}\label{E:L2}
\llangle\psi_1,\psi_2\rrangle:=\int_Mh(\psi_1(x),\psi_2(x))dx,
\end{equation}
where $\psi_1,\psi_2\in\Gamma_c^\infty(E)$. 
Here $dx$ is a volume density on $M$ and $h$ is a fiberwise Hermitian inner product on $E$.
The formal adjoint can be characterized by the equation $\llangle A^*\phi,\psi\rrangle_E=\llangle\phi,A\psi\rrangle_F$ for all $\psi\in\Gamma^\infty_c(E)$ and $\phi\in\Gamma^\infty_c(F)$.

A differential operator $A\in\DO^r(E,F)$ has a \emph{Heisenberg principal symbol}, $\sigma^r_x(A)\in\mathcal U_{-r}(\mathfrak t_xM)\otimes\hom(E_x,F_x)$ at every point $x\in M$.
Here 
$$
\mathcal U_{-r}(\mathfrak t_xM)=\bigl\{\mathbf X\in\mathcal U(\mathfrak t_xM)\bigm|\forall\lambda\neq0:\dot\delta_{\lambda,x}(\mathbf X)=\lambda^r\mathbf X\bigr\}
$$ 
denotes the degree $-r$ subspace in the universal enveloping algebra of the graded nilpotent Lie algebra $\mathfrak t_xM$.
The Heisenberg principal symbol is compatible with composition and taking the formal adjoint, that is, 
\begin{equation}\label{E:sAB}
\sigma^{s+r}_x(BA)=\sigma^s_x(B)\sigma^r_x(A)\qquad\text{and}\qquad\sigma^r_x(A^*)=\sigma^r_x(A)^*,
\end{equation}
for all $A\in\DO^r(E,F)$ and $B\in\DO^s(F,G)$.
Actually, the Heisenberg principal symbol provides a canonical short exact sequence,
$$
0\to\DO^{r-1}(E,F)\to\DO^r(E,F)\xrightarrow{\sigma^r}\Gamma^\infty\bigl(\mathcal U_{-r}(\mathfrak tM)\otimes\hom(E,F)\big)\to0.
$$
In particular, the Heisenberg principal symbol provides a canonical isomorphism between the associated graded of the filtered algebra $\DO(E)$ with the Heisenberg filtration \eqref{E:DOfilt}, and the graded algebra $\Gamma^\infty\bigl(\bigoplus_p\mathcal U_p(\mathfrak tM)\otimes\eend(E)\bigr)$.
More details may be found in \cite[Section~1.2.5]{N10}.

For $x\in M$ we let $\mathcal T_xM$ denote the \emph{osculating group} at $x$, that is, the simply connected nilpotent Lie group with Lie algebra $\mathfrak t_xM$.
The individual osculating groups can be put together to form a bundle of nilpotent Lie groups $\mathcal TM$ over $M$ such that the fiberwise exponential map, $\exp\colon\mathfrak tM\to\mathcal TM$, becomes a diffeomorphism of locally trivial bundles over $M$.
The scaling automorphisms $\dot\delta_{\lambda,x}\in\Aut(\mathfrak t_xM)$ integrate to Lie group automorphisms $\delta_{\lambda,x}\in\Aut(\mathcal T_xM)$ which combine to form bundle diffeomorphisms $\delta_\lambda\colon\mathcal TM\to\mathcal TM$ such that $\delta_\lambda\circ\exp=\exp\circ\dot\delta_\lambda$.
The Heisenberg principal symbol $\sigma^r_x(A)$ of an operator $A\in\DO^r(E,F)$ can be regarded as a left invariant differential operator on $\mathcal T_xM$ which is homogeneous of degree $r$.
More precisely, regarding $\sigma^r_x(A)\colon C^\infty(\mathcal T_xM,E_x)\to C^\infty(\mathcal T_xM,F_x)$, we have $\sigma^r_x(A)\circ l_g^*=l_g^*\circ\sigma^r_x(A)$ and $\sigma^r_x(A)\circ\delta_{\lambda,x}^*=\lambda^r\delta_{\lambda,x}^*\circ\sigma^r_x(A)$ for all $x\in M$, $\lambda\neq0$ and $g\in\mathcal T_xM$. 
Here $l_g^*$ denotes pull back along the left translation, $l_g\colon\mathcal T_xM\to\mathcal T_xM$, $l_g(h):=gh$, and $\delta_{\lambda,x}^*$ denotes pull back along $\delta_{\lambda,x}\colon\mathcal T_xM\to\mathcal T_xM$.

A differential operator $A\in\DO^r(E,F)$ is said to satisfy the \emph{Rockland condition} \cite{R78} at $x\in M$ if, for every non-trivial irreducible unitary representation of the osculating group, $\pi\colon\mathcal T_xM\to U(\mathcal H)$, on a Hilbert space $\mathcal H$, the linear operator $\pi(\sigma^r_x(A))\colon\mathcal H_\infty\otimes E_x\to\mathcal H_\infty\otimes F_x$ is injective.
Here $\mathcal H_\infty$ denotes the subspace of smooth vectors in $\mathcal H$.
An operator is called a \emph{Rockland operator} if it satisfies the Rockland condition at every point $x\in M$.
We refer to \cite[Section~2.3]{DH17} for more details and references.

According to \cite[Theorem~3.13]{DH17} every Rockland operator $A\in\DO^r(E,F)$ admits a properly supported left parametrix $B\in\Psi^{-r}_\prop(F,E)$ such that $BA-\id$ is a smoothing operator.
Here $\Psi^{-r}$ denotes the class of \emph{pseudodifferential operators of Heisenberg order $-r$} which has recently been introduced by van Erp and Yuncken, see \cite{EY15v5} and Section~\ref{S:calculus} below.
These are operators whose Schwartz kernels have a wave front set which is contained in the conormal of the diagonal.
In particular, their kernels are smooth away from the diagonal.
Moreover, their kernels admit an asymptotic expansion along the diagonal with respect to a tubular neighborhood which is adapted to the filtration.
In particular, the left parametrix $B$ induces a continuous operator $\Gamma^\infty(F)\to\Gamma^\infty(E)$ which extends continuously to a pseudolocal operator on distributional sections, $\Gamma^{-\infty}(F)\to\Gamma^{-\infty}(E)$.
Consequently, Rockland operators are \emph{hypoelliptic}, that is, if $\psi$ is a distributional section of $E$ such that $A\psi$ is smooth on an open subset $U$ of $M$, then $\psi$ was smooth on $U$, cf.\ \cite[Corollary~2.10]{DH17}.

For the remaining part of this section we will assume $M$ to be closed.
We consider a Rockland differential operator $A\in\DO^r(E)$ of Heisenberg order $r\geq1$ which is formally selfadjoint with respect to an $L^2$ inner product of the form \eqref{E:L2}, that is, for all $\psi_1,\psi_2\in\Gamma^\infty(E)$ we have
$$
\llangle A\psi_1,\psi_2\rrangle=\llangle\psi_1,A\psi_2\rrangle.
$$

\begin{lemma}\label{L:selfadj}
With the above hypotheses, $A$ is essentially selfadjoint with compact resolvent on $L^2(E)$.
\end{lemma}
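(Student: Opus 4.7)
The plan has two parts: first establish essential selfadjointness by exploiting hypoellipticity, then upgrade the bounded resolvent of the resulting selfadjoint extension to a compact one by inserting the Heisenberg parametrix.

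For essential selfadjointness I would view $A$ as an unbounded operator on $L^2(E)$ with initial domain $\Gamma^\infty(E)$. Formal selfadjointness ensures $A$ is closable; let $\bar A$ denote its closure. By the standard basic criterion it suffices to show $\ker(\bar A^*\mp i)=0$. If $\psi\in L^2(E)$ satisfies $\bar A^*\psi=\pm i\psi$, then duality with test sections translates this into the distributional equation $A\psi=\pm i\psi$. Hypoellipticity of the Rockland operator $A$, cf.\ \cite[Corollary~2.10]{DH17}, upgrades $\psi$ to a smooth section, whereupon formal selfadjointness gives
\[
\pm i\|\psi\|^2=\llangle A\psi,\psi\rrangle=\llangle\psi,A\psi\rrangle=\overline{\pm i\|\psi\|^2}=\mp i\|\psi\|^2,
\]
forcing $\psi=0$.

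For the compactness of the resolvent I would invoke the left parametrix $B\in\Psi^{-r}_\prop(E,E)$ furnished by \cite[Theorem~3.13]{DH17}, satisfying $BA=\id+R$ with $R$ smoothing. Given $\psi\in L^2(E)$, set $\phi:=(\bar A-i)^{-1}\psi$, so that $A\phi=\psi+i\phi$ distributionally. Applying $B$ and rearranging yields
\[
(\bar A-i)^{-1}=B+(iB-R)(\bar A-i)^{-1}
\]
as bounded operators on $L^2(E)$. Since $M$ is closed and $r\geq1$, both $B$ (a Heisenberg pseudodifferential operator of strictly negative order) and $R$ (smoothing) are compact on $L^2(E)$, so $(\bar A-i)^{-1}$ is compact as the sum of a compact operator and the product of a compact with a bounded one.

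The only substantive ingredient beyond functional analytic boilerplate and the cited Rockland parametrix is the $L^2$-compactness of Heisenberg pseudodifferential operators of negative order on a closed filtered manifold. This is the step that genuinely requires the new calculus and is expected to follow from a Rellich-type compact embedding of the appropriate Heisenberg Sobolev spaces, which will be recalled in Section~\ref{S:calculus}.
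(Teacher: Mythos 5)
Your proof is correct, but the essential selfadjointness step takes a genuinely different route from the paper. You use the basic (deficiency index) criterion: solutions of $\bar A^*\psi=\pm i\psi$ satisfy $(A\mp i)\psi=0$ distributionally, hypoellipticity makes them smooth, and symmetry kills them. One cosmetic point: since $A\psi=\pm i\psi$ is a priori only in $L^2$, you should apply hypoellipticity to the operator $A\mp i$ (which is again Rockland, having the same Heisenberg principal symbol as $A$), or bootstrap through the Heisenberg Sobolev scale; as written, the cited hypoellipticity statement for $A$ alone ("$A\psi$ smooth implies $\psi$ smooth") does not literally apply. The paper instead shows directly that the domain of the closure equals the domain of the adjoint: by the regularity theorem the latter is the Heisenberg Sobolev space $H^r(E)$, and smooth sections are shown to be dense in the graph norm by writing $\phi=\Lambda'\Lambda\phi-R\phi$ for a parametrix pair $\Lambda\in\Psi^r(E)$, $\Lambda'\in\Psi^{-r}(E)$ and approximating $\Lambda\phi$ in $L^2$. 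Your route is more elementary in that it avoids the Sobolev-space identification of $\operatorname{dom}(A^*)$, needing only hypoellipticity; the paper's route yields the extra information that the selfadjoint extension has domain $H^r(E)$. For compactness of the resolvent the two arguments are essentially identical: both reduce to the $L^2$-compactness of negative-order Heisenberg pseudodifferential operators on a closed manifold, which is exactly \cite[Proposition~3.9(b)]{DH17} (the paper applies it to a parametrix of $A-z$; you apply it to $B$ and $R$ in the identity $(\bar A-i)^{-1}=B+(iB-R)(\bar A-i)^{-1}$, which is a valid rearrangement once one checks, by approximation from $\Gamma^\infty(E)$, that $B\bar A\phi=\phi+R\phi$ for $\phi$ in the domain of the closure).
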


\begin{proof}
Indeed, $A$ is symmetric with dense domain $\Gamma^\infty(E)$ and thus closeable.
By regularity, the domain of its adjoint coincides with the Heisenberg Sobolev space $H^{r}(E)$, see \cite[Corollary~3.24]{DH17}.
To see that this coincides with the domain of the closure, let $\Lambda\in\Psi^r(E)$ and $\Lambda'\in\Psi^{-r}(E)$ such that $R=\Lambda'\Lambda-\id$ is a smoothing operator.\footnote{We could use $\Lambda=A$ and a left parametrix $\Lambda'=B$ as above. Alternatively, we may assume $R=0$, see \cite[Lemma~3.16]{DH17}.}
Given $\phi\in H^r(E)$, choose $\phi_j\in\Gamma^\infty(E)$ such that $\phi_j\to\Lambda\phi$ in $L^2(E)$.
Since $\Lambda'$ and $A\Lambda'$ are both bounded on $L^2(E)$, see \cite[Proposition~3.9(a)]{DH17}, we also have $\Lambda'\phi_j\to\Lambda'\Lambda\phi$ and $A\Lambda'\phi_j\to A\Lambda'\Lambda\phi$ in $L^2(E)$.
Putting $\psi_j:=\Lambda'\phi_j-R\phi\in\Gamma^\infty(E)$, we obtain $\psi_j\to\phi$ and $A\psi_j\to A\phi$ in $L^2(E)$, hence $\phi$ is in the domain of the closure of $A$.
The resolvent of $A$ is compact, since $A-z$ has a parametrix in $\Psi^{-r}(E)$ for every $z\in\C$, and these operators are compact on $L^2(E)$, see \cite[Proposition~3.9(b)]{DH17}.
\end{proof}

Assuming, moreover, that $A$ is non-negative, that is, 
$$
\llangle\psi,A\psi\rrangle\geq0
$$
for all $\psi\in\Gamma^\infty(E)$, the spectral theorem \cite[Section~VI\S 5.3]{K95} permits to construct a \emph{strongly differentiable semigroup} $e^{-tA}$ for $t\geq0$.
More precisely, for each $\psi\in L^2(E)$ the vector $e^{-tA}\psi$ is contained in the domain of $A$, and we have 
\begin{equation}\label{E:ddte-tA}
\tfrac\partial{\partial t}e^{-tA}\psi=-Ae^{-tA}\psi\qquad\text{as well as}\qquad\lim_{t\searrow0}e^{-tA}\psi=\psi.
\end{equation}
Since $A$ is non-negative, each $e^{-tA}$ is a contraction on $L^2(E)$.
According to the Schwartz kernel theorem, it thus has a distributional kernel, $k_t\in\Gamma^{-\infty}(E\boxtimes E')$.
More explicitly, we have
\begin{equation}\label{E:kt}
\langle\phi,e^{-tA}\psi\rangle
=\int_{(x,y)\in M\times M}\phi(x)k_t(x,y)\psi(y)
\end{equation}
for all $t\geq0$, $\psi\in\Gamma^\infty(E)$, and $\phi\in\mathcal D(E)=\Gamma^\infty(E')$.
Here $E':=E^*\otimes|\Lambda_M|$ where $|\Lambda_M|$ denotes the bundle of $1$-densities on $M$ and $\langle-,-\rangle$ denotes the canonical pairing between $\mathcal D(E)$ and $\Gamma^{-\infty}(E)=\mathcal D'(E)$.
This pairing will also be denote by $\langle\phi,\xi\rangle=\int_M\phi\,\xi=\int_{x\in M}\phi(x)\xi(x)$, where $\phi\in\mathcal D(E)$ and $\xi\in\Gamma^{-\infty}(E)$.
In particular, the right hand side in \eqref{E:kt} denotes the canonical pairing of $\phi\boxtimes\psi\in\Gamma^\infty(E'\boxtimes E)=\mathcal D(E\boxtimes E')$ with $k_t\in\Gamma^{-\infty}(E\boxtimes E')=\mathcal D'(E\boxtimes E')$.

One main aim of this paper is to establish the following heat kernel asymptotics, generalizing a result of Beals--Greiner--Stanton for CR manifolds \cite[Theorems~5.6 and 4.5]{BGS84}, see also \cite[Theorem~5.1.26 and Proposition~5.1.15]{P08}.

\begin{theorem}[Heat kernel asymptotics]\label{T:heat}
Let $E$ be a vector bundle over a closed filtered manifold $M$.
Suppose $A\in\DO^r(E)$ is a Rockland differential operator of even \footnote{Note that there are no non-trivial formally selfadjoint and non-negative differential operators of odd Heisenberg order.} Heisenberg order $r>0$ which is formally selfadjoint and non-negative with respect to an $L^2$ inner product of the form \eqref{E:L2}, that is, $\llangle A\psi_1,\psi_2\rrangle=\llangle\psi_1,A\psi_2\rrangle$ and $\llangle A\psi,\psi\rrangle\geq0$, for all $\psi,\psi_1,\psi_2\in\Gamma^\infty(E)$.
Then $e^{-tA}$ is a smoothing operator for each $t>0$, and the corresponding heat kernels $k_t\in\Gamma^\infty(E\boxtimes E')$ depend smoothly on $t>0$.
Moreover, as $t\searrow0$, we have an asymptotic expansion
$$
k_t(x,x)\sim\sum_{j=0}^\infty t^{(j-n)/r}q_j(x),
$$
where $q_j\in\Gamma^\infty\bigl(\eend(E)\otimes|\Lambda_M|\bigr)$.
More precisely, for every integer $N$ we have $k_t(x,x)=\sum_{j=0}^{N-1}t^{(j-n)/r}q_j(x)+O(t^{(N-n)/r})$, uniformly in $x$ as $t\searrow0$.
Moreover, $q_j(x)=0$ for all odd $j$, and $q_0(x)>0$ in $\eend(E_x)\otimes|\Lambda_{M,x}|$ for each $x\in M$.
\end{theorem}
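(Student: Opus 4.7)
The plan is to adapt the Beals--Greiner--Stanton strategy to the present filtered setting: realize $A+\partial_t$ as a Rockland operator on the filtered manifold $N:=M\times\R$ and extract the heat kernel asymptotics from a Heisenberg pseudodifferential parametrix produced by \cite[Theorem~3.13]{DH17}.

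First I would filter $N$ so that $\partial_t$ has Heisenberg order $r$, by setting $T^pN:=T^pM$ for $p>-r$ and $T^pN:=T^pM\oplus T\R$ for $p\leq-r$. Then the osculating algebra at $(x,t)$ is $\mathfrak t_xM\oplus\R$ (with $\partial_t$ spanning the degree-$(-r)$ component) and the homogeneous dimension of $N$ is $n+r$. The heat operator $P:=A+\partial_t$ lies in $\DO^r(E)$ on $N$, with cosymbol $\sigma^r(P)=\sigma^r(A)+\partial_t$. To verify the Rockland condition for $P$, observe that every non-trivial irreducible unitary representation of $\mathcal T_xM\times\R$ has the form $\pi\otimes\chi_\xi$ with $\chi_\xi(t)=e^{i\xi t}$, so the symbol acts on $\mathcal H_\infty\otimes E_x$ as $\pi(\sigma^r_x(A))+i\xi\cdot\id$. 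By the hypotheses on $A$, $\pi(\sigma^r_x(A))$ is non-negative and essentially selfadjoint; hence this sum is injective, either by the Rockland property of $A$ (when $\xi=0$, which forces $\pi|_{\mathcal T_xM}\neq1$) or because $i\xi$ is a non-zero purely imaginary scalar (when $\xi\neq0$). Applying \cite[Theorem~3.13]{DH17} yields a properly supported parametrix $Q\in\Psi^{-r}_\prop(E)$ on $N$ with $PQ-\id$ and $QP-\id$ smoothing.

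Next I would match $Q$, up to a smoothing operator, with the heat fundamental solution. Writing $\theta$ for the Heaviside function, the distribution $G(x,t;y,s):=\theta(t-s)k_{t-s}(x,y)$ on $N\times N$ satisfies $PG=\delta_{\Delta_N}$ distributionally, since the jump of $\theta$ combined with the initial condition $k_0=\id$ and the semigroup identity \eqref{E:ddte-tA} produces precisely the diagonal delta. Both $Q$ and $G$ are thus right inverses of $P$ modulo smoothing, so $Q-G$ is annihilated by $P$ modulo smoothing and hence smooth by hypoellipticity. Consequently the Schwartz kernel singularities of $G$ are captured by the Heisenberg expansion of $k_Q$. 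In exponential coordinates around $(x_0,0)$ adapted to the filtration of $N$, $k_Q$ admits an asymptotic expansion $\sum_{j\geq0}K_j$ in which $K_j$ is essentially homogeneous of degree $j-n$ under the zoom action, so restriction to the spatial diagonal $Y=0$ at positive time $\tau=t>0$ forces $K_j(0,t)=t^{(j-n)/r}q_j(x_0)$ for unique $q_j(x_0)\in\eend(E_{x_0})\otimes|\Lambda_{M,x_0}|$. The uniform remainder bounds of the Heisenberg calculus yield the asymptotic expansion, and smoothness of $k_t$ for $t>0$ follows from the fact that $Q$ is smooth off the diagonal of $N\times N$.

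The parity statement $q_j=0$ for odd $j$ is where the class of projective Heisenberg pseudodifferential operators of Definition~\ref{D:PDO2} enters: the parametrix $Q$ is projective because $P$ is a differential operator and the formal Neumann inversion preserves projectivity, and the defining feature of projective operators is that odd-degree terms in their on-diagonal kernel expansion vanish. Positivity $q_0(x)>0$ follows by identifying $q_0(x)$ with the value at the group identity of the fiberwise heat kernel of the non-negative Rockland operator $\sigma^r_x(A)$ on $\mathcal T_xM$ at time $1$, where the standard positivity of heat semigroups associated with non-negative selfadjoint Rockland operators on simply connected nilpotent groups applies. The main obstacle is the parity statement, which hinges on the class of projective operators and on verifying that the parametrix of \cite{DH17} for a Rockland \emph{differential} operator actually stays inside this projective subclass; once this structural fact is established in Section~\ref{S:calculus}, the vanishing of odd-degree on-diagonal terms is built into the definition, and the rest of the argument proceeds smoothly.
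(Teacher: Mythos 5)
Your overall strategy is exactly the paper's: filter $M\times\R$ so that $\partial_t$ has Heisenberg order $r$, verify that $A+\partial_t$ is Rockland, take a parametrix from \cite[Theorem~3.13]{DH17}, identify it modulo smoothing with the Volterra kernel $\theta(t-s)k_{t-s}(x,y)$, read off the diagonal expansion from homogeneity, use projectivity for the parity statement, and identify $q_0(x)$ with the time-one value of the symbol heat kernel at the group identity. The Rockland verification via characters of the central $\R$ factor is a legitimate rephrasing of Lemma~\ref{L:rockland} (though you do not need, and have not justified, non-negativity of $\pi(\sigma^r_x(A))$; symmetry alone suffices, as taking imaginary parts of $\langle(a+i\xi)v,v\rangle=0$ shows).

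There is, however, one genuine gap: the possible presence of logarithmic terms. The expansion terms $K_j$ of the parametrix are only \emph{essentially} homogeneous, so their canonical representatives satisfy $(\delta_\lambda)_*q=\lambda^{-r-j}q+\lambda^{-r-j}\log|\lambda|\,p$ with $p$ a polynomial coefficient (Lemma~\ref{L:reps}); whenever $j\geq n$ this coefficient need not vanish a priori, and restriction to the diagonal at positive time would then produce terms of the form $t^{(j-n)/r}\log t$. Your assertion that homogeneity ``forces $K_j(0,t)=t^{(j-n)/r}q_j(x_0)$'' silently assumes strict homogeneity. The paper closes this by exploiting precisely the Heaviside factor you introduced: since the kernel vanishes for $t<s$, each homogeneous term vanishes on the scaling-invariant half-space $\mathcal T_xM\times(-\infty,0)$, hence so does the polynomial $p$, which therefore vanishes identically. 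This step is also needed for your parity argument: projectivity by itself only gives the $\Z_2$ symmetry $(\delta_{-1})_*k_j=(-1)^{r-j}k_j$ of the expansion terms (not, as you state, the vanishing of odd on-diagonal terms by definition); one then evaluates at $(o_x,t)$ using strict homogeneity at $\lambda=-1$ and evenness of $r$ to conclude $q_j=0$ for odd $j$. Finally, for $q_0(x)>0$ you should not merely cite ``standard positivity'': non-negativity follows from the semigroup and selfadjointness identities $k_1(o_x)=\int_{\mathcal T_xM}k_{1/2}(g)^*k_{1/2}(g)$, and strict positivity requires the additional normalization $\int_{\mathcal T_xM}k_{1/2}=\id_{E_x}$, both of which must be extracted from the parametrix as in Remark~\ref{R:symbolheat}.
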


The terms $q_j\in\Gamma^\infty\bigl(\eend(E)\otimes|\Lambda_M|\bigr)$ in Theorem~\ref{T:heat} are (in principle) locally computable, they can be read off any parametrix for the heat operator $A+\frac\partial{\partial t}$, see Remark~\ref{R:loccomp} below for more details.
The leading term $q_0(x)$ can also be obtained by evaluating the heat kernel of the Heisenberg principle symbol $\sigma_x^r(A)$ at the point $(o_x,1)\in\mathcal T_xM\times(0,\infty)$, see \eqref{E:q0x} below.
Here $o_x\in\mathcal T_xM$ denotes the neutral element of the osculating group.

The spectral theorem also permits to construct complex powers $A^z$ for every $z\in\C$.
These are unbounded operators on $L^2(E)$ satisfying $A^{z_1+z_2}=A^{z_1}A^{z_2}$ for all $z_1,z_2\in\C$.
The powers are defined such that $A^z$ vanishes on $\ker(A)$ and commutes with the orthogonal projection onto $\ker(A)$.
In particular, $A^0$ is the orthogonal projection onto the orthogonal complement of $\ker(A)$, and $A^{-1}$ is the pseudoinverse of $A$.
If $z\in\N$, then $A^z$ coincides with the ordinary power, i.e., the $z$-fold product of $A$ with itself.

Another main goal of this paper is the following result about the structure of complex powers generalizing \cite[Theorems~5.3.1 and 5.3.4]{P08}.

\begin{theorem}[Complex powers]\label{T:powers}
In the situation of Theorem~\ref{T:heat}, the complex power $A^{-z}$ is a pseudodifferential operator of Heisenberg order $-zr$ for every $z\in\C$, and these powers constitute a holomorphic family of pseudodifferential operators, see Section~\ref{S:fam} below.
In particular, the kernel $k_{A^{-z}}(x,y)$ is smooth on $\{x\neq y\}\times\C$ and depends holomorphically on the variable $z\in\C$.
If $\Re(z)>n/r$, then $A^{-z}$ has a continuous kernel and its restriction to the diagonal, $k_{A^{-z}}(x,x)$, provides a holomorphic family in $\Gamma^\infty\bigl(\eend(E)\otimes|\Lambda_M|\bigr)$ for $\Re(z)>n/r$.
This family can be extended meromorphically to the entire complex plane with at most simple poles located at the arithmetic progression $(n-j)/r$ where $j\in\N_0$.
If $(n-j)/r\notin-\N_0$, then the residue of $k_{A^{-z}}(x,x)$ at $(n-j)/r$ can be expressed as
\begin{equation}\label{E:powers.res}
\res_{z=(n-j)/r}\bigl(k_{A^{-z}}(x,x)\bigr)=\frac{q_j(x)}{\Gamma((n-j)/r)},\qquad j\in\N_0,
\end{equation}
where $q_j\in\Gamma^\infty\bigl(\eend(E)\otimes|\Lambda_M|\bigr)$ are as in Theorem~\ref{T:heat}.
Moreover, $k_{A^{-z}}(x,x)$ is holomorphic at the points in $-\N_0$, taking the values
$$
k_{A^l}(x,x)=(-1)^l\,l!\,q'_{n+rl}(x),\qquad l\in\N_0.
$$
Here $q_j'(x):=q_j(x)$ for $j\neq n$ and $q_n'(x):=q_n(x)-p(x,x)$, where $p\in\Gamma^\infty(E\boxtimes E')$ denotes the Schwartz kernel of the orthogonal projection onto $\ker(A)$.
\end{theorem}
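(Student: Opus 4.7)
The plan is to express $A^{-z}$ through the Mellin transform of the heat semigroup and then read off both the pseudodifferential structure and the meromorphic behavior from Theorem~\ref{T:heat}. Writing $P$ for the orthogonal projection onto $\ker(A)$, the spectral calculus gives, for $\Re(z)>0$,
$$
A^{-z}(I-P)=\frac1{\Gamma(z)}\int_0^\infty t^{z-1}\bigl(e^{-tA}-P\bigr)\,dt,
$$
and by construction $A^{-z}P=0$. I would split the integral at $t=1$: the tail $\int_1^\infty t^{z-1}(e^{-tA}-P)\,dt$ is entire in $z$ and produces a smoothing, holomorphically varying operator, because $e^{-tA}-P$ is smoothing with exponential decay in the operator norm thanks to the spectral gap above $0$. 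Only the short-time part $\int_0^1 t^{z-1}(e^{-tA}-P)\,dt$ needs a delicate analysis.

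For the pseudodifferential statement I would regard $M\times\R$ as a filtered manifold in which $\partial/\partial t$ carries Heisenberg order $r$, so that the heat operator $A+\partial_t$ becomes Rockland on $M\times\R$ (this is exactly the device used in the proof of Theorem~\ref{T:heat}). Applying \cite[Theorem~3.13]{DH17} yields a properly supported parametrix for $A+\partial_t$ in $\Psi^{-r}_\prop$. Feeding its Schwartz kernel, restricted to $\{t>0\}$, into the Mellin integral $\frac{1}{\Gamma(z)}\int_0^1 t^{z-1}(\cdot)\,dt$ yields a family of kernels on $M$. The essential homogeneity of the parametrix kernel on the Heisenberg tangent groupoid of $M\times\R$, combined with the change of variables $t\mapsto\lambda^r t$, is what transforms a homogeneity degree $-r$ on $M\times\R$ into the degree $-rz$ required for $A^{-z}$ to lie in the Heisenberg calculus on $M$; holomorphic dependence in $z$ is inherited from $t^{z-1}$ after verifying that the essentially homogeneous extension to $\mathbb TM$ can be chosen holomorphically and that the resulting smoothing error is holomorphic in operator-norm. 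The holomorphic family condition from Section~\ref{S:fam} is then checked directly on this construction.

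The meromorphic structure and the residue formulas follow from substituting the pointwise expansion $k_t(x,x)\sim\sum_{j\geq 0}t^{(j-n)/r}q_j(x)$ into $\int_0^1 t^{z-1}k_t(x,x)\,dt$. For each $N$, the uniform remainder estimate in Theorem~\ref{T:heat} gives
$$
\int_0^1 t^{z-1}k_t(x,x)\,dt=\sum_{j=0}^{N-1}\frac{q_j(x)}{z+(j-n)/r}+H_N(x,z),
$$
with $H_N$ holomorphic in $\{\Re(z)>(n-N)/r\}$; letting $N\to\infty$ extends the integral meromorphically to $\C$ with simple poles at $z=(n-j)/r$ and residues $q_j(x)$. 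The extra term $-\int_0^1 t^{z-1}p(x,x)\,dt=-p(x,x)/z$ from subtracting $P$ adjusts the residue at $z=0$ to $q_n(x)-p(x,x)=q_n'(x)$. Dividing by $\Gamma(z)$, whose simple poles at $-\N_0$ cancel the remaining singularities of the numerator there, produces \eqref{E:powers.res} as well as holomorphy at $z\in-\N_0$; evaluating at $z=-l$ and using $\res_{z=-l}\Gamma(z)=(-1)^l/l!$ gives $k_{A^l}(x,x)=(-1)^l\,l!\,q'_{n+rl}(x)$, which is consistent with $A^l$ being a differential operator (its on-diagonal kernel being determined by the local jet data encoded in the $q_j$).

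The main obstacle I anticipate is not the Mellin and residue bookkeeping, which is a clean one-variable calculation once Theorem~\ref{T:heat} is available, but rather establishing that $\{A^{-z}\}$ is genuinely a \emph{holomorphic family} in the sense of Section~\ref{S:fam}, i.e.\ producing for each $z$ an essentially homogeneous kernel on $\mathbb TM$ whose expansion at $\mathcal TM$ depends holomorphically on $z$ and whose smoothing remainder varies holomorphically in the appropriate Fr\'echet topology. My strategy for this would be to complement the Mellin approach by a Hankel contour integral $A^{-z}=\tfrac{1}{2\pi\ii}\int_\gamma\lambda^{-z}(A-\lambda)^{-1}d\lambda$, built from a parametrix of $A-\lambda$ within the calculus that depends holomorphically on the spectral parameter $\lambda$. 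Holomorphy in $z$ then follows automatically under the contour, and matching the two constructions modulo smoothing operators reconciles the pseudodifferential structure with the residue formulas derived from the heat expansion.
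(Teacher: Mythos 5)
Your proposal is correct and follows essentially the same route as the paper: the Mellin transform of $e^{-tA}-P$ split at $t=1$, the large-time spectral-gap estimate for the tail, the heat parametrix on the filtered manifold $M\times\R$ whose homogeneity of degree $-r-j$ turns into essential homogeneity of degree $-rz-j$ after integration against $t^{z-1}$, and the residue bookkeeping against $1/\Gamma(z)$. The Hankel-contour fallback you anticipate needing is unnecessary: the paper establishes the holomorphic-family property directly from the same Mellin construction (Lemma~\ref{L:Azhf}), since the terms $K_{z,j}$ of \eqref{E:defKsj} are manifestly essentially homogeneous families in the sense of Definition~\ref{D:ehO} and the $C^N$ remainders depend holomorphically on $z$, which is exactly the characterization of holomorphic families given in Lemma~\ref{L:hf}\itemref{L:hf:b}.
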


Before turning to the proof of Theorems~\ref{T:heat} and \ref{T:powers}, we will now formulate several immediate corollaries.
The next one generalizes \cite[Theorem~5.6]{BGS84}, see also \cite[Proposition~6.1.1]{P08}.

\begin{corollary}[Heat trace asymptotics]\label{C:trace}
In the situation of Theorem~\ref{T:heat}, the heat trace admits an asymptotic expansion as $t\searrow0$,
$$
\tr\bigl(e^{-tA}\bigr)\sim\sum_{j=0}^\infty t^{(j-n)/r}a_j.
$$
More precisely, for each integer $N$ we have $\tr(e^{-tA})=\sum_{j=0}^{N-1}t^{(j-n)/r}a_j+O(t^{(N-n)/r})$ as $t\searrow0$.
Moreover, $a_j=\int_M\tr_E(q_j)$ where $q_j\in\Gamma^\infty\bigl(\eend(E)\otimes|\Lambda_M|\bigr)$ is as in Theorem~\ref{T:heat}, $a_j=0$ for all odd $j$, and $a_0>0$.
\end{corollary}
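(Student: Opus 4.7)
The plan is to deduce this corollary directly from Theorem~\ref{T:heat} by integrating the pointwise diagonal asymptotics and taking the fiberwise trace. The first step is to observe that, since $M$ is closed and $k_t\in\Gamma^\infty(E\boxtimes E')$ is smooth for every $t>0$ by Theorem~\ref{T:heat}, the operator $e^{-tA}$ is a smoothing operator on a compact manifold and hence trace class on $L^2(E)$. Standard functional analysis then identifies the $L^2$ trace with the integral of the fiberwise trace of the kernel along the diagonal, that is,
$$
\tr\bigl(e^{-tA}\bigr)=\int_M\tr_E\bigl(k_t(x,x)\bigr),
$$
where $\tr_E\colon\eend(E)\otimes|\Lambda_M|\to|\Lambda_M|$ denotes the fiberwise trace, naturally producing a $1$-density on $M$.

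Next I would set $a_j:=\int_M\tr_E(q_j)$ for each $j\geq0$, where $q_j\in\Gamma^\infty(\eend(E)\otimes|\Lambda_M|)$ are the coefficients from Theorem~\ref{T:heat}. The key point is that Theorem~\ref{T:heat} supplies a \emph{uniform} remainder estimate along the diagonal: for every integer $N$ there exists a constant $C_N$ with $|k_t(x,x)-\sum_{j=0}^{N-1}t^{(j-n)/r}q_j(x)|\leq C_Nt^{(N-n)/r}$ uniformly in $x\in M$ as $t\searrow0$. Applying $\tr_E$ (which is continuous and fiberwise linear) and integrating the resulting estimate over the compact manifold $M$ yields
$$
\tr\bigl(e^{-tA}\bigr)=\sum_{j=0}^{N-1}t^{(j-n)/r}a_j+O\bigl(t^{(N-n)/r}\bigr),
$$
for every $N\in\N$, which is exactly the claimed asymptotic expansion.

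It remains to address the two additional properties. The vanishing $a_j=0$ for odd $j$ is immediate from $q_j=0$ for odd $j$, as supplied by Theorem~\ref{T:heat}. For the positivity $a_0>0$, Theorem~\ref{T:heat} gives that $q_0(x)$ is positive in $\eend(E_x)\otimes|\Lambda_{M,x}|$ for every $x\in M$, meaning it is of the form $P(x)\otimes\mu(x)$ with $P(x)\in\eend(E_x)$ positive definite and $\mu(x)\in|\Lambda_{M,x}|$ a strictly positive density; consequently $\tr_E(q_0(x))=\tr(P(x))\mu(x)$ is a strictly positive density on $M$. Integrating over the nonempty compact manifold $M$ yields $a_0>0$.

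I do not anticipate any serious obstacle; the only points requiring a little care are the smoothing/trace-class argument and the unpacking of the pointwise positivity statement in $\eend(E)\otimes|\Lambda_M|$ to conclude that $\tr_E(q_0)$ is a positive density rather than merely non-negative. Both are routine once the stronger, uniform form of Theorem~\ref{T:heat} is in hand.
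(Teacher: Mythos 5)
Your proof is correct and follows the same route as the paper: express the trace as the integral of the fiberwise trace of the diagonal kernel, integrate the uniform pointwise expansion from Theorem~\ref{T:heat}, and read off the vanishing and positivity of the coefficients. You spell out the uniformity of the remainder and the positivity of $\tr_E(q_0)$ in more detail than the paper, but the substance is identical.
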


\begin{proof}
Since $e^{-tA}$ is a smoothing operator, its trace can be expressed as
$$
\tr\bigl(e^{-tA}\bigr)=\int_{x\in M}\tr_E(k_t(x,x)),\qquad t>0.
$$
The asymptotic expansion of $\tr(e^{-tA})$ thus follows from the asymptotic expansion for $k_t(x,x)$ in Theorem~\ref{T:heat}.
Clearly, $a_0>0$ since $q_0(x)>0$ at each $x\in M$.
\end{proof}

\begin{corollary}[Zeta function]\label{C:zeta}
In the situation of Theorem~\ref{T:heat}, the complex power $A^{-z}$ is trace class for $\Re(z)>n/r$, and $\zeta(z):=\tr(A^{-z})$ is a holomorphic function on $\{\Re(z)>n/r\}$.
This zeta function can be extended to a meromorphic function on the entire complex plane with at most simple poles located at the arithmetic progression $(n-j)/r$ where $j\in\N_0$.
If $(n-j)/r\notin-\N_0$, then the residue of $\zeta(z)$ at $(n-j)/r$ can be expressed as
$$
\res_{z=(n-j)/r}\bigl(\zeta(z)\bigr)=\frac{a_j}{\Gamma((n-j)/r)},\qquad j\in\N_0,
$$
where $a_j$ are the constants from Corollary~\ref{C:trace}.
Moreover, $\zeta(z)$ is holomorphic at the points in $-\N_0$, taking the values
$$
\zeta(-l)=(-1)^l\,l!\,a_{n+rl}',\qquad l\in\N_0.
$$
Here $a_j':=a_j$ for $j\neq n$ and $a_n':=a_n-\dim\ker(A)$.
\end{corollary}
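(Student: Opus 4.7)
The plan is to derive everything from Corollary~\ref{C:trace} via the Mellin transform, in parallel with the classical elliptic case. Let $P$ denote the orthogonal projection onto $\ker(A)$, which is finite dimensional since $A$ has compact resolvent by Lemma~\ref{L:selfadj}, and let $\lambda_1>0$ be the smallest non-zero eigenvalue of $A$. The spectral theorem together with the identity $\lambda^{-z}=\Gamma(z)^{-1}\int_0^\infty t^{z-1}e^{-t\lambda}\,dt$ for $\lambda>0$ and $\Re(z)>0$, combined with the fact that $A^{-z}$ vanishes on $\ker(A)$ by definition, yields
$$A^{-z}=\frac{1}{\Gamma(z)}\int_0^\infty t^{z-1}\bigl(e^{-tA}-P\bigr)\,dt$$
as an identity of bounded operators on $L^2(E)$ for $\Re(z)>0$.

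Corollary~\ref{C:trace} gives $\tr\bigl(e^{-tA}\bigr)=O(t^{-n/r})$ as $t\searrow 0$, while the spectral gap yields $\tr\bigl(e^{-tA}\bigr)-\dim\ker(A)=O(e^{-t\lambda_1})$ as $t\to\infty$. Consequently, for $\Re(z)>n/r$ the integral above converges in the trace norm, so $A^{-z}$ is trace class; interchanging trace and integral (justified by absolute convergence) gives
$$\Gamma(z)\,\zeta(z)=\int_0^\infty t^{z-1}\bigl(\tr\bigl(e^{-tA}\bigr)-\dim\ker(A)\bigr)\,dt,\qquad\Re(z)>n/r.$$
Holomorphy of $\zeta$ on $\{\Re(z)>n/r\}$ then follows by differentiating under the integral sign.

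To extend $\zeta$ meromorphically, I would split the integral at $t=1$. The tail $\int_1^\infty$ defines an entire function by the exponential decay. For the head, fix $N\in\N$, use Corollary~\ref{C:trace} to expand $\tr\bigl(e^{-tA}\bigr)$ to order $N$ on $(0,1]$, and integrate the polynomial terms exactly to obtain
$$\int_0^1 t^{z-1}\bigl(\tr\bigl(e^{-tA}\bigr)-\dim\ker(A)\bigr)\,dt=\sum_{j=0}^{N-1}\frac{a_j}{z-(n-j)/r}-\frac{\dim\ker(A)}{z}+H_N(z),$$
where $H_N$ is holomorphic on $\{\Re(z)>(n-N)/r\}$. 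The $-\dim\ker(A)/z$ term combines with the $j=n$ summand (whenever $N>n$) to produce a single simple pole at $z=0$ with residue $a_n-\dim\ker(A)=a_n'$. Letting $N\to\infty$ exhibits $\Gamma(z)\zeta(z)$ as meromorphic on $\C$ with simple poles at exactly the points $(n-j)/r$, $j\in\N_0$, of residue $a_j'$.

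Dividing by $\Gamma(z)$ yields the claimed structure of $\zeta$. At a candidate pole $(n-j)/r\notin-\N_0$, $\Gamma$ is holomorphic and non-zero, so $\zeta$ inherits a simple pole with residue $a_j'/\Gamma((n-j)/r)$. At $(n-j)/r=-l$ with $l\in\N_0$, that is $j=n+rl$, the simple pole of $\Gamma$ at $z=-l$ with residue $(-1)^l/l!$ cancels the pole of $\Gamma(z)\zeta(z)$, so $\zeta$ is holomorphic there with $\zeta(-l)=a_{n+rl}'\big/\bigl((-1)^l/l!\bigr)=(-1)^l\,l!\,a_{n+rl}'$. The main obstacle, as in the classical case, is establishing the trace-norm convergence in the Mellin identity; this relies simultaneously on the heat trace asymptotic from Corollary~\ref{C:trace} at $t\searrow 0$ and on the spectral gap from Lemma~\ref{L:selfadj} at $t\to\infty$, and once it is in place the remainder of the argument is essentially bookkeeping.
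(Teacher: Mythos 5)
Your proof is correct, but it takes a genuinely different route from the paper's. The paper disposes of this corollary in a few lines by integrating the pointwise statement of Theorem~\ref{T:powers} over the diagonal: the trace-class property follows there from the continuity of the kernel $k_{A^{-z}}(x,y)$ for $\Re(z)>n/r$, and the pole locations, residues and special values of $\zeta(z)=\int_{x\in M}\tr_E\bigl(k_{A^{-z}}(x,x)\bigr)$ are simply inherited from the meromorphic continuation of $k_{A^{-z}}(x,x)$ established in that theorem. You instead run the Mellin-transform argument directly at the level of the trace, using only Corollary~\ref{C:trace} for the small-time behavior and the spectral gap from Lemma~\ref{L:selfadj} for the large-time tail; this is essentially the computation the paper performs in Section~\ref{S:power} for the pointwise kernel (splitting the Mellin integral at $t=1$, integrating the expansion termwise, absorbing $-\dim\ker(A)/z$ into the $j=n$ pole, dividing by $\Gamma(z)$), transplanted from the diagonal kernel to the heat trace. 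Your route buys independence from Theorem~\ref{T:powers}: the corollary becomes a self-contained consequence of the heat trace expansion, at the modest extra cost of verifying trace-norm convergence of the operator-valued Mellin integral --- which you handle correctly, since $e^{-tA}-P$ is a positive trace-class operator whose trace norm is $O(t^{-n/r})$ near $t=0$ and $O(e^{-t\lambda_1})$ at infinity. The paper's route, by contrast, treats the global zeta function and the local zeta densities $k_{A^{-z}}(x,x)$ uniformly, concentrating all the analysis in the proof of Theorem~\ref{T:powers}; since that theorem is part of the stated hypotheses anyway, its argument is shorter, but yours is more elementary and makes the dependence on the heat trace alone explicit.
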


\begin{proof}
For $\Re(z)>n/r$ the operator $A^{-z}$ is trace class since it has a continuous kernel according to Theorem~\ref{T:powers}.
Moreover,
$$
\zeta(z)=\int_{x\in M}\tr_E\bigl(k_{A^{-z}}(x,x)\bigr)
$$
depends holomorphically on $z$ since the kernel $k_{A^{-z}}(x,x)$, considered as a family in $\Gamma^\infty(\eend(E)\otimes|\Lambda_M|)$, is holomorphic for $\Re(z)>n/r$.
Since $k_{A^{-z}}(x,x)$ can be extended meromorphically to the entire complex plane, the same holds true for $\zeta(z)$.
The pole structure, residues and special values follow immediately from the corresponding statements in Theorem~\ref{T:powers}.
\end{proof}

The next corollary generalizes \cite[Proposition~6.1.2]{P08}.

\begin{corollary}[Weyl's eigenvalue asymptotics]\label{C:weyl}
In the situation of Theorem~\ref{T:heat}, the operator $A$ is essentially selfadjoint on $L^2(E)$ with compact resolvent.
There exists a complete orthonormal system of smooth eigenvectors $\psi_j\in\Gamma^\infty(E)$ with non-negative eigenvalues $\lambda_j\geq0$, that is, $A\psi_j=\lambda_j\psi_j$ for all $j\in\N$.
Moreover, 
\begin{equation}\label{E:trl}
\tr\bigl(e^{-tA}\bigr)=\sum_{j=1}^\infty e^{-t\lambda_j}
\qquad\text{and}\qquad
\zeta(z)=\sum_{j=1}^\infty\lambda_j^{-z},
\end{equation}
for $t>0$ and $\Re(z)>n/r$, respectively.
Furthermore, 
$$
\sharp\{j\in\N\mid\lambda_j\leq\lambda\}\sim\frac{a_0\,\lambda^{n/r}}{\Gamma(1+n/r)}\qquad\textrm{as $\lambda\to\infty$,}
$$
where $a_0>0$ is the constant from Corollary~\ref{C:trace}.
\end{corollary}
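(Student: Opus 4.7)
The plan is to assemble the standard spectral-theoretic consequences from the results already in hand. Essential selfadjointness and compactness of the resolvent were established in Lemma~\ref{L:selfadj}. The spectral theorem for selfadjoint operators with compact resolvent then yields a complete orthonormal system $\psi_j\in L^2(E)$ of eigenvectors with eigenvalues $\lambda_j$ counted with multiplicity, and nonnegativity of $A$ gives $\lambda_j\geq 0$. To promote each $\psi_j$ to a smooth section, I would observe that $\psi_j$ is a distributional solution of $(A-\lambda_j)\psi_j=0$, and since $A-\lambda_j\in\DO^r(E)$ has the same Heisenberg principal symbol as $A$, it is again Rockland, hence hypoelliptic by \cite[Corollary~2.10]{DH17}.

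For the spectral sums, Theorem~\ref{T:heat} asserts that $e^{-tA}$ is smoothing, hence trace class, for each $t>0$; expanding in the eigenbasis, the standard spectral calculus yields $\tr(e^{-tA})=\sum_j e^{-t\lambda_j}$. Similarly, Theorem~\ref{T:powers} provides that $A^{-z}$ has continuous kernel and is trace class for $\Re(z)>n/r$. Since $A^{-z}$ vanishes on $\ker(A)$ and acts as multiplication by $\lambda_j^{-z}$ on the span of $\psi_j$ when $\lambda_j>0$, computing the trace in the eigenbasis gives $\zeta(z)=\sum_{\lambda_j>0}\lambda_j^{-z}=\sum_j\lambda_j^{-z}$ with the convention $0^{-z}:=0$; the absolute convergence of the sum is automatic from trace-class membership.

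The Weyl law is derived from Karamata's Tauberian theorem applied to the leading heat asymptotic. Consider the nondecreasing counting measure $\nu:=\sum_j\delta_{\lambda_j}$ on $[0,\infty)$, so that
$$
N(\lambda):=\sharp\{j\in\N:\lambda_j\leq\lambda\}=\nu([0,\lambda])\qquad\text{and}\qquad\int_{[0,\infty)}e^{-t\mu}\,d\nu(\mu)=\tr(e^{-tA}).
$$
Corollary~\ref{C:trace} gives $\tr(e^{-tA})\sim a_0\,t^{-n/r}$ as $t\searrow 0$, with $a_0>0$ and $n/r>0$. Karamata's theorem then yields
$$
N(\lambda)\sim\frac{a_0\,\lambda^{n/r}}{\Gamma(1+n/r)}\qquad\text{as }\lambda\to\infty,
$$
which is the claimed asymptotic.

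The only delicate point is the application of the Tauberian theorem, where one needs $\nu$ to be nondecreasing and the leading exponent to be strictly positive; both hold automatically from the nonnegativity of $A$ and from $r\geq 1$. Everything else is a routine unpacking of Lemma~\ref{L:selfadj}, Theorems~\ref{T:heat} and \ref{T:powers}, and Corollary~\ref{C:trace}.
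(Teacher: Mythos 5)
Your proof is correct and follows essentially the same route as the paper: Lemma~\ref{L:selfadj} for essential selfadjointness and compactness of the resolvent, the spectral theorem for the orthonormal eigenbasis with nonnegative eigenvalues, hypoellipticity for smoothness, trace-class membership of $e^{-tA}$ and $A^{-z}$ for the spectral sums, and Karamata's Tauberian theorem together with Corollary~\ref{C:trace} for the Weyl asymptotic. The small elaborations you add (noting that $A-\lambda_j$ is again Rockland since it has the same Heisenberg principal symbol as $A$, and making the convention for the $\ker(A)$ contribution to $\zeta$ explicit) are consistent with, and slightly more detailed than, the paper's phrasing.
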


\begin{proof}
We have already shown that $A$ is essentially selfadjoint with compact resolvent, see Lemma~\ref{L:selfadj}.
It is well known that the spectrum of these operators is discrete \cite[Theorem~6.29 in Chapter~III\S6.8]{K95} and real \cite[Chapter~V\S3.5]{K95}.
Since $A$ is non-negative, each eigenvalue has to be non-negative.
By hypoellipticity, its eigenfunctions are smooth, see \cite[Corollary~2.10]{DH17}.
Since $e^{-tA}$ and $A^{-z}$ are trace class for $t>0$ and $\Re(z)>n/r$, respectively, the expressions \eqref{E:trl} follow immediately, see \cite[Chapter~X\S1.4]{K95}.
Using the Tauberian theorem of Karamata, see \cite[Theorem~108]{H49} or \cite[Problem~14.2]{S01}, Weyl's law for the asymptotics of eigenvalues follows from the heat trace asymptotics in Corollary~\ref{C:trace}.
\end{proof}

To formulate the next corollary, suppose $E$ and $F$ are two vector bundles over a closed filtered manifold $M$ and let $D\in\DO^k(E,F)$ be a differential operator of Heisenberg order at most $k\geq1$ such that $D$ and $D^*$ are both Rockland.
Then $D$ induces a Fredholm operator between Heisenberg Sobolev spaces, $D\colon H_s(E)\to H_{s-2k}(F)$, for every real $s$, see \cite[Corollary~3.28]{DH17}.
Moreover, its index does not depend on $s$ and can be expressed as 
\begin{equation}\label{E:ind}
\ind(D)=\dim\ker(D)-\dim\ker(D^*).
\end{equation}
By hypoellipticity, we have $\ker(D)\subseteq\Gamma^\infty(E)$ and $\ker(D^*)\subseteq\Gamma^\infty(F)$, see \cite[Corollary~2.10]{DH17}.

Using \eqref{E:sAB} one readily checks that $D^*D$ and $DD^*$ are differential operators of Heisenberg order at most $2k$ which satisfy the Rockland condition.
Clearly, they are formally selfadjoint and non-negative.
According to Theorem~\ref{T:heat} their heat kernels admit asymptotic expansions as $t\searrow0$,
$$
k_t^{D^*D}(x,x)\sim\sum_{j=0}^\infty t^{(n-j)/2k}q_j^{D^*D}(x)
$$
and
$$
k_t^{DD^*}(x,x)\sim\sum_{j=0}^\infty t^{(n-j)/2k}q_j^{DD^*}(x),
$$
respectively. Here $q_j^{D^*D}\in\Gamma^\infty(\eend(E)\otimes|\Lambda_M|)$ and $q_j^{DD^*}\in\Gamma^\infty(\eend(F)\otimes|\Lambda_M|)$ denote the local quantities from Theorem~\ref{T:heat} for the operators $D^*D$ and $DD^*$, respectively.

\begin{corollary}[McKean--Singer index formula]\label{C:McKeanSinger}
Let $E$ and $F$ be two vector bundles over a closed filtered manifold $M$.
Moreover, let $D\in\DO^k(E,F)$ be a differential operator of Heisenberg order at most $k\geq1$ such that $D$ and $D^*$ both satisfy the Rockland condition.
Then 
\begin{equation}\label{E:MS}
\ind(D)
=a_n^{D^*D}-a_n^{DD^*}
\end{equation}
where $a_n^{D^*D}=\int_M\tr_E(q_n^{D^*D})$ and $a_n^{DD^*}=\int_M\tr_F(q_n^{DD^*})$.
In particular, we have $\ind(D)=0$ whenever the homogeneous dimension $n$ is odd.
\end{corollary}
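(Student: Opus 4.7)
The plan is to establish the classical McKean--Singer identity $\tr(e^{-tD^*D}) - \tr(e^{-tDD^*}) = \ind(D)$ for every $t>0$, and then extract \eqref{E:MS} from the constant term of the heat trace expansion given by Corollary~\ref{C:trace}. First, I would identify the kernels: using $\|D\psi\|_F^2 = \llangle D^*D\psi,\psi\rrangle_E$ for smooth $\psi$ together with the hypoellipticity of $D$ and $D^*$ (which forces any distributional null solution to lie in $\Gamma^\infty$), one obtains $\ker(D^*D) = \ker(D)$ and $\ker(DD^*) = \ker(D^*)$. Combined with \eqref{E:ind} this yields $\ind(D) = \dim\ker(D^*D) - \dim\ker(DD^*)$.

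Next, since $D^*D$ and $DD^*$ are formally selfadjoint non-negative Rockland operators of even Heisenberg order $2k$, as already noted before the statement using \eqref{E:sAB}, Corollary~\ref{C:weyl} provides complete orthonormal bases of smooth eigenvectors with non-negative eigenvalues. The standard intertwining argument then applies: for $\lambda>0$, the assignment $\psi \mapsto \lambda^{-1/2} D\psi$ defines a unitary isomorphism from the $\lambda$-eigenspace of $D^*D$ onto the $\lambda$-eigenspace of $DD^*$, its inverse being $\phi \mapsto \lambda^{-1/2} D^*\phi$ (using $DD^*D = D(D^*D)$ and $\|D\psi\|^2 = \lambda\|\psi\|^2$). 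Hence the positive spectra of $D^*D$ and $DD^*$ coincide with multiplicities, and the first identity in \eqref{E:trl} gives
\[
\tr(e^{-tD^*D}) - \tr(e^{-tDD^*}) = \dim\ker(D^*D) - \dim\ker(DD^*) = \ind(D)
\]
for every $t>0$.

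Finally, applying Corollary~\ref{C:trace} to $D^*D$ and $DD^*$ and subtracting yields the asymptotic expansion
\[
\ind(D) \sim \sum_{j=0}^\infty t^{(j-n)/2k}\bigl(a_j^{D^*D} - a_j^{DD^*}\bigr) \qquad \textrm{as } t\searrow0.
\]
Since the left-hand side is independent of $t$ and the exponents $(j-n)/2k$ are pairwise distinct, uniqueness of the asymptotic expansion forces $a_j^{D^*D} = a_j^{DD^*}$ for all $j \neq n$ and identifies the $t^0$ term as $a_n^{D^*D} - a_n^{DD^*} = \ind(D)$, which is \eqref{E:MS}. For the concluding assertion, if $n$ is odd then Theorem~\ref{T:heat} gives $q_n^{D^*D} = 0$ and $q_n^{DD^*} = 0$ pointwise (every odd-indexed $q_j$ vanishes), so both integrals $a_n^{D^*D}$ and $a_n^{DD^*}$ are zero and therefore $\ind(D) = 0$. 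The only step that warrants any real care is the intertwining argument in the middle paragraph, where the rearrangement of the eigenvalue sum needs justification; however, the trace-class nature of $e^{-tD^*D}$ and $e^{-tDD^*}$ (guaranteed by the smoothing property in Theorem~\ref{T:heat}) combined with the discreteness of the spectrum from Corollary~\ref{C:weyl} makes this routine, so I do not anticipate any genuine obstacle.
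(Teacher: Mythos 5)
Your proof is correct, but the middle step takes a different (equally classical) route from the paper's. To show that $\tr(e^{-tD^*D})-\tr(e^{-tDD^*})$ equals $\ind(D)$ for every $t>0$, you pair the positive eigenvalues of $D^*D$ and $DD^*$ via the unitary intertwiners $\psi\mapsto\lambda^{-1/2}D\psi$ and $\phi\mapsto\lambda^{-1/2}D^*\phi$, so the nonzero spectra cancel in \eqref{E:trl} and only $\dim\ker(D^*D)-\dim\ker(DD^*)=\ind(D)$ survives; everything here rests on Corollary~\ref{C:weyl} and the identifications $\ker(D^*D)=\ker(D)$, $\ker(DD^*)=\ker(D^*)$, which you justify correctly by hypoellipticity and $\llangle D^*D\psi,\psi\rrangle=\|D\psi\|^2$. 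The paper instead differentiates the supertrace in $t$, using $De^{-tD^*D}=e^{-tDD^*}D$ and cyclicity of the trace to conclude it is constant, and then evaluates the constant in the limit $t\to\infty$, where $e^{-tD^*D}$ converges in trace norm to the projection onto $\ker(D^*D)$ (via Lemma~\ref{L:ktxyesti} or Weyl's law). Your version buys independence from the large-time estimate and makes the spectral symmetry explicit; the paper's version buys independence from the explicit eigenspace pairing and generalizes more readily to settings without a convenient spectral decomposition. The final extraction of \eqref{E:MS} from Corollary~\ref{C:trace} by uniqueness of the asymptotic expansion, and the vanishing for odd $n$, match the paper.
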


\begin{proof}
The argument is the same as in the classical case.
As usual, 
$$
\tfrac\partial{\partial t}\Bigl(\tr\bigl(e^{-tD^*D}\bigr)-\tr\bigl(e^{-tDD^*}\bigr)\Bigr)
=\tr\bigl(D^*De^{-tD^*D}\bigr)-\tr\bigl(DD^*e^{-tDD^*}\bigr)
=0,
$$
for we have $De^{-tD^*D}=e^{-tDD^*}D$ and thus $\tr(D^*De^{-tD^*D})=\tr(DD^*e^{-tDD^*})$.
Hence, $\tr\bigl(e^{-tD^*D}\bigr)-\tr\bigl(e^{-tDD^*}\bigr)$ is constant in $t$ and Corollary~\ref{C:trace} yields
\begin{equation}\label{E:abc0}
\tr\bigl(e^{-tD^*D}\bigr)-\tr\bigl(e^{-tDD^*}\bigr)=a_n^{D^*D}-a_n^{DD^*},
\end{equation}
for all $t>0$.
On the other hand, $e^{-tD^*D}$ converges to the orthogonal projection onto $\ker(D^*D)$ with respect to the trace norm, as $t\to\infty$.
This follows from Weyl's law in Corollary~\ref{C:weyl} or, more directly, from Lemma~\ref{L:ktxyesti} below.
Hence,
\begin{equation}\label{E:abc1}
\lim_{t\to\infty}\tr\bigl(e^{-tD^*D}\bigr)=\dim\ker(D^*D)=\dim\ker(D).
\end{equation}
Analogously, $e^{-tDD^*}$ converges to the orthogonal projection onto $\ker(DD^*)$ and 
\begin{equation}\label{E:abc2}
\lim_{t\to\infty}\tr\bigl(e^{-tDD^*}\bigr)=\dim\ker(DD^*)=\dim\ker(D^*).
\end{equation}
Combining \eqref{E:abc1} and \eqref{E:abc2} with \eqref{E:ind}, we obtain
$$
\lim_{t\to\infty}\Bigl(\tr\bigl(e^{-tD^*D}\bigr)-\tr\bigl(e^{-tDD^*}\bigr)\Bigr)=\ind(D).
$$
Combining this with \eqref{E:abc0}, we obtain the McKean--Singer index formula \eqref{E:MS}.
If $n$ is odd, then $a_n^{D^*D}=0=a_n^{DD^*}$ according to Corollary~\ref{C:trace}, whence $\ind(D)=0$.
\end{proof}

According to Atiyah \cite{A70} elliptic differential operators represent K-homology classes of the underlying manifold, see also \cite{K75}, \cite[Section~17]{B98}, or \cite[Section~5]{R96}.
We have the following generalization for Rockland differential operators.

\begin{corollary}[K-homology class]\label{C:KM}
Let $E$ and $F$ be two vector bundles over a closed filtered manifold $M$.
Moreover, let $D\in\DO^k(E,F)$ be a differential operator of Heisenberg order at most $k\geq1$ such that $D$ and $D^*$ both satisfy the Rockland condition.
Then the following hold true:
\begin{enumerate}[(a)]
\item\label{C:KM:a} $P:=D(\id_E+D^*D)^{-1/2}=(\id_F+DD^*)^{-1/2}D$ is bounded from $L^2(E)$ to $L^2(F)$.
\item\label{C:KM:b} $P^*P-\id_E$ is compact on $L^2(E)$.
\item\label{C:KM:c} $PP^*-\id_F$ is compact on $L^2(F)$.
\item\label{C:KM:d} $[f,P]$ is compact from $L^2(E)$ to $L^2(F)$, for each $f\in C^\infty(M,\mathbb C)$.
\end{enumerate}
Hence, the operator $\begin{pmatrix}0&P^*\\P&0\end{pmatrix}$ acting on $L^2(E)\oplus L^2(F)$ together with the action of the $C^*$-algebra $C(M)$ by multiplication constitutes a graded Fredholm module, representing a K-homology class in $K_0(M)=KK(C(M),\mathbb C)$.
\end{corollary}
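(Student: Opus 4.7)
The plan is to identify every operator appearing in the statement as a Heisenberg pseudodifferential operator and to read off boundedness or compactness from its Heisenberg order. The key preliminary observation is that $\id_E+D^*D$ and $\id_F+DD^*$ are formally selfadjoint, strictly positive, Rockland differential operators of even Heisenberg order $2k$: at each $x\in M$ the principal cosymbol is $\pi(\sigma^{2k}_x(D^*D))=\pi(\sigma^k_x(D))^*\pi(\sigma^k_x(D))$, which is injective on smooth vectors because $\pi(\sigma^k_x(D))$ is. Theorem~\ref{T:powers} then applies and yields $(\id_E+D^*D)^{-1/2}\in\Psi^{-k}(E)$ and $(\id_F+DD^*)^{-1/2}\in\Psi^{-k}(F)$. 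The equality of the two expressions for $P$ in \itemref{C:KM:a} would follow from the intertwining $D(\id_E+D^*D)=(\id_F+DD^*)D$ together with the spectral theorem applied to the selfadjoint operators $D^*D$ and $DD^*$: the polynomial identity $Dp(D^*D)=p(DD^*)D$ extends to $Df(D^*D)=f(DD^*)D$ for Borel $f$, taken with $f(x)=(1+x)^{-1/2}$.

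With these identifications in place, \itemref{C:KM:a} is the observation that $P$ lies in $\Psi^k\circ\Psi^{-k}\subseteq\Psi^0(E,F)$, hence is bounded on $L^2$ by \cite[Proposition~3.9(a)]{DH17}. For \itemref{C:KM:b} and \itemref{C:KM:c} the plan is the algebraic collapse
\begin{equation*}
P^*P=\id_E-(\id_E+D^*D)^{-1},\qquad PP^*=\id_F-(\id_F+DD^*)^{-1},
\end{equation*}
obtained from the companion intertwining $D^*(\id_F+DD^*)=(\id_E+D^*D)D^*$. Each right-hand side differs from the identity by a pseudodifferential operator of Heisenberg order $-2k<0$, which is compact on $L^2$ by \cite[Proposition~3.9(b)]{DH17}.

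Claim \itemref{C:KM:d} is where I expect the main technical obstacle. Decomposing
\begin{equation*}
[f,P]=[f,D]\,(\id_E+D^*D)^{-1/2}+D\,\bigl[f,(\id_E+D^*D)^{-1/2}\bigr],
\end{equation*}
I would argue that both summands have Heisenberg order at most $-1$ and are therefore compact. The first is handled by the Leibniz rule, which gives $[f,D]\in\DO^{k-1}(E,F)$ and places the summand in $\Psi^{-1}(E,F)$. For the second summand, I need the analogous order-reducing property of the commutator of a multiplication operator with a genuine Heisenberg pseudodifferential operator, namely $[f,\Psi^{-k}]\subseteq\Psi^{-k-1}$. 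This is the one step that requires invoking the composition/commutator structure of the Heisenberg calculus of \cite{EY15v5} rather than formal manipulation, and is the point on which I expect to have to work hardest to pin down a clean reference. Once \itemref{C:KM:a}--\itemref{C:KM:d} are established, the conclusion that $\begin{pmatrix}0&P^*\\P&0\end{pmatrix}$ together with the multiplication action of $C(M)$ defines a graded Fredholm module extends from $f\in C^\infty(M,\mathbb C)$ to $f\in C(M)$ by density, using that the compact operators are norm-closed and $\|[f,P]\|\leq 2\|f\|_\infty\|P\|$.
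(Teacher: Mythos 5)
Your proposal is correct and follows essentially the same route as the paper: identify $(\id_E+D^*D)^{-1/2}\in\Psi^{-k}(E)$ via Theorem~\ref{T:powers}, read off (a)--(c) from the Heisenberg orders using \cite[Proposition~3.9]{DH17}, and prove (d) with the identical decomposition of $[f,P]$. The commutator fact $[f,(\id_E+D^*D)^{-1/2}]\in\Psi^{-k-1}(E)$ that you flag as the main obstacle is exactly what the paper invokes (without further elaboration) at the same point, so there is no gap relative to the paper's own standard of proof.
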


\begin{proof}
According to Theorem~\ref{T:powers} we have $(\id_E+D^*D)^{-1/2}\in\Psi^{-k}(E)$, and thus $P\in\Psi^0(E,F)$.
Hence, $P$ represents a bounded operator $L^2(E)\to L^2(F)$, see \cite[Proposition~3.9(a)]{DH17}.
This shows \itemref{C:KM:a}.

Clearly, $P^*P-\id_E=-(\id_E+D^*D)^{-1}\in\Psi^{-2k}(E)$.
Hence, $P^*P-\id_E$ represents a compact operator on $L^2(E)$, see \cite[Proposition~3.9(b)]{DH17}.
This shows \itemref{C:KM:b}, and \itemref{C:KM:c} can be proved analogously.

To see \itemref{C:KM:d}, we write
$$
[f,P]=[f,D](\id_E+D^*D)^{-1/2}+D[f,(\id_E+D^*D)^{-1/2}].
$$
The first summand is contained in $\Psi^{-1}(E,F)$, for we have $[f,D]\in\Psi^{k-1}(E,F)$ and $(\id_E+D^*D)^{-1/2}\in\Psi^{-k}(E)$.
The second summand is in $\Psi^{-1}(E,F)$ too, for $[f,(\id_E+D^*D)^{-1/2}]\in\Psi^{-k-1}(E)$.
Hence $[f,P]\in\Psi^{-1}(E,F)$, and thus $[f,P]$ is compact, see \cite[Proposition~3.9(b)]{DH17}.
\end{proof}

\section{Pseudodifferential operators on filtered manifolds}\label{S:calculus}

In this section we will briefly recall van~Erp and Yuncken's pseudodifferential operator calculus on filtered manifolds, see \cite{EY15v5} and \cite{DH17}.
Moreover, we will introduce a subclass of operators of integral order, characterized by an additional symmetry, and containing all differential operators.
This class will be used to show that the heat kernel expansion for a differential operator has no log terms, and every other polynomial term vanishes.

Let $M$ be a filtered manifold.
For any two complex vector bundles $E$ and $F$ over $M$, and every complex number $s$, there is a class of operators called \emph{pseudodifferential operators of Heisenberg order $s$} and denoted by $\Psi^s(E,F)$, mapping sections of $E$ to sections of $F$.
Every $A\in\Psi^s(E,F)$ has a distributional Schwartz kernel $k\in\Gamma^{-\infty}(F\boxtimes E')$ with wave front set contained in the \emph{conormal} of the diagonal.
In particular, $k$ is smooth away from the diagonal and $A$ induces a continuous operator $\Gamma^\infty_c(E)\to\Gamma^\infty(F)$ which extends continuously to a pseudolocal operator on distributional sections, $\Gamma^{-\infty}_c(E)\to\Gamma^{-\infty}(F)$.
Here $E':=E^*\otimes|\Lambda_M|$ where $E^*$ denotes the dual bundle and $|\Lambda_M|$ is the line bundle of $1$-densities on $M$.
Moreover, $F\boxtimes E'=p_1^*F\otimes p_2^*E'$ where $p_i\colon M\times M\to M$ denote the canonical projections, $i=1,2$.
As usual,
$$
\langle\phi,A\psi\rangle=\int_{(x,y)\in M\times M}\phi(x)k(x,y)\psi(y)
$$
for all $\psi\in\Gamma^\infty_c(E)$ and $\phi\in\mathcal D(F)=\Gamma^\infty_c(F')$ where $\langle-,-\rangle$ denotes the canonical pairing between $\mathcal D(F)=\Gamma^\infty_c(F')$ and $\Gamma^\infty(F)$.
We will denote the space of these conormal kernels by $\mathcal K(M\times M;E,F)$.
Moreover, we introduce the notation $\mathcal K^\infty(M\times M;E,F):=\Gamma^\infty(F\boxtimes E')$ for the subspace of smooth kernels corresponding to smoothing operators, denoted by $\mathcal O^{-\infty}(E,F)$.
The operators in the class $\Psi^s(E,F)$ can be characterized by having a conormal kernel which admits an \emph{asymptotic expansion along the diagonal} in carefully chosen coordinates.

The asymptotic expansion of the kernels can be formulated using tubular neighborhoods of the diagonal adapted to the filtration of $M$.
These are constructed using two geometrical choices: 
(1) a splitting of the filtration on $TM$, i.e., a vector bundle isomorphism $S\colon\mathfrak tM\to TM$ mapping $\mathfrak t^pM$ into $T^pM$ such that the composition with the canonical projection $T^pM\to T^pM/T^{p+1}M=\mathfrak t^pM$ is the identity on $\mathfrak t^pM$;
and (2) a linear connection $\nabla$ on $TM$ preserving the decomposition $TM=\bigoplus_pS(\mathfrak t^pM)$.
Denoting the corresponding exponential map by $\exp^\nabla\colon TM\to M$, we obtain a commutative diagram:
\begin{equation}\label{D:expexp}
\vcenter{\xymatrix{
\mathcal TM\ar[d]_-\pi&&\mathfrak tM\ar[ll]_-\exp^-\cong\ar[rr]^-{-S}_-\cong\ar[d]_{\pi^{\mathfrak tM}}&&TM\ar[rr]^-{(\pi^{TM},\exp^\nabla)}\ar[d]^{\pi^{TM}}&&M\times M\ar[d]^-{\pr_1}
\\
M\ar@/_/[u]_-o\ar@{=}[rr]&&M\ar@/_/[u]_-{o^{\mathfrak tM}}\ar@{=}[rr]&&M\ar@{=}[rr]\ar@/^/[u]^-{o^{TM}}&&M.\ar@/^/[u]^-\Delta
}}
\end{equation}
Here $\exp\colon\mathfrak tM\to\mathcal TM$ denotes the fiberwise exponential map, all downwards heading vertical arrows indicate canonical bundle projections, and the upwards pointing vertical arrows denote the corresponding zero (neutral) sections.
In particular, $\Delta(x)=(x,x)$ denotes the diagonal mapping, and $\pr_1(x,y)=x$.
Using $-S$ to identify $\mathfrak tM$ with $TM$, mediates between two common, yet conflicting, conventions we have adopted: 
The Lie algebra of a Lie group is defined using \emph{left} invariant vector fields, while the Lie algebroid of a smooth Lie groupoid is defined using \emph{right} invariant vector fields.

Restricting the composition in the top row of diagram~\eqref{D:expexp} to a sufficiently small neighborhood $U$ of the zero section in $\mathcal TM$, it gives rise to a diffeomorphism $\varphi\colon U\to V$ onto an open neighborhood $V$ of the diagonal in $M\times M$ such that the rectangle at the bottom of diagram~\eqref{D:expcoor} commutes:
\begin{equation}\label{D:expcoor}
\vcenter{\xymatrix{
\bigl(\hom(\pi^*E,\pi^*F)\otimes\Omega_\pi\bigr)|_U\ar[d]\ar[r]^-{\phi}_-\cong&(F\boxtimes E')|_V\ar[d]
\\
**[l]\mathcal TM\supseteq U\ar[r]^-{\varphi}_-\cong\ar[d]_-{\pi|_U}&**[r]V\subseteq M\times M\ar[d]^-{\pr_1|_V}
\\
M\ar@/_/[u]_-o\ar@{=}[r]&M.\ar@/^/[u]^-\Delta
}}
\end{equation}
Possibly shrinking $U$, there exists a vector bundle isomorphism $\phi$ over $\varphi$ which restricts to the tautological identification over the diagonal/zero section,
\begin{equation}\label{E:tautzero}
o^*\bigl(\hom(\pi^*E,\pi^*F)\otimes\Omega_\pi\bigr)=\hom(E,F)\otimes|\Lambda_M|=\Delta^*(F\boxtimes E'),
\end{equation}
and such that the upper rectangle in diagram~\eqref{D:expcoor} commutes.
Here $\Omega_\pi$ is the line bundle over $\mathcal TM$ obtained by applying the representation $|{\det}|$ to the frame bundle of the vertical bundle $\ker(T\pi)$.
Note that the restriction $\Omega_\pi|_{\mathcal T_xM}$ is canonically isomorphic to the 1-density bundle $|\Lambda_{\mathcal T_xM}|$ over $\mathcal T_xM$.
Moreover, we have a canonical identification $o^*\Omega_\pi=|\Lambda_{\mathfrak tM}|=|\Lambda_{TM}|$ used in \eqref{E:tautzero}.
Every pair $(\varphi,\phi)$ as above will be referred to as \emph{exponential coordinates adapted to the filtration.}
Only the germ of $(\varphi,\phi)$ along the zero section is relevant for formulating the asymptotic expansion.
We will occasionally suppress the restriction to the neighborhoods $U$ or $V$ in our notation.

Put $\mathcal K^\infty(\mathcal TM;E,F):=\Gamma^\infty\bigl(\hom(\pi^*E,\pi^*F)\otimes\Omega_\pi\bigr)$, and let $\mathcal K(\mathcal TM;E,F)$ denote the space of distributional sections of $\hom(\pi^*E,\pi^*F)\otimes\Omega_\pi$ with wave front set contained in the conormal of the zero section $o(M)$.
In particular, elements in $\mathcal K(\mathcal TM;E,F)$ are assumed to be smooth away from the zero section.
Equivalently, these can be characterized as families $a_x\in\Gamma^{-\infty}(|\Lambda_{\mathcal T_xM}|)\otimes\hom(E_x,F_x)$ which are smooth away from the origin (regular) and depend smoothly on $x\in M$.

Let $s$ be a complex number.
An element $a\in\mathcal K(\mathcal TM;E,F)$ is called \emph{essentially homogeneous of order $s$} if $(\delta_\lambda)_*a=\lambda^sa$ mod $\mathcal K^\infty(\mathcal TM;E,F)$, for all $\lambda>0$.
The space of principal cosymbols of order $s$ will be denoted by 
$$
\Sigma^s(E,F):=\left\{a\in\frac{\mathcal K(\mathcal TM;E,F)}{\mathcal K^\infty(\mathcal TM;E,F)}:\text{$(\delta_\lambda)_*a=\lambda^sa$ for all $\lambda>0$}\right\},
$$
cf.\ \cite[Definition~34]{EY15v5}.
Our notation will often not distinguish between elements in $\Sigma^s(E,F)$ and the distributions in $\mathcal K(\mathcal TM;E,F)$ representing them.
Operators in $\Psi^s(E,F)$ can be characterized as those having a conormal Schwartz kernel $k\in\mathcal K(M\times M;E,F)$ which admits an asymptotic expansion of the form
\begin{equation}\label{E:asymp}
\phi^*(k|_V)\sim\sum_{j=0}^\infty k_j
\end{equation}
where $k_j\in\Sigma^{s-j}(E,F)$.
More precisely, for every integer $N$ there exists an integer $j_N$ such that $\phi^*(k|_V)-\sum_{j=0}^{j_N}k_j$ is of class $C^N$.
Strictly speaking, the right hand side of \eqref{E:asymp} involves distributions representing the classes $k_j\in\Sigma^{s-j}(E,F)$, restricted to $U$.
Clearly, the condition expressed in \eqref{E:asymp} does not depend on the choice of these representatives.
We will continue to suppress this in our notation in similar formulas below.

It is a non-trivial fact that a kernel which admits an asymptotic expansion as above, also has an asymptotic expansion of the same form with respect to any other exponential coordinates adapted to the filtration, see \cite{EY15v5} and \cite[Remark~3.7]{DH17}.
Moreover, the leading term, $\sigma^s(A):=k_0\in\Sigma^s(E,F)$, is independent of the exponential coordinates and referred to as \emph{Heisenberg principal symbol} of $A\in\Psi^s(E,F)$.
It provides a short exact sequence
$$
0\to\Psi^{s-1}(E,F)\to\Psi^s(E,F)\xrightarrow{\sigma^s}\Sigma^s(E,F)\to0.
$$

The basic properties of the class $\Psi^s(E,F)$ and the Heisenberg principal symbol have been established in \cite{EY15v5} and are summarized in \cite[Proposition~3.4]{DH17}.
Let us mention a few.
If $A\in\Psi^s(E,F)$ and $B\in\Psi^r(F,G)$, then $BA\in\Psi^{r+s}(E,G)$ and $\sigma^{r+s}(BA)=\sigma^r(B)\sigma^s(A)$, provided one of the two operators is properly supported.
Here the multiplication of cosymbols, $\Sigma^r(F,G)\times\Sigma^s(E,F)\to\Sigma^{r+s}(E,G)$ is by fiberwise convolution on $\mathcal TM$.
Moreover, $A^t\in\Psi^s(F',E')$ and $\sigma^s(A^t)=\sigma^s(A)^t$ where $A^t$ denotes the formal transposed operator, and the transposed of $\sigma^s(A)$ is defined using the (fiberwise) inversion on $\mathcal TM$.
Assuming $r\in\N_0$, a differential operator is contained in $\Psi^r(E,F)$ if and only if it has Heisenberg order at most $r$ in the sense of Section~\ref{S:results}, and in this case the Heisenberg principal symbol discussed in Section~\ref{S:results} is related to the one introduced in the preceding paragraph by  the canonical inclusion
$$
\Gamma^\infty\bigl(\mathcal U_{-r}(\mathfrak tM)\otimes\hom(E,F)\bigr)\subseteq\Sigma^r(E,F).
$$
We have $\bigcap_{j=0}^\infty\Psi^{s-j}(E,F)=\mathcal O^{-\infty}(E,F)$, where the right hand side denotes the smoothing operators. 
The calculus is asymptotically complete.
Hence, an operator $A\in\Psi^s(E,F)$ admits a left parametrix $B\in\Psi^{-s}_\prop(F,E)$, that is to say, $BA-\id$ is a smoothing operator, if and only if $\sigma^s(A)$ admits a left inverse $b\in\Sigma^{-s}(F,E)$, that is, $b\sigma^s(A)=1$ in $\Sigma^0(E,E)$, see \cite[Theorem~60]{EY15v5}.
The general Rockland theorem in \cite[Theorem~3.13]{DH17} asserts that such a left inverse $b$ for the principal symbol exists if and only if $\sigma^s_x(A)$ satisfies the Rockland condition at each point $x\in M$.
The operator class $\Psi^s$ gives rise to a Heisenberg Sobolev scale with the expected mapping properties, see \cite[Proposition~3.21]{DH17} for details.

A more intrinsic characterization of $\Psi^s(E,F)$ can be given in terms of the tangent groupoid associated to a filtered manifold, see \cite{EY15v5,EY16,CP19}.
This is a smooth groupoid with space of units $M\times\R$ and arrows
$$
\mathbb TM=\bigl(\mathcal T^\op M\times\{0\}\bigr)\sqcup\bigl(M\times M\times\R^\times\bigr),
$$ 
defined such that the inclusions $\inc_0\colon\mathcal T^\op M\to\mathbb TM$ and $\inc_t\colon M\times M\to\mathbb TM$ for $t\neq0$ are smooth maps of groupoids.
Here we use the notation $\mathbb R^\times:=\mathbb R\setminus\{0\}$.
As with $-S$ in diagram~\eqref{D:expexp} above, the opposite groupoid $\mathcal T^\op M$ resolves two conflicting, yet common, conventions we are following, one for Lie algebras of Lie groups, and another for Lie algebroids of smooth groupoids.
The smooth structure on $\mathbb TM$ can be characterized using adapted exponential coordinates, see \cite[Theorem~16]{EY16}.
Indeed, if $\varphi\colon U\to V$ is as above, then
$$
\mathcal T^\op M\times\R\supseteq\mathbb U\xrightarrow{\Phi}\mathbb V\subseteq\mathbb TM,
\qquad
\Phi(g,t):=\begin{cases}(g,0)&\textrm{if $t=0$, and}\\\bigl(\varphi(\delta_t(\nu(g))),t\bigr)&\textrm{if $t\neq0$,}\end{cases}
$$
is a diffeomorphism from the open subset 
$$
\mathbb U:=\bigl(\mathcal T^\op M\times\{0\}\bigr)\cup\bigl\{(g,t)\in\mathcal T^\op M\times\R^\times:\delta_t(\nu(g))\in U\bigr\}
$$ 
onto the open subset $\mathbb V:=\bigl(\mathcal T^\op M\times\{0\}\bigr)\cup\bigl(V\times\mathbb R^\times\bigr)$.
Here $\nu$ denotes the inversion on the osculating groupoid, i.e., the fiberwise inversion on $\mathcal TM$, and may be regarded as an isomorphism of smooth groupoids, $\nu\colon\mathcal T^\op M\to\mathcal TM$, which intertwines the dilation $\delta_\lambda^\op$ on $\mathcal T^\op M$ with the dilation $\delta_\lambda$ on $\mathcal TM$.
The group of automorphisms $\delta_\lambda^\op$ of $\mathcal T^\op M$ extends to a group of automorphisms of the tangent groupoid called the \emph{zoom action} by putting $\delta_\lambda^{\mathbb TM}(g,0):=(\delta^\op_\lambda(g),0)$ and $\delta_\lambda^{\mathbb TM}(x,y,t):=(x,y,t/\lambda)$ for $t\in\mathbb R^\times$ and all $\lambda\neq0$, see \cite[Definition~17]{EY15v5}.

For two vector bundles $E$ and $F$ over $M$, we consider the vector bundle 
\begin{equation}\label{E:EEFFO}
\hom\bigl(\sigma^*(E\times\R),\tau^*(F\times\R)\bigr)\otimes\Omega_\tau
\end{equation}
over $\mathbb TM$, where $\sigma,\tau\colon\mathbb TM\to M\times\R$ denote the source and target maps given by $\sigma(g,0)=(\pi(g),0)=\tau(g,0)$, $\sigma(x,y,t)=(y,t)$, and $\tau(x,y,t)=(x,t)$ where $g\in\mathcal T^\op M$, $t\in\mathbb R^\times$, and $x,y\in M$.
Moreover, $\Omega_\tau$ denotes the line bundle obtained by applying the representation $|{\det}|$ to the frame bundle of the vertical bundle of $\tau$.
We let $\mathcal K^\infty(\mathbb TM;E,F)$ denote the space of smooth sections of the vector bundle \eqref{E:EEFFO}, and we let $\mathcal K(\mathbb TM;E,F)$ denote the space of distributional sections of \eqref{E:EEFFO} with wave front set conormal to the space of units in $\mathbb TM$.
The inclusions and the scaling automorphisms give rise to a commutative diagram, $t\in\mathbb R^\times$ and $\lambda\neq0$,
\begin{equation}\label{D:KTTM}
\vcenter{\xymatrix{
\mathcal K(\mathcal T^\op M;E,F)\ar[d]^-{(\delta_\lambda^\op)_*}&\mathcal K(\mathbb TM;E,F)\ar[l]_-{\ev_0}\ar[r]^-{\ev_t}\ar[d]^-{(\delta_\lambda^{\mathbb TM})_*}&\mathcal K(M\times M;E,F)\ar@{=}[d]
\\
\mathcal K(\mathcal T^\op M;E,F)&\mathcal K(\mathbb TM;E,F)\ar[l]_-{\ev_0}\ar[r]^-{\ev_{t/\lambda}}&\mathcal K(M\times M;E,F)
}}
\end{equation}
in which all maps are multiplicative, that is to say, compatible with the convolution and transposition induced by the groupoid structures on $\mathcal T^\op M$, $\mathbb TM$, and $M\times M$, respectively.

A conormal kernel $k\in\mathcal K(M\times M;E,F)$ corresponds to an operator in $\Psi^s(E,F)$ if and only if it admits an extension across the tangent groupoid which is essentially homogeneous of order $s$.
More precisely, iff there exists $\mathbb K\in\mathcal K(\mathbb TM;E,F)$ such that $\ev_1(\mathbb K)=k$ and $(\delta^{\mathbb TM}_\lambda)_*\mathbb K=\lambda^s\mathbb K$ mod $\mathcal K^\infty(\mathbb TM;E,F)$ for all $\lambda>0$, see \cite[Definition~19]{EY15v5} or \cite[Definition~3.2]{DH17}.
In this case we have $\sigma^s(A)=\nu^*(\ev_0(\mathbb K))$.

To express this more succinctly, let us introduce the notation
$$
\Sigma^s(\mathbb TM;E,F):=\left\{\mathbb K\in\frac{\mathcal K(\mathbb TM;E,F)}{\mathcal K^\infty(\mathbb TM;E,F)}:\text{$(\delta^{\mathbb TM}_\lambda)_*\mathbb K=\lambda^s\mathbb K$ for all $\lambda>0$}\right\}
$$
and
$$
\Sigma^s(\mathcal T^\op M;E,F):=\left\{a\in\frac{\mathcal K(\mathcal T^\op M;E,F)}{\mathcal K^\infty(\mathcal T^\op M;E,F)}:\text{$(\delta^\op_\lambda)_*a=\lambda^sa$ for all $\lambda>0$}\right\}.
$$
Then we have the following commutative diagram:
\begin{equation}\label{D:SSS}
\vcenter{\xymatrix{
0\ar[r]&\Psi^{s-1}(E,F)\ar[d]\ar@{^{(}->}[r]&\Psi^s(E,F)\ar[d]\ar[r]^-{\sigma^s}&\Sigma^s(E,F)\ar[r]&0
\\
&\frac{\Psi^{s-1}(E,F)}{\mathcal O^{-\infty}(E,F)}\ar@{^{(}->}[r]&\frac{\Psi^s(E,F)}{\mathcal O^{-\infty}(E,F)}
\\
0\ar[r]&\Sigma^{s-1}(\mathbb TM;E,F)\ar[u]^-{\ev_1}_-\cong\ar[r]^-{t}&\Sigma^s(\mathbb TM;E,F)\ar[u]^-{\ev_1}_-\cong\ar[r]^-{\ev_0}&\Sigma^s(\mathcal T^\op M;E,F)\ar[r]\ar[uu]_-{\nu^*}^-\cong&0
}}
\end{equation}
The bottom row in \eqref{D:SSS} is exact in view of \cite[Lemma~36 and Proposition~37]{EY15v5}.
The vertical arrow labeled $\ev_1$ is onto by definition of the class $\Psi^s(E,F)$.
It follows from \cite[Lemma~32 and Proposition~37]{EY15v5} that these vertical arrows are injective too.
The arrow labeled $t$ in \eqref{D:SSS} is induced by multiplication with the function $t\colon\mathbb TM\to\R$ given by the composition of the source (or target) map $\mathbb TM\to M\times\R$ with the canonical projection onto $\R$.
Note that 
\begin{equation}\label{E:deltat}
\ev_1(t\mathbb K)=\ev_1(\mathbb K)
\qquad\text{and}\qquad
\bigl(\delta^{\mathbb TM}_\lambda\bigr)_*(t\mathbb K)=\lambda t\bigl(\delta_\lambda^{\mathbb TM}\bigr)_*\mathbb K
\end{equation}
for all $\lambda\neq0$ and $\mathbb K\in\mathcal K(\mathbb TM;E,F)$.
These facts permit to define the principal symbol map $\sigma^s$ such that the diagram becomes commutative.
Moreover, the exactness of the sequence in the top row of \eqref{D:SSS} follows from the exactness of the sequence at the bottom.

By conormality, $\mathbb K$ has a Taylor expansion along $\mathcal T^\op M\times\{0\}$
\begin{equation}\label{E:taylor}
\Phi^*\bigl(\mathbb K|_{\mathbb V}\bigr)\sim\sum_{j=0}^\infty\mathbb K_jt^j
\end{equation}
where $\mathbb K_j\in\Sigma^{s-j}(\mathcal T^\op M;E,F)$ and $\mathbb K_jt^j$ is considered as a distribution on $\mathcal T^\op M\times\mathbb R$.
More precisely, for each integer $N$ there exists an integer $j_N$ such that $\Phi^*\bigl(\mathbb K|_{\mathbb V}\bigr)-\sum_{j=0}^{j_N}\mathbb K_jt^j$ is of class $C^N$.
The coefficients are related to the terms in the asymptotic expansion \eqref{E:asymp} of $k=\ev_1(\mathbb K)$ via 
\begin{equation}\label{E:kiKKi}
k_j=\nu^*(\mathbb K_j).
\end{equation}

Let us now turn to the aforementioned subclass of $\Psi^r(E,F)$ for integral $r$.

\begin{definition}\label{D:Sigma2}
Suppose $r\in\Z$.
A principal cosymbol $a\in\mathcal K(\mathcal TM;E,F)$ is said to be \emph{essentially projectively homogeneous of order $r$} if $(\delta_\lambda)_*a=\lambda^ra$ mod $\mathcal K^\infty(\mathcal TM;E,F)$ holds for all $\lambda\neq0$.
Correspondingly, we put
$$
\Sigma^r_\st(E,F):=\left\{a\in\frac{\mathcal K(\mathcal TM;E,F)}{\mathcal K^\infty(\mathcal TM;E,F)}:\text{$(\delta_\lambda)_*a=\lambda^ra$ for all $\lambda\neq0$}\right\},
$$
using the subscript $2$ to indicate this additional $\mathbb Z_2$ symmetry.
Note that 
\begin{equation}\label{E:SSst}
\Sigma^r_\st(E,F)=\bigl\{a\in\Sigma^r(E,F):(\delta_{-1})_*a=(-1)^ra\bigr\}.
\end{equation}
\end{definition}

Let us also introduce the notation 
\begin{equation}\label{E:Pr}
\mathcal P^r(\mathcal TM;E,F):=\bigl\{a\in\mathcal K^\infty(\mathcal TM;E,F):\textrm{$(\delta_\lambda)_*a=\lambda^ra$ for all $\lambda>0$}\bigr\}.
\end{equation}
Clearly, $\mathcal P^r(\mathcal TM;E,F)=0$ if $-r-n\notin\mathbb  N_0$.
For $-r-n\in\mathbb N_0$ these are smooth kernels which are polynomial along each fiber $\mathcal T_xM$.
In particular, the homogeneity in \eqref{E:Pr} remains true for all $\lambda\neq0$.
As in \cite[Lemma~3.8]{DH17} one can show:

\begin{lemma}\label{L:reps}
Suppose $a\in\mathcal K(\mathcal TM;E,F)$ and $r\in\Z$.
Then $a$ is essentially projectively homogeneous of order $r$, i.e., represents an element in $\Sigma^r_\st(E,F)$, if and only if there exist $a_\infty\in\mathcal K^\infty(\mathcal TM;E,F)$, $q\in\mathcal K(\mathcal TM;E,F)$ and $p\in\mathcal P^r(\mathcal TM;E,F)$ such that $a=a_\infty+q$ and
$$
(\delta_\lambda)_*q=\lambda^rq+\lambda^r\log|\lambda|p,\qquad\textrm{for all $\lambda\neq0$.}
$$
Here $q$ is only unique mod $\mathcal P^r(\mathcal TM;E,F)$, but $p$ is without ambiguity.
\end{lemma}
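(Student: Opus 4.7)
The $(\Leftarrow)$ direction is a direct verification: given the claimed decomposition, one computes
$$
(\delta_\lambda)_*a-\lambda^ra=\bigl((\delta_\lambda)_*a_\infty-\lambda^ra_\infty\bigr)+\lambda^r\log|\lambda|\,p,
$$
and both summands are in $\mathcal K^\infty(\mathcal TM;E,F)$, since $a_\infty$ is smooth and $p\in\mathcal P^r(\mathcal TM;E,F)$ is smooth as well. Hence $a$ represents a class in $\Sigma^r_\st(E,F)$.

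For $(\Rightarrow)$, first apply the cited \cite[Lemma~3.8]{DH17}, which treats the weaker condition that $(\delta_\lambda)_*a\equiv\lambda^ra\bmod\mathcal K^\infty$ holds merely for $\lambda>0$. This produces a decomposition $a=\tilde a_\infty+\tilde q$ with $\tilde a_\infty\in\mathcal K^\infty$, $\tilde q\in\mathcal K$, and $p\in\mathcal P^r$, satisfying
$$
(\delta_\lambda)_*\tilde q=\lambda^r\tilde q+\lambda^r\log\lambda\,p\quad\textrm{for all }\lambda>0.
$$
The main task is then to promote this to a representative enjoying the full $\mathbb Z_2$ symmetry encoded in $\Sigma^r_\st$.

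The key observation is that $\mathcal P^r$ consists of polynomial kernels along the fibers, hence the homogeneity $(\delta_\lambda)_*p=\lambda^rp$ automatically extends from $\lambda>0$ to all $\lambda\neq0$; in particular $(\delta_{-1})_*p=(-1)^rp$. Combining this with the assumption $(\delta_{-1})_*a\equiv(-1)^ra\bmod\mathcal K^\infty$ yields $(\delta_{-1})_*\tilde q=(-1)^r\tilde q+s$ for some $s\in\mathcal K^\infty$. I then symmetrize by setting
$$
q:=\tfrac12\bigl(\tilde q+(-1)^r(\delta_{-1})_*\tilde q\bigr),\qquad a_\infty:=a-q,
$$
noting $a_\infty=\tilde a_\infty-\tfrac12(-1)^rs\in\mathcal K^\infty$. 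Using that $\delta_{-1}$ and $\delta_\lambda$ commute, together with $(\delta_{-1})_*p=(-1)^rp$, a short direct computation shows $(\delta_\lambda)_*q=\lambda^rq+\lambda^r\log\lambda\,p$ for $\lambda>0$ and $(\delta_{-1})_*q=(-1)^rq$; writing $\lambda<0$ as $\lambda=-|\lambda|$ and applying these two identities successively then gives the desired formula $(\delta_\lambda)_*q=\lambda^rq+\lambda^r\log|\lambda|\,p$ for all $\lambda\neq0$, where one uses $(-1)^r|\lambda|^r=\lambda^r$ for $r\in\Z$.

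Finally, for the uniqueness assertion, suppose $a=a_\infty+q=a_\infty'+q'$ are two such decompositions, with associated polynomials $p,p'\in\mathcal P^r$. Subtracting the two relations yields $(\delta_\lambda)_*(q-q')=\lambda^r(q-q')+\lambda^r\log|\lambda|(p-p')$, with $q-q'=a_\infty'-a_\infty\in\mathcal K^\infty$. Hence the left hand side is smooth for every $\lambda$; testing the dependence on $\log|\lambda|$ forces $p=p'$, and the remaining identity says $q-q'$ is strictly homogeneous of degree $r$ for all $\lambda\neq0$, i.e., $q-q'\in\mathcal P^r$. The main obstacle is the symmetrization step: it works only because $\mathcal P^r$ is rigid enough to be $\delta_{-1}$-equivariant for free, so the averaging neither spoils the logarithmic term nor introduces a spurious obstruction.
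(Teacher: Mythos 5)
Your overall strategy is the intended one: reduce to \cite[Lemma~3.8]{DH17} for $\lambda>0$ and then restore the $\mathbb Z_2$-symmetry by averaging with $\tfrac12\bigl(\id+(-1)^r(\delta_{-1})_*\bigr)$, which is exactly the averaging operator the paper itself uses in the proof of Proposition~\ref{P:V}. The $(\Leftarrow)$ direction and the uniqueness discussion are fine. There is, however, one step that does not hold as stated: the claim that $(\delta_{-1})_*p=(-1)^rp$ follows ``for free'' because $p$ is polynomial along the fibers. An element of $\mathcal P^r(\mathcal TM;E,F)$ is a polynomial $P$ of weighted degree $d=-r-n$ times a Haar density $\mu$; under $(\delta_{-1})_*$ the density picks up $|\det\dot\delta_{-1}|^{-1}=1$ while $P\circ\delta_{-1}=(-1)^{-r-n}P$, so the automatic identity is $(\delta_{-1})_*p=(-1)^{n+r}p$, not $(-1)^rp$. (Check on $M=\R$ trivially filtered, $n=1$, $r=-1$: $p=|dv|$ satisfies $(\delta_{-1})_*p=p$, whereas $(-1)^rp=-p$.) The paper makes the same assertion in the sentence following \eqref{E:Pr}, so you are faithfully reproducing the text, but the step needs repair when the homogeneous dimension $n$ is odd.

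Concretely, with the correct sign your symmetrization gives, for $\lambda>0$, $(\delta_\lambda)_*q=\lambda^rq+\tfrac12\bigl(1+(-1)^n\bigr)\lambda^r\log\lambda\,p$. For $n$ even this is the desired formula. For $n$ odd the averaging annihilates the logarithmic term instead of reproducing it, and the stated conclusion $(\delta_\lambda)_*q=\lambda^rq+\lambda^r\log|\lambda|\,p$ can only hold if $p=0$ --- indeed the cocycle identity $(\delta_{\lambda\mu})_*=(\delta_\lambda)_*(\delta_\mu)_*$ applied to the displayed formula forces $(\delta_\lambda)_*p=\lambda^rp$ for all $\lambda\neq0$, hence $p=0$ when $n$ is odd. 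So for odd $n$ you must supply the missing fact that the log coefficient produced by \cite[Lemma~3.8]{DH17} vanishes. This is true and not hard: writing $(\delta_{-1})_*\tilde q=(-1)^r\tilde q+s$ with $s\in\mathcal K^\infty$ and comparing $(\delta_{-1})_*(\delta_\lambda)_*\tilde q$ with $(\delta_\lambda)_*(\delta_{-1})_*\tilde q$ yields $(\delta_\lambda)_*s=\lambda^rs-2(-1)^r\lambda^r\log\lambda\,p$ for $\lambda>0$; comparing the weighted-degree-$(-r-n)$ Taylor coefficients of the smooth kernel $s$ on both sides forces $p=0$. With that addendum (and the sign $(-1)^{n+r}$ in place of $(-1)^r$ throughout), your argument is complete.
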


\begin{definition}\label{D:PDO2}
An operator $A\in\Psi^r(E,F)$ of integral order $r\in\Z$ is called \emph{projective} if its Schwartz kernel $k$ admits an extension across the tangent groupoid, $\mathbb K\in\mathcal K(\mathbb TM;E,F)$, which is essentially projectively homogeneous of order $r$, that is, $\ev_1(\mathbb K)=k$, and for all $\lambda\neq0$ we have $(\delta^{\mathbb TM}_\lambda)_*\mathbb K=\lambda^r\mathbb K$ mod $\mathcal K^\infty(\mathbb TM;E,F)$.
We will denote these operators by $\Psi^r_\st(E,F)$.
\end{definition}

\begin{proposition}\label{P:V}
The class $\Psi^r_\st$ has the following properties:
\begin{enumerate}[(a)]
\item\label{P:V:AB}
If $A\in\Psi^r_\st(E,F)$ and $B\in\Psi^l_\st(F,G)$ then $BA\in\Psi^{l+r}_\st(E,G)$, provided at least one of $A$ and $B$ is properly supported.
\item\label{P:V:At}
If $A\in\Psi^r_\st(E,F)$, then $A^t\in\Psi^r_\st(F',E')$.
\item\label{P:V:sigma}
If $A\in\Psi^r_\st(E,F)$, then $\sigma^r(A)\in\Sigma^r_\st(E,F)$. Moreover, 
$$
0\to\Psi^{r-1}_\st(E,F)\to\Psi^r_\st(E,F)\xrightarrow{\sigma^r}\Sigma^r_\st(E,F)\to0
$$
is a natural short exact sequence.
\item\label{P:V:SO}
$\bigcap_{r\in\Z}\Psi^r_\st(E,F)=\mathcal O^{-\infty}(E,F)$, the smoothing operators.
\item\label{P:V:DO}
For $r\in\N_0$ we have $\DO^r(E,F)=\DO(E,F)\cap\Psi^r_\st(E,F)$.
\item\label{P:V:para}
If an operator in $\Psi^r_\st(E,F)$ admits a left parametrix in $\Psi^{-r}(F,E)$, then it also admits a left parametrix $\Psi^{-r}_\st(F,E)$.
An analogous statement holds true for right parametrices.
\item\label{P:V:exp}
Suppose $A\in\Psi^r(E,F)$ and let $\phi^*(k|_V)\sim\sum_{j=0}^\infty k_j$ with $k_j\in\Sigma^{r-j}(E,F)$ denote the asymptotic expansion of its kernel along the diagonal with respect to adapted exponential coordinates.
Then $A\in\Psi^r_\st(E,F)$ if and only if $k_j\in\Sigma^{r-j}_\st(E,F)$ for all $j\in\N_0$.
\end{enumerate}
\end{proposition}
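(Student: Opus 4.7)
The plan is to parallel the established proofs for $\Psi^r(E,F)$ in \cite{EY15v5} and \cite{DH17}, verifying at each step that the additional $\mathbb Z_2$ symmetry built into Definition~\ref{D:PDO2} is preserved. For \itemref{P:V:AB} and \itemref{P:V:At} I would pick essentially projectively homogeneous extensions $\mathbb A$ and $\mathbb B$ of the Schwartz kernels of $A$ and $B$; since groupoid convolution and fiberwise inversion on $\mathbb TM$ are equivariant under $\delta^{\mathbb TM}_\lambda$ for every $\lambda\neq 0$ (not merely $\lambda>0$), the extensions $\mathbb B*\mathbb A$ and the transpose of $\mathbb A$ are projective of the required orders, and the remaining claims are inherited from \cite[Proposition~3.4]{DH17}. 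Statement \itemref{P:V:SO} reduces directly to the analogous fact for $\Psi^r$. For \itemref{P:V:DO}, a differential operator $A\in\DO^r(E,F)$ has Heisenberg principal symbol in $\mathcal U_{-r}(\mathfrak tM)\otimes\hom(E,F)\subseteq\Sigma^r_\st(E,F)$, which scales with sign $(-1)^r$ under $\delta_{-1}$ by construction, and its natural extension to the tangent groupoid inherits this symmetry strictly, not merely modulo smoothing.

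For \itemref{P:V:sigma}, the inclusion $\sigma^r(A)\in\Sigma^r_\st(E,F)$ is immediate, since $\sigma^r(A)=\nu^*\ev_0(\mathbb K)$, the evaluation $\ev_0$ intertwines $\delta^{\mathbb TM}_\lambda$ with $\delta_\lambda$, and $\nu$ commutes with $\delta_\lambda$ on $\mathcal TM$. Surjectivity is obtained by an averaging lift: given $a\in\Sigma^r_\st(E,F)$, choose any $\mathbb K\in\mathcal K(\mathbb TM;E,F)$ essentially homogeneous of order $r$ for $\lambda>0$ with $\ev_0(\mathbb K)=\nu^*a$, as provided by the bottom row of \eqref{D:SSS}, and replace it by
\[
\tilde{\mathbb K}:=\tfrac{1}{2}\bigl(\mathbb K+(-1)^r(\delta^{\mathbb TM}_{-1})_*\mathbb K\bigr).
\]
Then $(\delta^{\mathbb TM}_{-1})_*\tilde{\mathbb K}=(-1)^r\tilde{\mathbb K}$ by construction, while the identity $(\delta_{-1})_*\nu^*a=(-1)^r\nu^*a$ (coming from $a\in\Sigma^r_\st$) forces $\ev_0(\tilde{\mathbb K})=\nu^*a$; the operator with kernel $\ev_1(\tilde{\mathbb K})$ thus lies in $\Psi^r_\st(E,F)$ with symbol $a$. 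Exactness at $\Psi^r_\st$ follows from the factorization $\mathbb K=t\mathbb K'$ (valid when $\ev_0(\mathbb K)=0$) together with \eqref{E:deltat}, showing that $\mathbb K'$ is projective of order $r-1$.

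For \itemref{P:V:para}, given a left parametrix $B\in\Psi^{-r}(F,E)$ with $\sigma^{-r}(B)\cdot\sigma^r(A)=1$, set
\[
b_+:=\tfrac{1}{2}\bigl(\sigma^{-r}(B)+(-1)^r(\delta_{-1})_*\sigma^{-r}(B)\bigr)\in\Sigma^{-r}_\st(F,E).
\]
Since $\sigma^r(A)\in\Sigma^r_\st$, $1\in\Sigma^0_\st$, and $(\delta_{-1})_*$ is an algebra homomorphism for convolution, a brief computation gives $b_+\sigma^r(A)=1$. Lifting $b_+$ to $B_0\in\Psi^{-r}_\st(F,E)$ via \itemref{P:V:sigma} and refining by a Neumann series $B\sim\bigl(\sum_{k\geq 0}(\id-B_0A)^k\bigr)B_0$, summed asymptotically within $\Psi^\bullet_\st$ (which is closed under composition by \itemref{P:V:AB} and, by the same symmetrization trick applied to the usual asymptotic sums in $\Psi^\bullet$, is asymptotically complete), yields the desired parametrix in $\Psi^{-r}_\st$; the right parametrix case is analogous.

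Finally, the ``only if'' direction of \itemref{P:V:exp} follows from the Taylor expansion \eqref{E:taylor} together with \eqref{E:deltat} and \eqref{E:kiKKi}: projective homogeneity of $\mathbb K$ for all $\lambda\neq 0$ forces each Taylor coefficient $\mathbb K_j$, and hence each $k_j=\nu^*\mathbb K_j$, to be projectively homogeneous of order $r-j$. The converse is handled by asymptotic summation of projective lifts of the individual $k_j$ into $\mathbb TM$; this is the main technical obstacle, since one must guarantee that the cumulative smoothing errors do not destroy the projective symmetry. The key observation is that $\mathcal K^\infty(\mathbb TM;E,F)$ is itself stable under $(\delta^{\mathbb TM}_{-1})_*$, so any ambiguity introduced by asymptotic summation lies in a $\mathbb Z_2$-invariant subspace and can be symmetrized without altering $\ev_1$, yielding a projective extension of the desired order.
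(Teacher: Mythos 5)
Your proposal is correct and, for parts \itemref{P:V:AB}, \itemref{P:V:At}, \itemref{P:V:sigma}, \itemref{P:V:SO}, \itemref{P:V:DO}, and the ``only if'' half of \itemref{P:V:exp}, it coincides with the paper's argument: everything rests on the same $\mathbb Z_2$-symmetrization $\mathbb K\mapsto\frac12\bigl(\mathbb K+(-1)^r(\delta^{\mathbb TM}_{-1})_*\mathbb K\bigr)$ and the stability of smooth kernels under $(\delta^{\mathbb TM}_{-1})_*$. The genuine divergence is in \itemref{P:V:para}. The paper exploits that $\ev_1$ is a \emph{multiplicative isomorphism} from $\Sigma^{-r}(\mathbb TM;F,E)$ onto $\Psi^{-r}(F,E)/\mathcal O^{-\infty}(F,E)$, so the full parametrix $B$ (not merely its principal cosymbol) is encoded in a single class $\mathbb L$ with $\mathbb L\mathbb K=1$ in $\Sigma^0(\mathbb TM;E,E)$; symmetrizing $\mathbb L$ then produces the projective parametrix in one step, with no Neumann series and no completeness statement required. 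You instead symmetrize only the order-$(-r)$ cosymbol, lift, and rebuild the parametrix by a Neumann series inside $\Psi^\bullet_\st$; this forces you to invoke asymptotic completeness of the projective class, which you assert ``by the same symmetrization trick'' but do not actually establish. The claim is true — one can symmetrize an essentially homogeneous extension of the ordinary asymptotic sum and check, order by order, that this changes $\ev_1$ only by an element of $\bigcap_N\Psi^{-r-N}=\mathcal O^{-\infty}$ — but it is an extra lemma whose proof is comparable in length to the whole of the paper's argument for \itemref{P:V:para}, so you should either supply it or switch to the groupoid-level symmetrization. Similarly, for the ``if'' direction of \itemref{P:V:exp} the paper avoids any re-summation: it runs the same Taylor-coefficient comparison of $\Phi^*\mathbb K$ and $\Phi^*(\delta^{\mathbb TM}_{-1})_*\mathbb K$ in both directions, using that an essentially homogeneous kernel on $\mathbb TM$ with vanishing Taylor coefficients is smooth; your asymptotic-summation construction works but is the longer route, and the phrase ``without altering $\ev_1$'' should be replaced by ``altering $\ev_1$ only by a smoothing kernel.''
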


\begin{proof}
Parts \itemref{P:V:AB} and \itemref{P:V:At} can be proved exactly as in \cite{EY15v5}, see also \cite[Proposition~3.4]{DH17}.
Indeed, all maps in the commutative diagram~\eqref{D:KTTM} are multiplicative.

To see \itemref{P:V:sigma} put
$$
\Sigma^r_\st(\mathbb TM;E,F):=\left\{\mathbb K\in\frac{\mathcal K(\mathbb TM;E,F)}{\mathcal K^\infty(\mathbb TM;E,F)}:\text{$(\delta^{\mathbb TM}_\lambda)_*\mathbb K=\lambda^r\mathbb K$ for all $\lambda\neq0$}\right\}
$$
and note that 
\begin{equation}\label{E:SrTTM}
\Sigma^r_2(\mathbb TM;E,F)=\bigl\{\mathbb K\in\Sigma^r(\mathbb TM;E,F):(\delta^{\mathbb TM}_{-1})_*\mathbb K=(-1)^r\mathbb K\bigr\}.
\end{equation}
Moreover, introduce
$$
\Sigma^r_\st(\mathcal T^\op M;E,F):=\left\{a\in\frac{\mathcal K(\mathcal T^\op M;E,F)}{\mathcal K^\infty(\mathcal T^\op M;E,F)}:\text{$(\delta^\op_\lambda)_*a=\lambda^ra$ for all $\lambda\neq0$}\right\},
$$
and note that
\begin{equation}\label{E:SrTopM}
\Sigma^r_\st(\mathcal T^\op M;E,F)=\bigl\{a\in\Sigma^r(\mathcal T^\op M;E,F):(\delta^\op_{-1})_*a=(-1)^ra\bigr\}.
\end{equation}
Using the commutativity of the left square in \eqref{D:KTTM} and \eqref{E:deltat} we see that \eqref{D:SSS} restricts to a commutative diagram:
\begin{equation}\label{D:sSSS}
\vcenter{\xymatrix{
0\ar[r]&\Psi_\st^{r-1}(E,F)\ar[d]\ar@{^{(}->}[r]&\Psi^r_\st(E,F)\ar[d]\ar[r]^-{\sigma^r}&\Sigma^r_\st(E,F)\ar[r]&0
\\
&\frac{\Psi_\st^{r-1}(E,F)}{\mathcal O^{-\infty}(E,F)}\ar@{^{(}->}[r]&\frac{\Psi_\st^r(E,F)}{\mathcal O^{-\infty}(E,F)}
\\
0\ar[r]&\Sigma_\st^{r-1}(\mathbb TM;E,F)\ar[u]^-{\ev_1}_-\cong\ar[r]^-{t}&\Sigma_\st^r(\mathbb TM;E,F)\ar[u]^-{\ev_1}_-\cong\ar[r]^-{\ev_0}&\Sigma^r_\st(\mathcal T^\op M;E,F)\ar[r]\ar[uu]_-{\nu^*}^-\cong&0
}}
\end{equation}
Using \eqref{E:SrTopM}, \eqref{E:SrTTM}, and the averaging operator $\frac12\bigl(\id+(-1)^{-r}(\delta^{\mathbb TM}_{-1})_*\bigr)$, one readily sees that the bottom row in the diagram above is exact, for the same is true in \eqref{D:SSS}.
Consequently, the top row in \eqref{D:sSSS} is also exact, whence \itemref{P:V:sigma}.

Part \itemref{P:V:SO} follows immediately from $\bigcap_{r\in\Z}\Psi^r(E,F)=\mathcal O^{-\infty}(E,F)$, see \cite[Corollary~53]{EY15v5} or \cite[Proposition~3.4(c)]{DH17}, for we have the obvious inclusions $\mathcal O^{-\infty}(E,F)\subseteq\Psi^r_\st(E,F)\subseteq\Psi^r(E,F)$.

The proof of \cite[Proposition~3.4(f)]{DH17} actually shows \itemref{P:V:DO}, see also \cite[Section~10.3]{EY15v5}.

To see \itemref{P:V:para}, consider $A\in\Psi^r_\st(E,F)$ and $B\in\Psi^{-r}(F,E)$ such that $BA-\id$ is a smoothing operator.
Choose $\mathbb K\in\Sigma^r_\st(\mathbb TM;E,F)$ such that $\ev_1(\mathbb K)$ represents $A$ mod smoothing operators.
Moreover, choose $\mathbb L\in\Sigma^{-r}(\mathbb TM;F,E)$ such that $\ev_1(\mathbb L)$ represents $B$ mod smoothing operators.
Since $\ev_1$ induces a multiplicative isomorphism as indicated in \eqref{D:SSS}, we conclude $\mathbb L\mathbb K=1$ in $\Sigma^0(\mathbb TM;E,E)$.
Consider $\tilde{\mathbb L}:=\frac12\bigl(\mathbb L+(-1)^r(\delta^{\mathbb TM}_{-1})_*\mathbb L\bigr)\in\Sigma^{-r}_\st(\mathbb TM;\mathbb F,\mathbb E)$.
In view of $1=(\delta^{\mathbb TM}_{-1})_*1=(\delta^{\mathbb TM}_{-1})_*(\mathbb L\mathbb K)=\bigl((\delta^{\mathbb TM}_{-1})_*\mathbb L\bigr)\bigl((\delta^{\mathbb TM}_{-1})_*\mathbb K\bigr)=\bigl((\delta^{\mathbb TM}_{-1})_*\mathbb L\bigr)\bigl((-1)^r\mathbb K\bigr)=\bigl((-1)^r(\delta^{\mathbb TM}_{-1})_*\mathbb L\bigr)\mathbb K$, we conclude $\tilde{\mathbb L}\mathbb K=1$ in $\Sigma^0_\st(\mathbb TM;E,E)$.
Hence, $\ev_1(\tilde{\mathbb L})$ gives rise to a left parametrix $\tilde B\in\Psi^{-r}_\st(F,E)$ such that $\tilde BA-\id$ is a smoothing operator.

To see \eqref{P:V:exp}, consider $\mathbb K\in\Sigma^r(\mathbb TM;E,F)$ and suppose the kernel $k=\ev_1(\mathbb K)$ represents the operator $A\in\Psi^r(E,F)$.
From \eqref{E:taylor} we get
\begin{equation}\label{E:taylor2}
\Phi^*\bigl((\delta_{-1}^{\mathbb TM})_*\mathbb K|_{\mathbb V}\bigr)\sim\sum_{j=0}^\infty(-1)^j(\delta^\op_{-1})_*(\mathbb K_j)t^j,
\end{equation}
since we have $\delta_\lambda^{\mathbb TM}(\Phi(g,t))=\Phi\bigl(\delta^\op_\lambda(g),t/\lambda\bigr)$ for all $\lambda\neq0$.
Now, $A\in\Psi^r_2(E,F)$ iff $(\delta_{-1}^{\mathbb TM})_*\mathbb K=(-1)^r\mathbb K$ in $\Sigma^r(\mathbb TM;E,F)$, see \eqref{E:SrTTM}.
Comparing \eqref{E:taylor} with \eqref{E:taylor2}, we conclude that this is the case iff $(\delta^\op_{-1})_*\mathbb K_j=(-1)^{r-j}\mathbb K_j$ in $\Sigma^{r-j}(\mathcal T^\op M;E,F)$, for all $j$.
Using \eqref{E:SrTopM}, we see that this holds iff $\mathbb K_j\in\Sigma^{r-j}_2(\mathcal T^\op M;E,F)$, for all $j$.
In view of \eqref{E:kiKKi}, this is in turn equivalent to $k_j\in\Sigma^{r-j}_2(E,F)$, for all $j$.
\end{proof}

\begin{remark}
Proposition~\ref{P:V}\itemref{P:V:At} implies that the class $\Psi^r_\st$ is invariant under taking formal adjoints too.
More precisely, if $A\in\Psi^r_\st(E,F)$, then $A^*\in\Psi^r_\st(F,E)$ where the formal adjoint is with respect to inner products of the form \eqref{E:L2}.
\end{remark}

\section{Heat kernel asymptotics}\label{S:heat}

The aim of this section is to prove Theorem~\ref{T:heat}. 
To this end let $E$ be a vector bundle over a closed filtered manifold $M$, and suppose $A\in\DO^r(E)$ is a differential Rockland operator of even Heisenberg order $r>0$ which is formally selfadjoint and non-negative with respect to the $L^2$ inner product induced by a volume density $dx$ on $M$ and a fiberwise Hermitian metric $h$ on $E$, see \eqref{E:L2}.

We turn $M\times\R$ into a filtered manifold by putting
$$
T^p(M\times\R):=\begin{cases}\pi_1^*T^pM&\textrm{if $p>-r$, and}\\\pi_1^*T^pM\oplus\pi_2^*T\R&\textrm{if $p\leq-r$.}\end{cases}
$$
Here $\pi_1\colon M\times\R\to M$ and $\pi_2\colon M\times\R\to\R$ denote the canonical projections, and we identify $T(M\times\R)=\pi_1^*TM\oplus\pi_2^*T\R$.
We will regard $A$ and $\frac\partial{\partial t}$ as operators over $M\times\R$ acting on sections of the pull back bundle $\tilde E:=\pi_1^*E$.
The fiberwise Hermitian metric $h$ on $E$ induces a fiberwise Hermitian metric $\tilde h:=\pi_1^*h$ on $\tilde E$.
Furthermore, the volume density $dx$ on $M$ and the standard volume density $dt$ on $\R$ provide a volume density $dxdt$ on $M\times\R$.
We will use the $L^2$ inner product on sections of $\tilde E$ induced by $dxdt$ and $\tilde h$, see \eqref{E:L2}.

This filtration on $M\times\R$ is motivated by the fact that the heat operator $A+\frac\partial{\partial t}$ becomes a Rockland operator.
More precisely, we have:

\begin{lemma}\label{L:rockland}
The differential operator $A+\frac\partial{\partial t}$ is a Rockland operator of Heisenberg order $r$.
The same is true for $(A+\frac\partial{\partial t})^*=A-\frac\partial{\partial t}$.
\end{lemma}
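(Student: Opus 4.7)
The plan is to identify the osculating structure on $M\times\R$ at the symbolic level, use the fact that $\partial_t$ becomes central of degree $-r$, parametrize the irreducible representations of the osculating group accordingly, and then reduce the Rockland condition for $A\pm\partial_t$ to the Rockland condition, formal selfadjointness, and non-negativity already assumed for $A$.

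First I would analyze the osculating algebra. With the filtration as defined, the vector field $\partial_t$ sits in $T^{-r}(M\times\R)$ and has Heisenberg order $r$. Since $\partial_t$ commutes with every vector field pulled back from $M$, its Levi bracket with any other class vanishes, so the osculating algebra decomposes as a direct sum of graded Lie algebras
\[
\mathfrak t_{(x,t)}(M\times\R)=\mathfrak t_xM\oplus\R\cdot\partial_t,
\]
with the extra summand central and purely of degree $-r$. Correspondingly the osculating group is $\mathcal T_{(x,t)}(M\times\R)=\mathcal T_xM\times\R$, and the scaling automorphism $\dot\delta_{\lambda,(x,t)}$ restricts to $\dot\delta_{\lambda,x}$ on $\mathfrak t_xM$ and to multiplication by $\lambda^{r}$ on $\R\cdot\partial_t$. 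Since $A$, regarded on $M\times\R$, involves only $M$-directions and has Heisenberg order $r$ there, and since $\partial_t$ has Heisenberg order $r$ by the choice of filtration, $A+\partial_t\in\DO^r(\tilde E)$ and its Heisenberg principal symbol at $(x,t)$ equals
\[
\sigma^r_{(x,t)}\!\bigl(A+\tfrac\partial{\partial t}\bigr)=\sigma^r_x(A)+\partial_t\;\in\;\mathcal U_{-r}\bigl(\mathfrak t_{(x,t)}(M\times\R)\bigr)\otimes\eend(E_x).
\]

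Next I would classify the non-trivial irreducible unitary representations $\pi$ of $\mathcal T_xM\times\R$. By Schur's lemma applied to the central $\R$-factor, every such $\pi$ has the form $\pi(g,s)=\pi_1(g)\,e^{i\tau s}$ for some irreducible unitary representation $\pi_1$ of $\mathcal T_xM$ and some $\tau\in\R$; and $\pi$ is non-trivial iff $(\pi_1,\tau)\neq(\mathrm{triv},0)$. Under this, the principal symbol acts on smooth vectors as
\[
\pi\bigl(\sigma^r_{(x,t)}(A+\tfrac\partial{\partial t})\bigr)=\pi_1\bigl(\sigma^r_x(A)\bigr)+i\tau\cdot\id.
\]
I would then check injectivity case by case. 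If $\pi_1$ is non-trivial, then $\pi_1(\sigma^r_x(A))$ is already injective on smooth vectors by the Rockland assumption on $A$; adding the imaginary scalar $i\tau$ does not destroy injectivity since, by formal selfadjointness of $A$, the operator $\pi_1(\sigma^r_x(A))$ is symmetric on smooth vectors, so $i\tau$ lies in its resolvent set whenever $\tau\neq 0$, and the kernel persists to be zero when $\tau=0$. If $\pi_1$ is trivial then non-triviality of $\pi$ forces $\tau\neq 0$, so the symbol becomes multiplication by $i\tau\neq 0$, which is obviously injective. This establishes the Rockland condition for $A+\partial_t$.

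For $(A+\partial_t)^*=A-\partial_t$, the same argument goes through verbatim, with $i\tau$ replaced by $-i\tau$; the asymmetry is irrelevant because the case distinction only uses $\tau\neq0$ versus $\tau=0$. The only point that requires any care, and which I would single out as the main technical obstacle, is the claim that $i\tau$ lies in the resolvent set of $\pi_1(\sigma^r_x(A))$ on smooth vectors when $\tau\neq 0$; this needs the symbol to be symmetric on $\mathcal H_\infty$, which follows from $\sigma^r_x(A^*)=\sigma^r_x(A)^*$ together with the assumed formal selfadjointness $A^*=A$. (Non-negativity of $A$ is not actually needed for the Rockland condition itself; it would be needed if one wanted the stronger statement that $\pi_1(\sigma^r_x(A))$ has spectrum in $[0,\infty)$, but for this lemma symmetry alone suffices.)
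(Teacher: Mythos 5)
Your proof is correct and takes essentially the same route as the paper: decompose the osculating algebra as $\mathfrak t_xM\oplus\R$ with the extra summand central of degree $-r$, use Schur's lemma to reduce the action of the central factor to a scalar $i\tau$, and exploit formal selfadjointness of $A$ to conclude injectivity of $\pi_1(\sigma^r_x(A))+i\tau$. One phrase is an overclaim that you should fix: for a merely symmetric (not necessarily essentially self-adjoint) operator $a$ on $\mathcal H_\infty$, it is not automatic that $i\tau$ lies in the \emph{resolvent set}; what symmetry actually gives, and what you need, is the identity $\|(a+i\tau)v\|^2=\|av\|^2+\tau^2\|v\|^2$, which yields injectivity for $\tau\neq0$ directly and is precisely the scalar specialization of the identity $\|(a+b)v\|^2=\|av\|^2+\|bv\|^2$ (from $(a-b)(a+b)=a^*a+b^*b$ with $a^*=a$, $b^*=-b$) that the paper uses to handle both cases at once. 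Your remark that non-negativity of $A$ is not used in this lemma is also accurate; the paper's appeal to ``positivity'' refers to positivity of the Hilbert-space norm, not to $A\geq0$.
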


\begin{proof}
Clearly, $\frac\partial{\partial t}\in\Gamma^\infty(T^{-r}(M\times\R))$, hence $\frac\partial{\partial t}$ is a differential operator of Heisenberg order at most $r$ on $M\times\R$.
Since $\pi_1^*T^pM\subseteq T^p(M\times\R)$, the operator $A$ has Heisenberg order at most $r$ when considered on $M\times\R$.
Consequently, $A+\frac\partial{\partial t}$ has Heisenberg order at most $r$.

For $(x,t)\in M\times\R$, the canonical projections induce a canonical isomorphism of osculating algebras, 
\begin{equation}\label{E:tMR}
\mathfrak t_{(x,t)}(M\times\R)=\mathfrak t_xM\oplus\R,
\end{equation} 
where $\R$ is a central ideal in homogeneous degree $-r$.
Correspondingly, we obtain a canonical isomorphism of osculating groups,
\begin{equation}\label{E:TMR}
\mathcal T_{(x,t)}(M\times\R)=\mathcal T_xM\times\R,
\end{equation}
such that the parabolic dilation becomes
\begin{equation}\label{E:deltaMR}
\delta^{\mathcal T(M\times\R)}_{\lambda,(x,t)}(g,\tau)=\bigl(\delta^{\mathcal TM}_{\lambda,x}(g),\lambda^r\tau\bigr),
\end{equation}
where $g\in\mathcal T_xM$, $\tau\in\R$, and $\lambda\neq0$.

Let $\sigma^r_x(A)\in\mathcal U_{-r}(\mathfrak t_xM)\otimes\eend(E_x)$ denote the Heisenberg principal symbol of $A$.
For the Heisenberg principal symbol of $A+\tfrac\partial{\partial t}$ we clearly have
\begin{equation}\label{E:saddt}
\sigma^r_{(x,t)}(A+\tfrac\partial{\partial t})=\sigma^r_x(A)+T,
\end{equation}
via the canonical isomorphism induced by \eqref{E:tMR},
$$
\mathcal U_{-r}\bigl(\mathfrak t_{(x,t)}(M\times\R)\bigr)\otimes\eend\bigl(\tilde E_{(x,t)}\bigr)
=\bigl(\mathcal U(\mathfrak t_xM)\otimes\R[T]\bigr)_{-r}\otimes\eend(E_x).
$$
Here $\R[T]=\mathcal U(\R)$ denotes the polynomial algebra in one variable $T$ of homogeneous degree $-r$.

To verify the Rockland condition, consider a non-trivial irreducible unitary representation of the osculating group, $\pi\colon\mathcal T_{(x,t)}(M\times\R)\to U(\mathcal H)$, on a Hilbert space $\mathcal H$.
Since the factor $\R$ in \eqref{E:tMR} is central, $b:=\pi(T)$ acts by a scalar on $\mathcal H$, see \cite[Theorem~5 in Appendix~V]{K04}.
Hence, restricting $\pi$ via \eqref{E:TMR}, we obtain an irreducible representation $\bar\pi\colon\mathcal T_xM\to U(\mathcal H)$.
Putting $a:=\bar\pi(\sigma^r_x(A))$, \eqref{E:saddt} gives
\begin{equation}\label{E:psaddt}
\pi(\sigma^r_x(A+\tfrac\partial{\partial t}))=a+b.
\end{equation}
Let $\mathcal H_\infty$ denote the space of smooth vectors for the representation $\pi$, and note that this coincides with the space of smooth vectors of $\bar\pi$.
By unitarity, and since $A$ is assumed to be formally selfadjoint, we have $a^*=a$ and $b^*=-b$, see~\eqref{E:sAB}, hence
$$
(a-b)(a+b)=a^*a+b^*b.
$$
By positivity, it thus suffices to show that $a$ or $b$ acts injectively on $\mathcal H_\infty$, see \eqref{E:psaddt}.
If $\bar\pi$ is non-trivial, then $a$ acts injectively for $A$ is assumed to satisfy the Rockland condition.
If $\bar\pi$ is trivial, then $b$ acts injectively, for it has to be a non-trivial scalar.
\footnote{$\mathcal H$ has to be one dimensional in this case.}
\end{proof}

In view of \cite[Theorem~3.13]{DH17} and Lemma~\ref{L:rockland}, the operator $A+\frac\partial{\partial t}$ admits a properly supported parametrix $\tilde Q\in\Psi_\prop^{-r}(\tilde E)$, see also \cite[Remarks~3.18 and 3.19]{DH17}.
Hence, there exist smoothing operators $R_1$ and $R_2$ over $M\times\R$ such that 
\begin{equation}\label{E:parametrix}
\tilde Q(A+\tfrac\partial{\partial t})=\id+R_1
\qquad\text{and}\qquad
(A+\tfrac\partial{\partial t})\tilde Q=\id+R_2.
\end{equation}
By the definition of the class $\Psi^{-r}_\prop(\tilde E)$, see Section~\ref{S:calculus} or \cite[Definition~3.2]{DH17}, $\tilde Q$ has a properly supported Schwartz kernel, $k_{\tilde Q}\in\Gamma_\prop^{-\infty}(\tilde E\boxtimes\tilde E')$, whose wave front set is contained in the conormal of the diagonal.
In particular, $k_{\tilde Q}$ is smooth away from the diagonal.
Moreover, $k_{\tilde Q}$ admits an asymptotic expansion along the diagonal.

To describe the asymptotic expansion of the kernel $k_{\tilde Q}$, we will use adapted exponential coordinates for $M\times\R$ which are compatible with the product structure and the translational invariance of the operator $A+\frac\partial{\partial t}$.
We start with adapted exponential coordinates for $M$ as described in Section~\ref{S:calculus} and summarized in the commutative diagram \eqref{D:expcoor}.
The exponential coordinates on $M\times\R$ will be defined such that the following diagram commutes:
$$
\xymatrix{
\bigl(\eend(\tilde\pi^*\tilde E)\otimes\Omega_{\tilde\pi}\bigr)|_{\tilde U}\ar[d]\ar[r]^-{\tilde\phi}&(\tilde E\boxtimes\tilde E')|_{\tilde V}\ar[d]
\\
**[l]\mathcal T(M\times\R)\supseteq\tilde U\ar[r]^-{\tilde\varphi}\ar[d]_-{\tilde\pi|_{\tilde U}}&**[r]\tilde V\subseteq(M\times\R)\times(M\times\R)\ar[d]^-{\pr_1}
\\
M\times\R\ar@/_/[u]_-{\tilde o}\ar@{=}[r]&M\times\R.\ar@/^/[u]^-{\tilde\Delta}
}
$$
Here $\tilde\pi\colon\mathcal T(M\times\R)\to M\times\R$ denotes the bundle projection, $\tilde o$ is the zero section, and $\tilde\Delta$ denotes the diagonal mapping.
Moreover,
\begin{align}
\notag
\tilde V&:=\{(x,s;y,t)\in(M\times\R)\times(M\times\R):(x,y)\in V\}
\\\notag
\tilde U_{(x,s)}&:=\{(g,t)\in\mathcal T_{(x,s)}(M\times\R)=\mathcal T_xM\times\R:g\in U\}
\\\label{E:tpp}
\tilde\varphi_{(x,s)}(g,t)&:=(x,s;\varphi_x(g),s-t)
\\\label{E:tvpvp}
\tilde\phi_{(x,s)}(g,t)&:=\phi_x(g)
\end{align}
where $(x,s)\in M\times\R$ and $(g,t)\in\mathcal T_{(x,s)}(M\times\R)=\mathcal T_xM\times\R$.
Note that these are exponential coordinates as described in Section~\ref{S:calculus} associated with the splitting of the filtration $T(M\times\R)\cong\mathfrak t(M\times\R)$ induced by the splitting $TM\cong\mathfrak tM$ and the linear connection on $T(M\times\R)=\pi_1^*TM\oplus\pi_2^*T\R$ induced from the linear connection on $TM$ and the trivial connection on $T\R$.

Pulling back the Schwartz kernel of $\tilde Q$ with $\tilde\phi$, we obtain a distributional section $\tilde\phi^*(k_{\tilde Q}|_{\tilde V})$ of the vector bundle $\eend(\tilde\pi^*\tilde E)\otimes\Omega_{\tilde\pi}$ over $\tilde U$.
According to Proposition~\ref{P:V}\itemref{P:V:DO}\&\itemref{P:V:para}, the parametrix $\tilde Q$ may be assumed to be in the class $\Psi^{-r}_\st(\tilde E)$.
Hence, see Proposition~\ref{P:V}\itemref{P:V:exp}, we have an asymptotic expansion of the form
\begin{equation}\label{E:qqj}
\tilde\phi^*\bigl(k_{\tilde Q}|_{\tilde V}\bigr)\sim\sum_{j=0}^\infty\tilde q_j
\end{equation}
where $\tilde q_j\in\Sigma^{-r-j}_\st(\tilde E)$.
More precisely, for every integer $N$ there exists an integer $j_N$ such that $\tilde\phi^*(k_{\tilde Q}|_{\tilde V})-\sum_{j=0}^{j_N}\tilde q_j$ is of class $C^N$ on $\tilde U$.
According to Lemma~\ref{L:reps}, we may fix representatives $\tilde q_j\in\mathcal K\bigl(\mathcal T(M\times\R);\tilde E\bigr)$ satisfying
\begin{equation}\label{E:homog}
\bigl(\delta^{\mathcal T(M\times\R)}_\lambda\bigr)_*\tilde q_j=\lambda^{-r-j}\tilde q_j-\lambda^{-r-j}\log|\lambda|\,\tilde p_j,\qquad\lambda\neq0,
\end{equation} 
where $\tilde p_j\in\mathcal P^{-r-j}\bigl(\mathcal T(M\times\R);\tilde E\bigr)$ is smooth and strictly homogeneous,
\begin{equation*}
\bigl(\delta^{\mathcal T(M\times\R)}_\lambda\bigr)_*\tilde p_j=\lambda^{-r-j}\tilde p_j,\qquad\lambda\neq0.
\end{equation*}
The representative $\tilde q_j$ satisfying \eqref{E:homog} is unique mod $\mathcal P^{-r-j}\bigl(\mathcal T(M\times\R);\tilde E\bigr)$.
The logarithmic terms $\tilde p_j$, however, are without ambiguity.

The heat semi group $e^{-tA}$ permits to invert the heat operator $A+\frac\partial{\partial t}$ over $M\times\R$.
Following \cite[Section~5]{BGS84} we let $C_+(\R,L^2(E))$ denote the space of continuous functions $\psi\colon\R\to L^2(E)$ for which there exists $t_0\in\R$ such that $\psi(t)=0$ for all $t\leq t_0$.
We consider the operator $Q\colon C_+(\R,L^2(E))\to C_+(\R,L^2(E))$ defined by
\begin{equation}\label{E:tQ}
(Q\psi)(s):=\int_{-\infty}^se^{-(s-t)A}\psi(t)\,dt,\qquad\psi\in C_+(\R,L^2(E)).
\end{equation}
Since $e^{-tA}$ is strongly continuous, the integrand $e^{-(s-t)A}\psi(t)$ depends continuously on $t\in(-\infty,s]$, hence the integral converges and $Q\psi\in C_+(\R,L^2(E))$, cf.\ \cite[Lemma~3.7 in Section~III\S3.1]{K95}.
Since $e^{-tA}$ is a contraction, the estimate $\|(Q\psi)(s)\|\leq|s-t_0|\sup_{t\in[t_0,s]}\|\psi(t)\|$ holds for all $\psi\in C_+(\R,L^2(E))$ which are supported on $[t_0,\infty)$.
This shows that $Q$ is continuous, when $C_+(\R,L^2(E))=\varinjlim C_{[t_0,\infty)}(\mathbb R,L^2(E))$ is equipped with the inductive limit topology over $t_0\in\mathbb R$, and $C_{[t_0,\infty)}(\mathbb R,L^2(E))$ denotes the subspace of continuous maps which are supported on $[t_0,\infty)$ carrying the topology of uniform convergence on compact subsets.
Composing $Q$ with the continuous inclusions $\Gamma_c^\infty(\tilde E)\subseteq C_+(\R,L^2(E))$ and $C_+(\R,L^2(E))\subseteq\Gamma^{-\infty}(\tilde E)$, we may thus regard it as a continuous operator, $Q\colon\Gamma^\infty_c(\tilde E)\to\Gamma^{-\infty}(\tilde E)$.
By the Schwartz kernel theorem, $Q$ has a distributional kernel, $k_Q\in\Gamma^{-\infty}(\tilde E\boxtimes\tilde E')$, that is,
\begin{equation}\label{E:ktQ}
\langle\phi,Q\psi\rangle=\int_{(x,s;y,t)\in(M\times\R)\times(M\times\R)}\phi(x,s)k_Q(x,s;y,t)\psi(y,t)
\end{equation}
for $\psi\in\Gamma^\infty_c(\tilde E)$ and $\phi\in\mathcal D(\tilde E)=\Gamma^\infty_c(\tilde E')$.
From \eqref{E:ddte-tA} and \eqref{E:tQ} we get
\begin{equation}\label{E:tQAddt}
(A+\tfrac\partial{\partial t})Q\psi=\psi=Q(A+\tfrac\partial{\partial t})\psi
\end{equation}
in the distributional sense, for all $\psi\in\Gamma^\infty_c(\tilde E)$, cf.\ \cite[Equation~(5.12)]{BGS84}.

Comparing \eqref{E:tQAddt} with \eqref{E:parametrix} we obtain, as in \cite[Section~5]{BGS84},
\begin{equation}\label{E:QtQ}
(\id+R_1)Q\psi=\tilde Q\psi=Q(\id+R_2)\psi
\end{equation}
for all $\psi\in\Gamma^\infty_c(\tilde E)$.
Note here, that $\tilde Q$ is properly supported, defining continuous operators on $\Gamma^{-\infty}(\tilde E)$ and $\Gamma^\infty_c(\tilde E)$.
Moreover, $R_1$ and $R_2$ are properly supported smoothing operators, defining continuous operators $\Gamma^{-\infty}_c(\tilde E)\to\Gamma^\infty_c(\tilde E)$ and $\Gamma^{-\infty}(\tilde E)\to\Gamma^\infty(\tilde E)$.
The equality on the left hand side in \eqref{E:QtQ} implies that $Q$ maps $\Gamma^\infty_c(\tilde E)$ continuously into $\Gamma^\infty(\tilde E)$.
Hence, $QR_2$ is a smoothing operator, for it maps $\Gamma^{-\infty}_c(\tilde E)$ continuously into $\Gamma^\infty(\tilde E)$.
Using the equality on the right hand side of \eqref{E:QtQ} we conclude that $\tilde Q-Q$ is a smoothing operator.

Since $Q$ differs from $\tilde Q$ by a smoothing operator, $k_Q$ is smooth away from the diagonal and \eqref{E:qqj} gives an asymptotic expansion 
\begin{equation}\label{E:asymtQ}
\tilde\phi^*\bigl(k_Q|_{\tilde V}\bigr)\sim\sum_{j=0}^\infty\tilde q_j.
\end{equation}
Using Taylor's theorem, we may, by adding terms in $\mathcal P^{-r-j}\bigl(\mathcal T(M\times\R);\tilde E\bigr)$ to $\tilde q_j$, assume that for every integer $N$ there exists an integer $j_N$ such that
\begin{equation}\label{E:asymtQO}
\left(\tilde\phi^*\bigl(k_Q|_{\tilde V}\bigr)-\sum_{j=0}^{j_N}\tilde q_j\right)_{(x,s)}(\tilde g)=O\bigl(|\tilde g|^N\bigr),
\end{equation}
as $\tilde g\to\tilde o_{(x,s)}\in\mathcal T_{(x,s)}(M\times\R)$, uniformly for $(x,s)$ in compact subsets of $M\times\R$.
Here $|-|$ denotes a fiberwise homogeneous norm on $\mathcal T(M\times\R)$.
Comparing \eqref{E:kt}, \eqref{E:tQ}, and \eqref{E:ktQ}, we find 
\begin{equation}\label{E:kQkt}
k_Q(x,s;y,s-t)=\begin{cases}k_t(x,y)dt&\textrm{for $t>0$, and}\\0&\textrm{for $t<0$.}\end{cases}
\end{equation}
In particular, $k_t(x,y)$ is smooth on $M\times M\times(0,\infty)$.
Furthermore, $\tilde\phi^*_{(x,s)}(k_Q|_{\tilde V})$ vanishes on $\mathcal T_xM\times(-\infty,0)\subseteq\mathcal T_xM\times\R=\mathcal T_{(x,s)}(M\times\R)$ and does not depend on $s$, see \eqref{E:tpp} and \eqref{E:tvpvp}.
Combining this with \eqref{E:asymtQO} and \eqref{E:homog}, we conclude that 
\begin{equation}\label{E:vol}
\textrm{$\tilde q_{j,x}:=\tilde q_{j,(x,s)}$ vanishes on $\mathcal T_xM\times(-\infty,0)\subseteq\mathcal T_{(x,s)}(M\times\R)$}
\end{equation}
and does not depend on $s\in\R$, for every $j\in\N_0$ and all $x\in M$.
Note here that $\mathcal T_xM\times(-\infty,0)$ is invariant under scaling, see \eqref{E:deltaMR}.
Using \eqref{E:vol} and \eqref{E:homog}, we see that $\tilde p_{j,(x,s)}$ vanishes on $\mathcal T_xM\times(-\infty,0)$.
Since $\tilde p_{j,(x,s)}$ is polynomial, we conclude $\tilde p_{j,(x,s)}=0$ for every $j\in\N_0$ and all $(x,s)\in M\times\R$.
This shows that there are no log terms in the asymptotic expansion \eqref{E:asymtQ}.
In other words,
\begin{equation}\label{E:ltqj}
\bigl(\delta^{\mathcal T(M\times\R)}_\lambda\bigr)_*\tilde q_j=\lambda^{-r-j}\tilde q_j,\qquad\lambda\neq0.
\end{equation}

Restricting \eqref{E:asymtQO} we obtain, using \eqref{E:tpp} and \eqref{E:deltaMR},
\begin{equation}\label{E:asymtQOxx}
k_Q(x,s;x,s-t)=\sum_{j=0}^{j_N}\tilde\phi_{(x,s)}\bigl(\tilde q_{j,(x,s)}(o_x,t)\bigr)+O\bigl(|t|^{N/r}\bigr),
\end{equation}
as $|t|\to0$, uniformly in $x$.
Using \eqref{E:ltqj}, \eqref{E:tvpvp}, \eqref{E:deltaMR}, and \eqref{E:deltamu}, where $n$ has to be replaced by the homogeneous dimension $n+r$ of $M\times\R$, we obtain
\begin{equation}\label{E:homogt}
\tilde\phi_{(x,s)}\bigl(\tilde q_{j,(x,s)}(o_x,t)\bigr)=t^{(j-n)/r}\tilde\phi_{(x,s)}\bigl(\tilde q_{j,(x,s)}(o_x,1)\bigr)
\end{equation}
for all $x\in M$, $s\in\R$ and $t>0$.
In Equation~\eqref{E:homogt} we are using (left) trivialization of the $1$-density bundle of the osculating group $\mathcal T_{(x,s)}(M\times\mathbb R)$ to identify the two sides.
Hence, defining $q_j\in\Gamma^\infty\bigl(\eend(E)\otimes|\Lambda_M|\bigr)$ by 
\begin{equation}\label{E:qj}
q_j(x)dt:=\tilde\phi_{(x,s)}\bigl(\tilde q_{j,(x,s)}(o_x,1)\bigr),
\end{equation} 
we obtain from \eqref{E:kQkt}, \eqref{E:asymtQOxx}, \eqref{E:homogt}, and \eqref{E:qj}
$$
k_t(x,x)=\sum_{j=0}^{N-1}t^{(j-n)/r}q_j(x)+O\bigl(t^{(N-n)/r}\bigr),
$$
as $t\searrow0$, uniformly in $x$.

Using $\lambda=-1$ in \eqref{E:ltqj} we see that $\tilde q_{j,(x,s)}(o_x,t)=0$ for all odd $j$ and $t\neq0$, see \eqref{E:deltaMR} and \eqref{E:deltamu}.
Hence, $q_j(x)=0$ for all odd $j$, see \eqref{E:qj}.

\begin{remark}\label{R:loccomp}
The asymptotic term $q_j\in\Gamma^\infty\bigl(\eend(E)\otimes|\Lambda_M|\bigr)$ in Theorem~\ref{T:heat} can be read off the homogeneous term $\tilde q_j\in\Sigma^{-r-j}(\tilde E)$ in the asymptotic expansion \eqref{E:qqj} of any parametrix $\tilde Q$ for the heat operator $A+\frac\partial{\partial t}$ on $M\times\R$.
Indeed, the representative $\tilde q_j\in\mathcal K(\mathcal T(M\times\R);\tilde E)$ used in \eqref{E:qj} is uniquely characterized by \eqref{E:homogt} and \eqref{E:vol}.
Note that these representatives are also translation invariant, corresponding precisely to the homogeneous terms on which the Volterra--Heisenberg calculus is based on, see \cite[Section~3]{BGS84} and \cite[Section~5.1]{P08}.
\end{remark}

To complete the proof of Theorem~\ref{T:heat} it remains to show $q_0(x)>0$.
This will be accomplished using the subsequent lemma concerning the heat kernel on the osculating groups, cf.\ \cite[Lemma~6.1.4]{P08}.

\begin{lemma}\label{L:symbolheat}
Consider the Heisenberg principal symbol $\sigma_x^r(A)$ as a left invariant homogeneous differential operator acting on $C^\infty(\mathcal T_xM,E_x)$.
The corresponding heat equation admits a kernel (fundamental solution) in the Schwartz space of $\eend(E_x)$-valued $1$-densities on $\mathcal T_xM$.
More precisely, there exists a family
\begin{equation}\label{E:kS}
k_t^{\sigma^r_x(A)}\in\mathcal S\bigl(|\Lambda_{\mathcal T_xM}|\bigr)\otimes\eend(E_x),
\end{equation}
depending smoothly on $t>0$, which satisfies the heat equation, i.e.,
\begin{equation}\label{E:kequ}
\tfrac\partial{\partial t}k_t^{\sigma_x^r(A)}=-\sigma_x^r(A)k_t^{\sigma_x^r(A)},
\end{equation}
and the initial condition
\begin{equation}\label{E:kinit}
\lim_{t\searrow0}k_t^{\sigma_x^r(A)}\psi=\psi,
\end{equation}
for all $\psi\in\mathcal S(\mathcal T_xM)\otimes E_x$.
Moreover, this kernel is homogeneous, that is,
\begin{equation}\label{E:khomo}
(\delta_{\lambda,x}^{\mathcal T_xM})_*k_t^{\sigma^r_x(A)}=k_{\lambda^rt}^{\sigma^r_x(A)},
\end{equation}
for all $t>0$ and $\lambda\neq0$.
Furthermore, $k_t^{\sigma^r_x(A)}(o_x)>0$ in $|\Lambda_{\mathcal T_xM,o_x}|\otimes\eend(E_x)$, for each $t>0$.
The heat kernel $k_t^{\sigma^r_x(A)}$ is uniquely characterized by \eqref{E:kequ} in the following sense:
Every family of tempered distributions $k_t''\in\mathcal S'(\mathcal T_xM)\otimes\eend(E_x)$ which is continuously differentiable in $t>0$, satisfies the heat equation \eqref{E:kequ} and the initial condition $\lim_{t\searrow0}k_t''=\delta_{o_x}$ in $\mathcal S'(\mathcal T_xM)\otimes\eend(E_x)$, coincides with $k^{\sigma^r_x(A)}_t$.
\end{lemma}

\begin{proof}
Since $\tilde Q$ is a parametrix for $A+\frac\partial{\partial t}$, we have $\sigma^r_{(x,s)}(A+\tfrac\partial{\partial t})\sigma^{-r}_{(x,s)}(\tilde Q)=1$ in $\Sigma^0_{(x,s)}(\tilde E)$, for any fixed $s\in\R$.
As $\sigma^r_{(x,s)}(A+\tfrac\partial{\partial t})=\sigma^r_x(A)+\tfrac\partial{\partial t}$ and $\tilde q_{0,x}=\tilde q_{0,(x,s)}=\sigma^{-r}_{(x,s)}(\tilde Q)$, we get 
$\bigl(\sigma^r_{x}(A)+\tfrac\partial{\partial t}\bigr)\tilde q_{0,x}=\delta_{(o_x,0)}$ mod $\mathcal K^\infty\bigl(\mathcal T_xM\times\R;\tilde E_x\bigr)$.
By homogeneity, see \eqref{E:ltqj}, we actually have
\begin{equation}\label{E:tqequ}
\bigl(\sigma^r_{x}(A)+\tfrac\partial{\partial t}\bigr)\tilde q_{0,x}=\delta_{(o_x,0)}.
\end{equation}
Hence, restricting $\tilde q_{0,x}$ to $\mathcal T_xM\times(0,\infty)\subseteq\mathcal T_xM\times\R=\mathcal T_{(x,s)}(M\times\R)$, more precisely, defining
\begin{equation}\label{E:kdef}
k^{\sigma^r_x(A)}_tdt:=\tilde q_{0,x}|_{\mathcal T_xM\times\{t\}},\qquad t>0,
\end{equation}
we obtain a smooth kernel satisfying the differential equation in \eqref{E:kequ}.
As $\tilde Q$ is also a left parametrix for $A+\frac\partial{\partial t}$, a similar argument shows
\begin{equation}\label{E:rightheq}
\tfrac\partial{\partial t}k_t^{\sigma_x^r(A)}=-k_t^{\sigma_x^r(A)}\sigma^r_x(A),
\end{equation}
where the right hand side involves the convolution of the (smooth) kernel $k_t^{\sigma_x^r(A)}$ with the (distributional) kernel of $\sigma^r_x(A)$, a distribution on $\mathcal T_xM$ which is supported at the unit element $o_x$.
The homogeneity formulated in \eqref{E:khomo} follows from \eqref{E:ltqj}, see also \eqref{E:deltaMR}.

To show that $k_t^{\sigma^r_x(A)}$ is in the Schwartz class, we fix a homogeneous norm $|-|$ and translation invariant volume density $dg$ on $\mathcal T_xM$.
Writing $k_t^{\sigma^r_x(A)}=k_t'dg$ and $\tilde q_{0,x}=q'dgdt$, homogeneity implies 
\begin{equation}\label{E:asdfjkl;}
k_t'(g)=|g|^{-n}q'\bigl(\delta_{|g|}^{-1}(g),|g|^{-r}t\bigr),
\end{equation}
for all $t>0$ and $o_x\neq g\in\mathcal T_xM$.
Since $q'$ is smooth away from the origin in $\mathcal T_xM\times\R$ and vanishes on $\mathcal T_xM\times(-\infty,0)$, see \eqref{E:vol}, it vanishes to infinite order along $\mathcal T_xM\times\{0\}$.
Combining this with \eqref{E:asdfjkl;}, we see that $|g|^mk_t'(g)$ tends to $0$, as $|g|\to\infty$, for each $m\in\N$.
The same argument shows that this remains true if $k'_t$ is replaced with $Bk_t'$ where $B$ is some left invariant homogeneous differential operator on $\mathcal T_xM$.
This shows that $k_t'$ is indeed in the Schwartz space $\mathcal S(\mathcal T_xM)\otimes\eend(E_x)$, whence \eqref{E:kS}.

By homogeneity, see \eqref{E:khomo}, the integral $\int_{\mathcal T_xM}k_t^{\sigma^r_x(A)}$ is independent of $t$.
Testing \eqref{E:tqequ} with $\chi\in\Gamma^\infty_c(\mathbb R,E_x)$, considered as function on $\mathcal T_xM\times\mathbb R$ which is constant in $\mathcal T_xM$, we obtain 
\begin{multline*}
\chi(0)
=\langle\delta_{(o_x,0)},\chi\rangle
=\langle(\sigma^r_x(A)+\tfrac\partial{\partial t})\tilde q_{0,x},\chi\rangle
=-\langle\tilde q_{0,x},\chi'\rangle
\\
=-\int_{\mathcal T_xM}k_t^{\sigma^r_x(A)}\int_0^\infty\chi'(t)dt
=\Bigl(\int_{\mathcal T_xM}k_t^{\sigma^r_x(A)}\Bigr)\chi(0),
\end{multline*}
see \eqref{E:vol} and \eqref{E:kdef}.
Consequently,
\begin{equation}\label{E:kint}
\int_{\mathcal T_xM}k_t^{\sigma^r_x(A)}=\id_{E_x},
\end{equation}
for all $t>0$.
Using \eqref{E:khomo}, this readily implies the initial condition \eqref{E:kinit}.

Let us now turn to the uniqueness of the heat kernel.
The heat equation for $k_t''$ and \eqref{E:rightheq} imply that for fixed $t>0$ the expression $k_s^{\sigma^r_x(A)}k_{t-s}''$ is independent of $s\in(0,t)$.
Moreover, the initial condition for $k_t''$ gives $\lim_{s\nearrow t}k_s^{\sigma^r_x(A)}k_{t-s}''=k_t^{\sigma^r_x(A)}$, while the initial condition \eqref{E:kinit} for $k_t^{\sigma^r_x(A)}$ implies $\lim_{s\searrow0}k_s^{\sigma^r_x(A)}k_{t-s}''=k_t''$.
Combining these observations, we obtain $k_t''=k_t^{\sigma^r_x(A)}$, as well as the semi group property
\begin{equation}\label{E:semigroup}
k^{\sigma^r_x(A)}_sk^{\sigma^r_x(A)}_t=k^{\sigma^r_x(A)}_{s+t},\qquad s,t>0.
\end{equation}
Since $A$ is formally selfadjoint, we also have $\bigl(k^{\sigma^r_x(A)}_t\bigr)^*=k^{\sigma^r_x(A)}_t$, that is,
\begin{equation}\label{E:adj}
k^{\sigma^r_x(A)}_t(g^{-1})=\bigl(k^{\sigma^r_x(A)}_t(g)\bigr)^*,
\end{equation}
for $g\in\mathcal T_xM$ and $t>0$.
Combining \eqref{E:semigroup} and \eqref{E:adj}, we obtain
\begin{equation}\label{E:q0k12}
k_{2t}^{\sigma^r_x(A)}(o_x)
=\int_{g\in\mathcal T_xM}\bigl(k^{\sigma^r_x(A)}_t(g)\bigr)^*k^{\sigma^r_x(A)}_t(g).
\end{equation}
Using \eqref{E:kint}, we conclude $k^{\sigma^r_x(A)}_{2t}(o_x)>0$ in $|\Lambda_{\mathcal T_xM,o_x}|\otimes\eend(E_x)$.
\end{proof}

From \eqref{E:qj} and \eqref{E:kdef} we get, up to the canonical identification $|\Lambda_{M,x}|=|\Lambda_{\mathcal T_xM,o_x}|$,
\begin{equation}\label{E:q0x}
q_0(x)=k_1^{\sigma^r_x(A)}(o_x).
\end{equation}
Hence, $q_0(x)>0$ in $|\Lambda_{M,x}|\otimes\eend(E_x)$, for we have $k_t^{\sigma^r_x(A)}(o_x)>0$ according to Lemma~\ref{L:symbolheat}.
This completes the proof of Theorem~\ref{T:heat}.

\section{Complex powers}\label{S:power}

In this section we will present a proof of Theorem~\ref{T:powers} following the approach in \cite[Section~5.3]{P08}.
Throughout this section $E$ denotes a vector bundle over a closed filtered manifold $M$, and $A\in\DO^r(E)$ is a Rockland differential operator of even Heisenberg order $r>0$ which is formally selfadjoint and non-negative as in Theorem~\ref{T:heat}.

Recall that by hypoellipticity, $\ker(A)$ is a finite dimensional subspace of $\Gamma^\infty(E)$, see \cite[Corollary~2.10]{DH17}.
Let $P$ denote the orthogonal projection onto $\ker(A)$.
To express the complex powers using the Mellin formula, 
\begin{equation}\label{E:MellinAz}
A^{-z}=\frac1{\Gamma(z)}\int_0^\infty t^{z-1}(e^{-tA}-P)dt,\qquad\Re(z)>0,
\end{equation}
we need the following standard estimate for the heat kernel for large time.

\begin{lemma}\label{L:ktxyesti}
Let $p\in\Gamma^\infty(E\boxtimes E')$ denote the Schwartz kernel of the orthogonal projection $P$ onto $\ker(A)$.
Then there exists $\varepsilon>0$ such that  
\begin{equation}\label{E:ktesti}
k_t(x,y)=p(x,y)+O\bigl(e^{-t\varepsilon}\bigr)
\end{equation} 
as $t\to\infty$, uniformly with all derivatives in $x$ and $y$.
\end{lemma}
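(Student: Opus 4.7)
The plan is to combine a spectral gap estimate in $L^2$ with the smoothing property of $e^{-A}$ established in Theorem~\ref{T:heat}.

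First, by Lemma~\ref{L:selfadj} and Corollary~\ref{C:weyl}, the operator $A$ is essentially selfadjoint with discrete non-negative spectrum. If $\ker(A)=L^2(E)$ there is nothing to prove; otherwise let $\lambda_1>0$ denote the smallest positive eigenvalue and set $\varepsilon:=\lambda_1$. Since $P$ is the spectral projection onto $\ker(A)$, it commutes with the functional calculus of $A$, and the elementary spectral estimate gives the operator norm bound
$$
\|e^{-tA}-P\|_{L^2(E)\to L^2(E)}\leq e^{-t\lambda_1}=e^{-t\varepsilon},\qquad t\geq0.
$$

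Next, for $t\geq2$ I would use the factorization
$$
e^{-tA}-P=e^{-A}\bigl(e^{-(t-2)A}-P\bigr)e^{-A},
$$
which follows from $P^2=P$, $AP=0$, and hence $e^{-sA}P=Pe^{-sA}=P$ for all $s\geq0$. By Theorem~\ref{T:heat}, $e^{-A}$ is a smoothing operator, so for every $N$ it induces bounded maps $L^2(E)\to H^N(E)$ and $H^{-N}(E)\to L^2(E)$ in the Heisenberg Sobolev scale. Sandwiching the $L^2$ bound above between these two mapping properties yields
$$
\|e^{-tA}-P\|_{H^{-N}(E)\to H^N(E)}\leq C_N\,e^{-(t-2)\varepsilon},\qquad t\geq2.
$$

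Finally, I would pass from operator bounds to pointwise kernel bounds by invoking the Heisenberg Sobolev embedding from \cite[Proposition~3.21]{DH17}: for every integer $k$ there is some $N=N(k)$ such that a bound on $\|T\|_{H^{-N}(E)\to H^N(E)}$ dominates $\|k_T\|_{C^k(M\times M)}$. Applied to $T=e^{-tA}-P$ this yields $k_t(x,y)-p(x,y)=O(e^{-t\varepsilon})$, uniformly on $M\times M$ together with all derivatives in $x$ and $y$, as claimed. The only mildly technical point is this last step, converting Sobolev-norm bounds to uniform $C^k$-bounds on Schwartz kernels in the filtered calculus, but it is a routine consequence of the Sobolev theory already cited and requires no new analysis.
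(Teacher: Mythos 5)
Your proposal is correct and follows essentially the same route as the paper: a spectral-gap estimate in operator norm on $L^2(E)$ (using that $0$ is isolated in the spectrum because the resolvent is compact), followed by sandwiching $e^{-(t-2)A}-P$ between copies of the smoothing operator $e^{-A}$ (the paper uses $e^{-tA}=e^{-A/2}e^{-(t-1)A}e^{-A/2}$), and finally a conversion of operator bounds into uniform $C^k$ bounds on Schwartz kernels. The only cosmetic difference is in the last step, where the paper invokes nuclearity and the identification $\mathcal L\bigl(\Gamma^{-\infty}(E),\Gamma^\infty(E)\bigr)=\Gamma^\infty(E\boxtimes E')$ rather than the Heisenberg Sobolev embedding, but both are standard and equivalent here.
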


\begin{proof}
Since $A$ is selfadjoint and non-negative, its spectrum is contained in $[0,\infty)$.
Moreover, since $A$ has compact resolvent, zero is isolated in its spectrum.
Hence, by functional calculus, there exists $\varepsilon>0$ such that
$$
e^{-tA}=P+O\bigl(e^{-t\varepsilon}\bigr)
$$
as $t\to\infty$ with respect to the operator norm topology on $\mathcal B\bigl(L^2(E)\bigr)$.
Writing
$$
e^{-tA}=e^{-A/2}e^{-(t-1)A}e^{-A/2}
$$
and using the fact that $e^{-A/2}$ is a smoothing operator, we conclude
\begin{equation}\label{E:etAO}
e^{-tA}=P+O\bigl(e^{-t\varepsilon}\bigr)
\end{equation}
in $\mathcal L\bigl(\Gamma^{-\infty}(E),\Gamma^\infty(E)\bigr)$ with respect to the topology of uniform convergence on bounded subsets of $\Gamma^{-\infty}(E)$, as $t\to\infty$.
By nuclearity, and since $\Gamma^{-\infty}(E)$ is the strong dual of $\Gamma^\infty(E')$, we have canonical topological isomorphisms
$$
\mathcal L\bigl(\Gamma^{-\infty}(E),\Gamma^\infty(E)\bigr)=\Gamma^\infty(E)\hat\otimes\Gamma^\infty(E')=\Gamma^\infty(E\boxtimes E'),
$$
see \cite[Equations~(50.17) and (51.4)]{T67}.
Hence, \eqref{E:etAO} is equivalent to \eqref{E:ktesti}.
\end{proof}

Suppose $\Re(z)>0$.
In view of the remarks on the spectrum at the beginning of the proof of Lemma~\ref{L:ktxyesti} above, the integral on the right hand side of the Mellin formula \eqref{E:MellinAz} converges to $A^{-z}$ with respect to the operator norm topology on $L^2(E)$.
In particular, $A^{-z}$ is a bounded operator on $L^2(E)$ and its distributional kernel can be expressed in the form
$$
k_{A^{-z}}(x,y)=
\frac1{\Gamma(z)}\int_0^\infty t^{z-1}\bigl(k_t(x,y)-p(x,y)\bigr)dt,\qquad\Re(z)>0,
$$
where the integral on the right hand side converges in the distributional sense.
Splitting the integral as usual, we may rewrite this as
\begin{multline}\label{E:mellink}
k_{A^{-z}}(x,y)=\frac1{\Gamma(z)}\int_0^1t^{z-1}k_t(x,y)dt-\frac1{\Gamma(z)}\frac{p(x,y)}z
\\+\frac1{\Gamma(z)}\int_1^\infty t^{z-1}\bigl(k_t(x,y)-p(x,y)\bigr)dt,\qquad\Re(z)>0.
\end{multline}
In view of \eqref{E:ktesti} the second integral converges with respect to the $C^\infty$-topology and defines a smooth function on $M\times M\times\C$ which depends holomorphically on $z$.
If $x\neq y$, then $k_t(x,y)$ vanishes to infinite order at $t=0$, see~\eqref{E:kQkt}, hence the first integral in \eqref{E:mellink} converges in the $C^\infty$-topology and defines a smooth function on $\{x\neq y\}\times\C$ which depends holomorphically on $z$.

To study the behavior of
\begin{equation}\label{E:defKs}
K_z(x,y):=\int_0^1t^{z-1}k_t(x,y)dt,\qquad\Re(z)>0,
\end{equation}
near the diagonal, we fix a real number $s$, use \eqref{E:kQkt}, and move to exponential coordinates, see \eqref{E:tpp} and \eqref{E:tvpvp},
\begin{equation}\label{E:mellinexp}
(\phi^*K_z)_x(g)=\int_{t\in[0,1]}t^{z-1}\bigl(\tilde\phi^*(k_Q|_{\tilde V})\bigr)_{(x,s)}(g,t),\qquad\Re(z)>0,
\end{equation}
where $x\in M$, $g\in\mathcal T_xM$.
Given an integer $N$, we choose an integer $j_N$ such that $\tilde\phi^*(k_Q|_{\tilde V})-\sum_{j=0}^{j_N}\tilde q_j$ is of class $C^N$, see \eqref{E:asymtQO}, and rewrite \eqref{E:mellinexp} in the form
\begin{equation}\label{E:splitintk}
(\phi^*K_z)_x(g)
=\int_{t\in[0,1]}t^{z-1}\left(\tilde\phi^*(k_Q|_{\tilde V})-\sum_{j=0}^{j_N}\tilde q_j\right)_{(x,s)}(g,t)
+\sum_{j=0}^{j_N}(K_{z,j})_x(g)
\end{equation}
where $K_{z,j}$ the distributional section of $\eend(\pi^*E)\otimes\Omega_\pi$ defined by
\begin{equation}\label{E:defKsj}
(K_{z,j})_x(g):=\int_{t\in[0,1]}t^{z-1}\tilde q_{j,(x,s)}(g,t),\qquad\Re(z)>-j/r.
\end{equation}
In view of \eqref{E:ltqj} and \eqref{E:deltaMR} we have
\begin{equation}\label{E:Ksjhom}
\bigl(\bigl(\delta^{\mathcal TM}_\lambda\bigr)_*K_{z,j}-\lambda^{-zr-j}K_{z,j}\bigr)_x(g)=
\lambda^{-zr-j}\int_{t\in[1,\lambda^r]}t^{z-1}\tilde q_{j,(x,s)}(g,t)
\end{equation}
for all $\lambda\geq1$, and a similar formula holds for $0<\lambda\leq1$.
Since the integral on the right hand side defines a smooth section of $\eend(E)\otimes\Omega_\pi$, we conclude $K_{z,j}\in\Sigma^{-zr-j}(E)$.
Since the integral term in \eqref{E:splitintk} is of class $C^N$, we have an asymptotic expansion $\phi^*K_z\sim\sum_{j=0}^\infty K_{z,j}$, provided $\Re(z)>0$.
Combining this with \eqref{E:mellink} and \eqref{E:defKs} we obtain an asymptotic expansion
\begin{equation}\label{E:Azasym}
\phi^*(k_{A^{-z}})\sim\sum_{j=0}^\infty\frac{K_{z,j}}{\Gamma(z)}.
\end{equation}
This shows $A^{-z}\in\Psi^{-zr}(E)$, provided $\Re(z)>0$.
Using $A^{k-z}=A^kA^{-z}$ and $A^k\in\Psi^{kr}(E)$ for $k\in\N$, we see that this remains true for all complex $z$.

For $\Re(z)>n/r$ the Heisenberg order of $A^{-z}$ has real part smaller than $-n$, hence this power has a continuous kernel, see \cite[Proposition~3.9(d)]{DH17}, and the Mellin formula \eqref{E:MellinAz} gives
\begin{equation}\label{E:mellin}
k_{A^{-z}}(x,x)=\frac1{\Gamma(z)}\int_0^\infty t^{z-1}\bigl(k_t(x,x)-p(x,x)\bigr)dt,\qquad\Re(z)>n/r.
\end{equation}
Splitting the integral as usual, we may write
\begin{multline}\label{E:splitint}
\int_0^\infty t^{z-1}\bigl(k_t(x,x)-p(x,x)\bigr)dt
=\int_1^\infty t^{z-1}\bigl(k_t(x,x)-p(x,x)\bigr)dt-\frac{p(x,x)}z
\\+\int_0^1t^{z-1}\left(k_t(x,x)-\sum_{j=0}^{N-1}t^{(j-n)/r}q_j(x)\right)dt
+\sum_{j=0}^{N-1}\frac{q_j(x)}{z-(n-j)/r}.
\end{multline}
where $q_j\in\Gamma^\infty\bigl(\eend(E)\otimes|\Lambda_M|\bigr)$ are as in Theorem~\ref{T:heat}.
In view of the estimate \eqref{E:ktesti}, the first integral on the right hand side of \eqref{E:splitint} converges for all complex $z$ and defines an entire function.
In view of Theorem~\ref{T:heat}, the second integral on the right hand side of \eqref{E:splitint} converges for $\Re(z)>(n-N)/r$ and defines a holomorphic function on $\{\Re(z)>(n-N)/r\}$.
Note that these considerations are all uniform in $x$.
Combining this with \eqref{E:mellin}, we see that $k_{A^{-z}}(x,x)$ can be extended meromorphically to all complex $z$ with poles and special values as specified in Theorem~\ref{T:powers}.
Here we also use the classical fact that $1/\Gamma(z)$ is an entire function with zero set $-\N_0$ and $(1/\Gamma)'(-l)=(-1)^ll!$ for all $l\in\N_0$.

To complete the proof of Theorem~\ref{T:powers}, it remains to show that the powers $A^{-z}$ form a holomorphic family of Heisenberg pseudodifferential operators in a sense analogous to \cite[Section~4]{P08}. 
This will be established in Section~\ref{S:fam} below.

\section{Holomorphic families of Heisenberg pseudodifferential operators}\label{S:fam}

In this section we will extend the concept of a holomorphic family of pseudodifferential operators \cite[Chapter~4]{P08} to the Heisenberg calculus on general filtered manifolds.
We will show that the complex powers discussed above do indeed form a holomorphic family as stated in Theorem~\ref{T:powers}.
Holomorphic families will also be used to construct a non-commutative residue in Section~\ref{S:res} below.

\begin{remark}\label{R:top}
Below we will use several spaces of distributions which are conormal to certain closed submanifolds.
It will be convenient to equip theses spaces with the structure of a complete locally convex vector space.
To describe this topology, suppose $\xi$ is a vector bundle over a smooth manifold $N$, and suppose $S$ is a closed submanifold of $N$.
We let $\mathcal K\subseteq\Gamma^{-\infty}(\xi)$ denote the vector space of all distributional sections of $\xi$ whose wave front set is contained in the conormal of $S$.
In particular, these distributions are smooth on $N\setminus S$, hence we have a canonical map 
\begin{equation}\label{E:topa}
\mathcal K\to\Gamma^\infty(\xi|_{N\setminus S}).
\end{equation}
We let $\pi\colon T^\perp S\to S$ denote the normal bundle of $S$ in $N$, that is, $T^\perp S:=TN|_S/TS$.
Suppose $\varphi\colon T^\perp S\to W\subseteq N$ is a tubular neighborhood, i.e.\ a diffeomorphism onto an open neighborhood $W$ of $S$ in $N$, which restricts to the identity along $S$.
Moreover, let $\phi$ be a vector bundle isomorphism $\pi^*(\xi|_S)\cong\varphi^*(\xi|_W)$.
If $a\in\mathcal K$, then $\phi^*a$ is a distributional sections of $\pi^*(\xi|_S)$ whose wave front set is conormal to the zero section $S\subseteq T^\perp S$.
In particular, $\phi^*a$ is $\pi$-fibered.
Hence, we obtain a map 
\begin{equation}\label{E:topb}
\phi^*\colon\mathcal K\to\Gamma^{-\infty}_\pi(\pi^*(\xi|_S)),
\end{equation}
where the right hand side denotes the space of all $\pi$-fibered distributional sections of $\pi^*(\xi|_S)$.
Recall that elements $a\in\Gamma^{-\infty}_\pi(\pi^*(\xi|_S))$ can be considered as families of distributions $a_x\in C^{-\infty}(T^\perp_xS,\xi_x)$ on the fibers $T_x^\perp S$ with values in $\xi_x$ which depend smoothly on $x\in S$.
Every $\chi\in\Gamma^\infty_c(\Omega_\pi)$ provides a map $\Gamma^{-\infty}_\pi(\pi^*(\xi|_S))\to\Gamma^\infty(\xi|_S)$, $a\mapsto\pi_*(a\chi)$, where $\pi_*$ denotes integration along the fibers of $\pi$.
We equip $\Gamma^{-\infty}_\pi(\pi^*(\xi|_S))$ with the weakest locally convex topology such that these maps are all continuous.
Finally, we equip $\mathcal K$ with the coarsest locally convex topology such that the maps \eqref{E:topa} and \eqref{E:topb} become continuous.
It is well known that $\mathcal K$ is complete.
Moreover, the topology does not depend on the choice of a tubular neighborhood.
\end{remark}

Suppose $E$ and $F$ are two vector bundles over a filtered manifold $M$.
Moreover, let $\Omega$ be a domain in the complex plane and suppose $s\colon\Omega\to\mathbb C$ is a holomorphic function.
We intend to make precise when a family of operators $A_z\in\Psi^{s(z)}(E,F)$, parametrized by $z\in\Omega$, is considered to be a holomorphic family.

Recall from Section~\ref{S:calculus} that $\mathcal K(\mathbb TM;E,F)$ denotes the space of all distributional sections of the vector bundle \eqref{E:EEFFO} whose wave front set is contained in the conormal of the units, $M\times\R\subseteq\mathbb TM$.
We equip $\mathcal K(\mathbb TM;E,F)$ with the topology described in Remark~\ref{R:top}, and let $\mathcal K_\Omega(\mathbb TM;E,F)$ denote the space of holomorphic curves from $\Omega$ into $\mathcal K(\mathbb TM;E,F)$. 
Moreover, we let $\mathcal K_\Omega^\infty(\mathbb TM;E,F)$ denote the space of holomorphic curves from $\Omega$ into $\mathcal K^\infty(\mathbb TM;E,F)$ where the latter space carries the $C^\infty$-topology.

\begin{definition}\label{D:hf}
Let $\Omega$ be a domain in the complex plane and suppose $s\colon\Omega\to\C$ is a holomorphic function.
A family of operators $A_z\in\Psi^{s(z)}(E,F)$, parametrized by $z\in\Omega$, is called a \emph{holomorphic family of Heisenberg pseudodifferential operators} if there exists a family $\mathbb K_z\in\mathcal K_\Omega(\mathbb TM;E,F)$ such that $\ev_1(\mathbb K_z)$ is the Schwartz kernel of $A_z$ for all $z\in\Omega$, and $\mathbb K_z$ is essentially homogeneous of order $s(z)$ in the sense that $(\delta^{\mathbb TM}_\lambda)_*\mathbb K_z-\lambda^{s(z)}\mathbb K_z$ is a family in $\mathcal K^\infty_\Omega(\mathbb TM;E,F)$, for all $\lambda>0$.
\end{definition}

\begin{lemma}\label{L:hf:comp}
Suppose $A_z\in\Psi^{s(z)}(E,F)$ and $B_z\in\Psi^{t(z)}(F,G)$ are two holomorphic families of Heisenberg pseudodifferential operators, where $z\in\Omega$ and $s,t\colon\Omega\to\C$.
Then $B_zA_z\in\Psi^{(t+s)(z)}(E,G)$ is a holomorphic family of Heisenberg pseudodifferential operators, provided at least one of the two families is properly supported (locally uniformly in $z$).
Moreover, the transpose $A_z^t\in\Psi^{s(z)}(F',E')$ is a holomorphic family of Heisenberg pseudodifferential operators.
\end{lemma}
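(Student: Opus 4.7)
The plan is to lift both families to kernels on the tangent groupoid $\mathbb TM$ and transfer the arguments of Proposition~\ref{P:V}\itemref{P:V:AB}--\itemref{P:V:At} to the parameter-dependent setting. By Definition~\ref{D:hf}, choose holomorphic lifts $\mathbb K^A_z\in\mathcal K_\Omega(\mathbb TM;E,F)$ and $\mathbb K^B_z\in\mathcal K_\Omega(\mathbb TM;F,G)$ of $k_{A_z}$ and $k_{B_z}$ which are essentially homogeneous of orders $s(z)$ and $t(z)$, respectively. The local uniform proper support assumption translates, via the groupoid source/target maps, to a corresponding condition on one of these lifts (after possibly modifying by a cut-off contributing only to the smoothing part of the calculus, which persists holomorphically).

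For the composition, set $\mathbb K_z:=\mathbb K^B_z*\mathbb K^A_z$, the convolution in the tangent groupoid. Convolution by a properly supported conormal kernel preserves conormality, so $\mathbb K_z\in\mathcal K(\mathbb TM;E,G)$, and the multiplicativity of $\ev_1$ in diagram~\eqref{D:KTTM} gives $\ev_1(\mathbb K_z)=k_{B_z}*k_{A_z}=k_{B_zA_z}$. Since the zoom action is a groupoid automorphism,
\begin{equation*}
(\delta_\lambda^{\mathbb TM})_*\mathbb K_z=\bigl((\delta_\lambda^{\mathbb TM})_*\mathbb K^B_z\bigr)*\bigl((\delta_\lambda^{\mathbb TM})_*\mathbb K^A_z\bigr),
\end{equation*}
and expanding each factor using its essential homogeneity and bilinearity of convolution yields $(\delta_\lambda^{\mathbb TM})_*\mathbb K_z-\lambda^{s(z)+t(z)}\mathbb K_z\in\mathcal K^\infty_\Omega(\mathbb TM;E,G)$, because convolution preserves the smoothing class as soon as one factor is properly supported. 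Holomorphy of $z\mapsto\mathbb K_z$ then follows from the continuity of this bilinear operation on the locally convex spaces described in Remark~\ref{R:top}, so the composite of two holomorphic curves is again holomorphic.

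For the transpose, let $\iota\colon\mathbb TM\to\mathbb TM$ denote the groupoid inversion. Direct inspection shows $\iota$ commutes with the zoom action: on the pair-groupoid part $(x,y,t)\mapsto(y,x,t)$ is unaffected by rescaling $t$, while on the $t=0$ fibre $\delta_\lambda$ is a Lie group automorphism of $\mathcal T_xM$. Setting $\mathbb K^{A^t}_z:=\iota^*\mathbb K^A_z$, with the pointwise algebraic transpose on the coefficient bundle, produces a holomorphic family of essentially homogeneous lifts of order $s(z)$ that realizes $k_{A_z^t}$ under $\ev_1$, establishing the transposition statement. The main obstacle in the whole argument is the first step, namely verifying that convolution on $\mathbb TM$, with one locally uniformly properly supported factor, is a continuous bilinear operation on the conormal spaces equipped with the topology of Remark~\ref{R:top}; via the exponential chart $\Phi$ this reduces to a bundle-fibred statement about convolutions of conormal distributions on $\mathcal T^{\op}M\times\mathbb R$, where continuity is extracted from the H\"ormander wavefront composition criterion together with the proper support hypothesis, and the key technical point is checking that this continuity persists for holomorphic curves valued in these spaces.
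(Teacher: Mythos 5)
Your proof takes essentially the same approach as the paper's: lift the kernels to the tangent groupoid, use multiplicativity of the groupoid convolution and the fact that the zoom action is a groupoid automorphism to verify essential homogeneity, and invoke boundedness/continuity of the convolution and transposition maps on the conormal and smooth kernel spaces to conclude that holomorphic curves are preserved. The paper packages this as a short list of bounded (bi)linear maps on $\mathcal K(\mathbb TM;\cdot,\cdot)$, $\mathcal K^\infty(\mathbb TM;\cdot,\cdot)$ and their holomorphic-curve versions, and you spell out the same argument (including the four-term expansion behind the implicit chain of those maps), so the two are the same modulo presentation.
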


\begin{proof}
Convolution and transposition induce bounded (bi)linear maps:
\begin{align*}
\mathcal K(\mathbb TM;F,G)\times\mathcal K_\prop(\mathbb TM;E,F)&\to\mathcal K(\mathbb TM;E,G)
\\
\mathcal K^\infty(\mathbb TM;F,G)\times\mathcal K_\prop(\mathbb TM;E,F)&\to\mathcal K^\infty(\mathbb TM;E,G)
\\
\mathcal K(\mathbb TM;F,G)\times\mathcal K_\prop^\infty(\mathbb TM;E,F)&\to\mathcal K^\infty(\mathbb TM;E,G)
\\
\mathcal K^\infty(\mathbb TM;F,G)\times\mathcal K_\prop^\infty(\mathbb TM;E,F)&\to\mathcal K^\infty(\mathbb TM;E,G)
\\
\mathcal K(\mathbb TM;E,F)&\to\mathcal K(\mathbb TM;F',E')
\\
\mathcal K^\infty(\mathbb TM;E,F)&\to\mathcal K^\infty(\mathbb TM;F',E')
\end{align*}
Since holomorphic curves remain holomorphic when composed with bounded (bi)linear mappings, convolution and transposition thus induce maps:
\begin{align*}
\mathcal K_\Omega(\mathbb TM;F,G)\times\mathcal K_{\Omega,\prop}(\mathbb TM;E,F)&\to\mathcal K_\Omega(\mathbb TM;E,G)
\\
\mathcal K_\Omega^\infty(\mathbb TM;F,G)\times\mathcal K_{\Omega,\prop}(\mathbb TM;E,F)&\to\mathcal K_\Omega^\infty(\mathbb TM;E,G)
\\
\mathcal K_\Omega(\mathbb TM;F,G)\times\mathcal K^\infty_{\Omega,\prop}(\mathbb TM;E,F)&\to\mathcal K_\Omega^\infty(\mathbb TM;E,G)
\\
\mathcal K_\Omega^\infty(\mathbb TM;F,G)\times\mathcal K^\infty_{\Omega,\prop}(\mathbb TM;E,F)&\to\mathcal K_\Omega^\infty(\mathbb TM;E,G)
\\
\mathcal K_\Omega(\mathbb TM;E,F)&\to\mathcal K_\Omega(\mathbb TM;F',E')
\\
\mathcal K_\Omega^\infty(\mathbb TM;E,F)&\to\mathcal K_\Omega^\infty(\mathbb TM;F',E')
\end{align*}
The lemma follows at once.
\end{proof}

Recall from Section~\ref{S:calculus} that $\mathcal K(\mathcal TM;E,F)$ denotes the space of all distributional sections of the vector bundle $\hom(\pi^*E,\pi^*F)\otimes\Omega_\pi$ over $\mathcal TM$ whose wave front set is contained in the conormal of the units, $M\subseteq\mathcal TM$.
We equip $\mathcal K(\mathcal TM;E,F)$ with the topology described in Remark~\ref{R:top}, and let $\mathcal K_\Omega(\mathcal TM;E,F)$ denote the space of holomorphic curves from $\Omega$ into $\mathcal K(\mathcal TM;E,F)$.
Moreover, we let $\mathcal K_\Omega^\infty(\mathcal TM;E,F)$ denote the space of holomorphic curves from $\Omega$ into $\mathcal K^\infty(\mathcal TM;E,F)$ where the latter space carries the $C^\infty$-topology.

\begin{definition}\label{D:ehO}
Let $\Omega$ be a domain in the complex plane, and suppose $s\colon\Omega\to\C$ is a holomorphic function.
A family $k_z\in\mathcal K_\Omega(\mathcal TM;E,F)$ is called \emph{essentially homogeneous} of order $s(z)$ if $(\delta_\lambda)_*k_z-\lambda^{s(z)}k_z\in\mathcal K_\Omega^\infty(\mathcal TM;E,F)$, for all $\lambda>0$.
This generalizes \cite[Definition~4.4.1]{P08} where the term \emph{almost homogeneous} is used.
\end{definition}

\begin{lemma}\label{L:hf}
Let $s\colon\Omega\to\C$ be a holomorphic function, consider a family of operators $A_z\in\Psi^{s(z)}(E,F)$ parametrized by $z\in\Omega$, and let $k_z$ denote the Schwartz kernel of $A_z$.
Then the following are equivalent:
\begin{enumerate}[(a)]
\item\label{L:hf:a}
$A_z$ is a holomorphic family of Heisenberg pseudodifferential operators.
\item\label{L:hf:b}
Away from the diagonal, the Schwartz kernels $k_z$ are smooth and depend holomorphically on $z\in\Omega$.
Moreover, with respect to some (and then every) exponential coordinates adapted to the filtration as in Section~\ref{S:calculus}, see \eqref{D:expcoor}, we have an asymptotic expansion of the form
\begin{equation}\label{E:hf:asym}
\phi^*(k_z|_V)\sim\sum_{j=0}^\infty k_{j,z}
\end{equation}
where $k_{j,z}\in\mathcal K_\Omega(\mathcal TM;E,F)$ is an essentially homogeneous family of order $s(z)-j$, cf.~Definition~\ref{D:ehO}.
More precisely, for every $z_0\in\Omega$ and every integer $N$ there exist a neighborhood $W$ of $z_0$ in $\Omega$ such that, for sufficiently large $J\in\N$, the expression $\phi^*(k_z|_V)-\sum_{j=0}^{J-1}k_{j,z}$ restricts to a holomorphic curve from $W$ into the space of $C^N$-sections of $\hom(\pi^*E,\pi^*F)\otimes\Omega_\pi$ over $U$.
\item\label{L:hf:c}
Away from the diagonal, the Schwartz kernels $k_z$ are smooth and depend holomorphically on $z\in\Omega$.
Moreover, with respect to some (and then every) exponential coordinates adapted to the filtration as in Section~\ref{S:calculus}, see \eqref{D:expcoor}, and for some (and then any) properly supported bump function $\chi\in C^\infty_\prop(\mathcal TM)$ with $\supp(\chi)\subseteq U$ and $\chi\equiv1$ in a neighborhood of the zero section, the fiber wise Fourier transform,
$$
\mathcal F\bigl(\chi\cdot\phi^*(k_z|_V)\bigr)(\xi)
=\int_{X\in\mathfrak t_xM}e^{-2\pi\mathbf i\langle\xi,X\rangle}\bigl(\chi\cdot\phi^*(k_z|_V)\bigr)(\exp(X)),\quad\xi\in\mathfrak t_x^*M,
$$
is smooth on $\Omega\times\mathfrak t^*M$, depends holomorphically on $z$, and admits a uniform asymptotic expansion of the form
\begin{equation}\label{E:hf:asymhat}
\mathcal F\bigl(\chi\cdot\phi^*(k_z|_V)\bigr)\sim\sum_{j=0}^\infty\hat k_{j,z},
\end{equation}
where $\hat k_{j,z}\in\Gamma^\infty(\hom(p^*E,p^*F)|_{\mathfrak t^*M\setminus M})$ are homogeneous of order $s(z)-j$, i.e., $(\dot\delta'_\lambda)^*\hat k_{j,z}=\lambda^{s(z)-j}\hat k_{j,z}$ for all $\lambda>0$.
Here $\dot\delta_\lambda'$ denotes the grading automorphism on $\mathfrak t^*M$ dual to $\dot\delta_\lambda$ on $\mathfrak tM$, and $p\colon\mathfrak t^*M\to M$ denotes the vector bundle projection.
More precisely, for all $a,N\in\N$, and for every homogeneous differential operator $D$ acting on $\Gamma^\infty(\hom(p^*E,p^*F))$, i.e., $(\dot\delta'_\lambda)_*D=\lambda^aD$, for every compact $K\subseteq\Omega$, and for some (and then every) fiberwise homogeneous norm on $\mathfrak t^*M$, and for some (and then every) fiberwise Hermitian metric on $\hom(E,F)$, there exists a constant $C\geq0$ such that 
\begin{equation}\label{E:esti}
\left|D\left(\mathcal F\bigl(\chi\cdot\phi^*(k_z|_V)\bigr)-\sum_{j=0}^{N-1}\hat k_{j,z}\right)(\xi)\right|
\leq C|\xi|^{s(z)-N-a}
\end{equation}
holds for all $z\in K$ and all $\xi\in\mathfrak t^*M$ with $|\xi|\geq1$.
\end{enumerate}
\end{lemma}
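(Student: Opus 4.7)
The plan is to establish the equivalences (a)$\iff$(b) and (b)$\iff$(c) separately, in each case reducing to the corresponding non-holomorphic characterization of $\Psi^s(E,F)$ recalled in Section~\ref{S:calculus} and then upgrading to holomorphic dependence by checking that the operations involved are continuous linear maps between complete locally convex spaces, which automatically send holomorphic curves to holomorphic curves. The ambient Fr\'echet/nuclear structures required for this strategy are precisely those described in Remark~\ref{R:top}.

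For (a)$\Rightarrow$(b), I would take a holomorphic essentially homogeneous extension $\mathbb K_z\in\mathcal K_\Omega(\mathbb TM;E,F)$ and extract its Taylor expansion \eqref{E:taylor} along $\mathcal T^\op M\times\{0\}$ in the adapted coordinates $\Phi$. The Taylor coefficient map sends $\mathbb K_z$ to a family $\mathbb K_{j,z}$ which, in view of \eqref{E:deltat}, is essentially homogeneous of order $s(z)-j$ on the osculating groupoid; it depends holomorphically on $z$ because Taylor extraction is continuous on the space of conormal distributions on $\mathbb TM$. Setting $k_{j,z}:=\nu^*(\mathbb K_{j,z})$ as in \eqref{E:kiKKi} yields the desired asymptotic expansion \eqref{E:hf:asym}, with the $C^N$-control of the remainder following by continuity of restriction after truncation. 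For the converse (b)$\Rightarrow$(a), I would construct $\mathbb K_z$ by Borel summing $\sum_j\nu^*(k_{j,z})\,t^j$ with cutoffs in $t$ chosen once and for all (independent of $z$), essentially repeating the asymptotic completeness argument in \cite[Section~3]{DH17} with parameter $z\in\Omega$; again this summation is a continuous linear operation on the relevant Fr\'echet spaces and hence preserves holomorphic curves locally uniformly in $z$.

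For (b)$\iff$(c), the key input is the classical Fourier correspondence between essentially homogeneous distributional densities on the graded nilpotent fibres $\mathcal T_xM$ and homogeneous smooth symbols on $\mathfrak t_x^*M$: modulo smooth (respectively Schwartz) remainders, the fibrewise Fourier transform implements a bijection between germs in $\mathcal K(\mathcal TM;E,F)$ essentially homogeneous of order $\sigma$ and smooth functions on $\mathfrak t^*M\setminus M$ homogeneous of order $\sigma$. Under this correspondence the kernel expansion \eqref{E:hf:asym} translates into the symbol expansion \eqref{E:hf:asymhat}, and the $C^N$-remainder bound on the kernel side is equivalent, by standard Fourier-analytic estimates, to the pointwise symbol bound \eqref{E:esti} holding for all homogeneous differential operators $D$ of degree $a$. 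Since the Fourier transform and its inverse are continuous for the natural Fr\'echet topologies on both sides (Remark~\ref{R:top} and its symbol-side analogue), both directions of the equivalence preserve holomorphic families.

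The main obstacle is ensuring that every non-canonical choice (representative in $\Sigma^{s(z)-j}$ modulo $\mathcal K^\infty$, cutoff for Borel summation, splitting used to invert Taylor extraction, etc.) can be made to depend holomorphically on $z$. The systematic remedy is to fix the adapted exponential coordinates, the tubular neighborhood, and all cutoff functions once and for all independently of $z$; each resulting construction is then a continuous linear map between the spaces listed in Remark~\ref{R:top}, so continuous linear images of holomorphic curves remain holomorphic. Verifying that the Borel summation step can be performed so that a single choice of cutoff sequence works uniformly on every compact $K\subseteq\Omega$ is the most technical point, and is where the nuclearity of the kernel spaces is used to interchange summation with the holomorphic dependence.
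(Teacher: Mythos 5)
Your proposal follows essentially the same route as the paper: (a)$\Leftrightarrow$(b) via the Taylor expansion of the essentially homogeneous extension along $\mathcal T^{\op}M\times\{0\}$ in one direction and a parametrized asymptotic-completeness (Borel-summation) argument in the other, and (b)$\Leftrightarrow$(c) via the fibrewise Fourier correspondence between essentially homogeneous kernels and strictly homogeneous symbols modulo Schwartz remainders, with holomorphy in $z$ propagated throughout by the principle that continuous linear maps between the complete locally convex spaces of Remark~\ref{R:top} preserve holomorphic curves. This matches the paper's proof, which likewise parametrizes \cite[Theorems~52 and 59]{EY15v5} in $z$ and fixes all coordinates and cutoffs independently of $z$.
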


\begin{proof}
To see \itemref{L:hf:a}$\Rightarrow$\itemref{L:hf:b} assume $A_z$ is a holomorphic family of Heisenberg pseudodifferential operators with Schwartz kernels $k_z$.
Choose $\mathbb K\in\mathcal K_\Omega(\mathbb TM;E,F)$ as in Definition~\ref{D:hf}, i.e., $\ev_1(\mathbb K_z)=k_z$ and $(\delta^{\mathbb TM}_\lambda)_*\mathbb K_z-\lambda^{s(z)}\mathbb K_z\in\mathcal K^\infty_\Omega(\mathbb TM;E,F)$ for all $\lambda>0$.
Away from the units, $\mathbb K_z$ is smooth and depends holomorphically on $z\in\Omega$.
Clearly, this implies that the same is true for $k_z$, i.e., away from the diagonal $k_z$ is smooth and depends holomorphically on $z\in\Omega$.
Now consider adapted exponential coordinates as in Section~\ref{S:calculus}.
By conormality, $\mathbb K$ has a Taylor expansion along $\mathcal T^\op M\times\{0\}$,
\begin{equation}\label{E:taylorO}
\Phi^*\bigl(\mathbb K_z|_{\mathbb V}\bigr)\sim\sum_{j=0}^\infty\mathbb K_{j,z}t^j
\end{equation}
where $\mathbb K_{j,z}\in\mathcal K_\Omega(\mathcal T^\op M;E,F)$.
By essential homogeneity of $\mathbb K$, the family $\mathbb K_{j,z}$ is essentially homogeneous of order $s(z)-j$, that is, $(\delta_\lambda)_*\mathbb K_{j,z}-\lambda^{s(z)-j}\mathbb K_{j,z}\in\mathcal K^\infty_\Omega(\mathcal T^\op M;E,F)$.
For each $J\in\N$ we have $\Phi^*\bigl(\mathbb K_z|_{\mathbb V}\bigr)-\sum_{j=0}^{J-1}\mathbb K_{j,z}t^j=t^J\mathbb L_{J,z}$ where $\mathbb L_{J,z}\in\mathcal K_\Omega(\mathbb U;E,F)$ is essentially homogeneous of order $s(z)-J$ on $\mathbb U$.
Let $W$ be an open subset with compact closure in $\Omega$, and suppose $N\in\N$.
Then, for sufficiently large $J$, the family $\mathbb L_{J,z}$ restricts to a holomorphic curve from $W$ into the space of $C^N$-sections over $\mathbb U$.
Indeed, this follows from a parametrized version of \cite[Theorem~52]{EY15v5}.
We conclude that $\Phi^*\bigl(\mathbb K_z|_{\mathbb V}\bigr)-\sum_{j=0}^{J-1}\mathbb K_{j,z}t^j$ restricts to a holomorphic curve from $W$ into the the space of $C^N$-sections over $\mathbb U$.
Evaluating at $t=1$, we obtain the asymptotic expansion \eqref{E:hf:asym} with $k_{j,z}=\nu^*(\mathbb K_{j,z})$.

To see \itemref{L:hf:b}$\Rightarrow$\itemref{L:hf:a} we observe that $t^jk_{j,z}\in\mathcal K_\Omega(\mathbb U;E,F)$ is essentially homogeneous of order $s(z)$. 
Parametrizing the proof of \cite[Theorem~59]{EY15v5}, we obtain a holomorphic family $\mathbb L_z\in\mathcal K_\Omega(\mathbb U;E,F)$ which is essentially homogeneous of order $s(z)$ and such that $\ev_1(\mathbb L_z)\sim\sum_{j=0}^\infty k_{j,z}$ in the same sense as \eqref{E:hf:asym}.
Clearly, this implies that $\phi^*(k_z|_V)-\ev_1(\mathbb L_z)$ is a holomorphic curve into the space of smooth sections over $\mathbb U$.
Adding a term in $\mathcal K_\Omega^\infty(\mathbb U;E,F)$ to $\mathbb L_z$, we may, moreover, assume $\phi^*(k_z|_V)=\ev_1(\mathbb L_z)$.
Using a bump function one readily constructs $\mathbb K_z\in\mathcal K_\Omega(\mathbb TM;E,F)$, essentially homogeneous of order $s(z)$, such that $\Phi^*(\mathbb K_z)$ coincides with $\mathbb L_z$ on neighborhood of $M\times\R$.
Hence, $k_z$ coincides with $\ev_1(\mathbb K_z)$ in a neighborhood of the diagonal.
Clearly, this implies \itemref{L:hf:a}.

Let us now turn to the implication \itemref{L:hf:b}$\Rightarrow$\itemref{L:hf:c}:
Note first, that $\chi\cdot\phi^*(k_z|_V)\in\mathcal K_{\Omega,\prop}(\mathcal TM;E,F)$, hence its fiberwise Fourier transform is smooth on $\mathfrak t^*M\times\Omega$ and depends holomorphically on $z\in\Omega$.
Fix a bump function $\rho\in C^\infty(\mathfrak t^*M)$ which vanishes in a neighborhood of the zero section and such that $\rho(\xi)=1$ whenever $|\xi|\geq1$.
Since $k_{z,j}$ is essentially homogeneous of order $s(z)-j$, there exist unique $\hat k_{j,z}\in\Gamma^\infty(\hom(p^*E,p^*F)|_{\mathfrak t^*M\setminus M})$, strictly homogeneous of order $s(z)-j$, such that
$\rho\cdot\bigl(\mathcal F(\chi\cdot k_{z,j})-\hat k_{z,j}\bigr)$ is in the fiberwise Schwartz space $\mathcal S(\mathfrak t^*M;\hom(p^*E,p^*F))$.
More precisely, for every compact $K\subseteq\Omega$, every homogeneous differential operator $D$, and every integer $b\in\N$, there exists a constant $C\geq0$ such that 
$$
\left|D\left(\mathcal F\bigl(\chi\cdot k_{j,z}\bigr)-\hat k_{j,z}\right)(\xi)\right|\leq C|\xi|^{-b}
$$
holds for all $z\in K$ and $\xi\in\mathfrak t^*M$ with $|\xi|\geq1$.
In view of the expansion \ref{E:hf:asym}, we conclude that for every compact $K\subseteq\Omega$, every $b\in\N$, every homogeneous differential operator $D$, and every sufficiently large $J\in\N$, there exists a constant $C\geq0$ such that 
$$
\left|D\left(\mathcal F\left(\chi\cdot\left(\phi^*(k_z|_V)-\sum_{j=0}^{J-1}k_{j,z}\right)\right)\right)(\xi)\right|\leq C|\xi|^{-b}
$$
holds for all $z\in K$ and $\xi\in\mathfrak t^*M$ with $|\xi|\geq1$.
Combining the preceding two estimates, we see that for every compact $K\subseteq\Omega$, every $b\in\N$, every homogeneous differential operator $D$, and every sufficiently large $J\in\N$, there exists a constant $C\geq0$ such that
$$
\left|D\left(\mathcal F\bigl(\chi\cdot\phi^*(k_z|_V)\bigr)-\sum_{j=0}^{J-1}\hat k_{j,z}\right)(\xi)\right|
\leq C|\xi|^{-b}
$$
holds for all $z\in K$ and $\xi\in\mathfrak t^*M$ with $|\xi|\geq1$.
If $D$ is homogeneous of degree $a\in\N$, i.e., $(\dot\delta'_\lambda)_*D=\lambda^aD$, then $|D\hat k_{j,z}(\xi)|\leq C|\xi|^{s(z)-j-a}$, whence \eqref{E:hf:asymhat}.

Let us now turn to the implication \itemref{L:hf:c}$\Rightarrow$\itemref{L:hf:b}:
Note first that the homogeneous terms $\hat k_{j,z}$ in the asymptotic expansion \eqref{E:hf:asymhat} depend holomorphically on $z\in\Omega$.
This follows from \eqref{E:esti}, cf.\ \cite[Remark~4.2.2]{P08}.
Defining $k_{j,z}:=\mathcal F^{-1}(\rho\cdot\hat k_{j,z})$, we thus have $k_{j,z}\in\mathcal K_\Omega(\mathcal TM;E,F)$.
Moreover, for each $\lambda>0$,
$$
(\delta_\lambda)_*k_{j,z}-\lambda^{s(z)-j}k_{j,z}
=\mathcal F^{-1}\left(\lambda^{s(z)-j}\cdot\bigl((\dot\delta'_\lambda)^*\rho-\rho\bigr)\cdot\hat k_{j,z}\right)
$$
is a holomorphic curve in the fiberwise Schwartz space $\mathcal S(\mathcal TM;\hom(\pi^*E,\pi^*F)\otimes\Omega_\pi)$.
In particular, $k_{j,z}$ is essentially homogeneous of order $s(z)-j$.
If $W$ is an open subset with compact closure in $\Omega$, then \eqref{E:esti} implies that, for sufficiently large $J\in\N$, the expression $\chi\cdot\phi^*(k_z|_V)-\sum_{j=0}^{J-1}k_{j,z}$ is a holomorphic curve in the space of $C^N$ sections, whence \eqref{E:hf:asym}.
\end{proof}

\begin{remark}
As pointed out in the proof above, the homogeneous terms $\hat k_{j,z}$ in the asymptotic expansion \eqref{E:hf:asymhat} depend holomorphically on $z\in\Omega$.
\end{remark}

The characterization in Lemma~\ref{L:hf} shows that the concept of holomorphic families considered here generalizes the definition for Heisenberg manifolds, see \cite[Section~4]{P08}.

Let us now complete the proof of Theorem~\ref{T:powers} by establishing the following generalization of \cite[Theorem~5.3.1]{P08}.

\begin{lemma}\label{L:Azhf}
The complex powers $A^{-z}\in\Psi^{-zr}(E)$ considered in Theorem~\ref{T:powers} constitute a holomorphic family.
\end{lemma}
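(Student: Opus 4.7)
The plan is to verify the characterization~\itemref{L:hf:b} of Lemma~\ref{L:hf} for the family $z\mapsto A^{-z}$. The off-diagonal part is already in hand: the representation \eqref{E:mellink} expresses $k_{A^{-z}}(x,y)$ for $x\neq y$ as a sum of terms that are smooth in $(x,y)$ and entire in $z$, since the first integral converges absolutely (in every $C^N$ norm, with all $z$-derivatives) by the infinite-order vanishing of $k_t(x,y)$ at $t=0$ recorded in \eqref{E:kQkt}, while the second integral converges by the exponential decay from Lemma~\ref{L:ktxyesti}.

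Near the diagonal I would first treat the half-plane $\Omega=\{\Re(z)>0\}$ and then extend to $\C$ by Lemma~\ref{L:hf:comp}. On $\Omega$, the asymptotic expansion \eqref{E:Azasym} already provides the candidate homogeneous components $K_{z,j}/\Gamma(z)$, and the two tasks are to show (i) that $z\mapsto K_{z,j}/\Gamma(z)$ is a holomorphic curve into $\mathcal K(\mathcal TM;E)$ (with the topology of Remark~\ref{R:top}) that is essentially homogeneous of order $-zr-j$, and (ii) that the remainder $\phi^*(k_{A^{-z}}|_V)-\sum_{j=0}^{J-1}K_{z,j}/\Gamma(z)$ is, for sufficiently large $J$, a holomorphic curve from a neighborhood of any prescribed $z_0\in\Omega$ into the $C^N$ sections over $U$. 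For (i), the essential homogeneity is the content of \eqref{E:Ksjhom}; holomorphy in $z$ follows because the kernel $\tilde q_j$ is independent of $z$ while $t\mapsto t^{z-1}$ is a holomorphic family of locally integrable functions on $(0,1]$, so every defining seminorm of the topology on $\mathcal K(\mathcal TM;E)$ commutes with the $t$-integration and depends holomorphically on $z$. For (ii), I would split the integral at $t=1$ as in \eqref{E:splitintk} and use the $C^N$ remainder estimate \eqref{E:asymtQO} of the heat parametrix together with dominated convergence, uniformly on compact $z$-subsets of $\Omega$, to pass $z$-derivatives under the integral. Division by $\Gamma(z)$ preserves holomorphy since $1/\Gamma$ is entire.

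To pass from $\Omega$ to all of $\C$, given $z_0\in\C$ I would pick $k\in\N$ with $\Re(z_0+k)>0$ and write $A^{-z}=A^kA^{-z-k}$ on a neighborhood $W$ of $z_0$. The constant family $A^k$ is a fixed differential operator of Heisenberg order $kr$, in particular a holomorphic family of Heisenberg pseudodifferential operators, and $A^{-z-k}$ is a holomorphic family on $W$ by the previous paragraph. Lemma~\ref{L:hf:comp} then yields that $A^{-z}$ is a holomorphic family of order $-zr$ on $W$; since $z_0$ was arbitrary, this covers $\C$. I expect the main technical obstacle to be task (i) in the middle paragraph: verifying holomorphy of the Mellin-type integral in the (non-trivial) locally convex topology on $\mathcal K(\mathcal TM;E)$ from Remark~\ref{R:top}, which amounts to checking compatibility of fiberwise integration-against-densities after passage to the normal bundle with the $t$-integral defining $K_{z,j}$.
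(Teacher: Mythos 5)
Your proposal is correct and follows essentially the same route as the paper's proof: reduce to the half-plane $\Re(z)>0$ by writing $A^{-z}=A^kA^{-(z+k)}$ and invoking Lemma~\ref{L:hf:comp}, then verify criterion \itemref{L:hf:b} of Lemma~\ref{L:hf} using the off-diagonal holomorphy from \eqref{E:mellink}, the essential homogeneity \eqref{E:Ksjhom} of the families $K_{z,j}$, and the holomorphic dependence of the $C^N$ remainder in \eqref{E:splitintk}. The extra detail you supply on holomorphy of the Mellin-type integrals in the topology of Remark~\ref{R:top} only fleshes out what the paper leaves implicit.
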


\begin{proof}
Writing $A^{-z}=A^kA^{-(z+k)}$ and using Lemma~\ref{L:hf:comp}, we see that it suffices to show that the powers form a holomorphic family on $\Omega=\{\Re(z)>0\}$.
To prove this, we will use the characterization in Lemma~\ref{L:hf}\itemref{L:hf:b} and proceed as in \cite[Lemma~5.3.2]{P08}.

As observed above, away from the diagonal the Schwartz kernels $k_{A^{-z}}$, see \eqref{E:mellink}, are smooth and depend holomorphically on $z\in\C$.
Moreover, the distributions $K_{z,j}$ defined in \eqref{E:defKsj} form a family in $\mathcal K_\Omega(\mathcal TM;E,F)$ which is essentially homogeneous of order $-rz-j$ in the sense of Definition~\ref{D:ehO}, see \eqref{E:Ksjhom}.
Furthermore, the $C^N$ sections defined by the integral in \eqref{E:splitintk} depend holomorphically on $z\in\Omega$.
We conclude that the asymptotic expansion \eqref{E:Azasym} established above is in fact the asymptotic expansion required in Lemma~\ref{L:hf}\itemref{L:hf:b}.
\end{proof}

\section{Non-commutative residue}\label{S:res}

In this section we consider the analogue of Wodzicki's non-commutative residue \cite{Wo87,Gu85} for filtered manifolds.
Let $E$ be a vector bundle over a closed filtered manifold $M$ of homogeneous dimension $n$. 
We will fix a strictly positive Rockland differential operator $D\in\DO^r(E)$ of even Heisenberg order $r>0$.
For a pseudodifferential operator $A\in\Psi^k(E)$ of integer Heisenberg order $k\in\mathbb Z$ the operator $AD^{-z}\in\Psi^{k-rz}(E)$ is trace class for $\mathfrak R(z)>(n+k)/r$ and hence defines a holomorphic function
\[
\zeta_A(z):=\tr(AD^{-z}),\qquad \mathfrak R(z)>(n+k)/r,
\]
see Theorem~\ref{T:powers} above and \cite[Proposition~3.9(d)]{DH17}.
In view of Lemma~\ref{L:hf:comp}, the subsequent proposition can be applied to $R(z)=AD^{-z}$.

We continue to use the canonical identification $|\Lambda_M|=o^*\Omega_\pi$ between the bundle of $1$-densities on $M$ and the restriction along the zero section of the bundle of vertical densities on $\mathcal TM$, see Section~\ref{S:calculus}.
In particular, we will use the induced identification 
\begin{equation}\label{E:PnLM}
\Gamma^\infty(|\Lambda_M|\otimes\eend(E))=\mathcal P^{-n}(\mathcal TM;E,E).
\end{equation}
Recall that the right hand side in \eqref{E:PnLM} denotes the space of smooth sections of $\eend(\pi^*E)\otimes\Omega_\pi$ which are homogeneous of degree $-n$ and thus fiberwise translation invariant (constant).

\begin{proposition}\label{prop:meromorphic}
Let $E$ be a vector bundle over a closed filtered manifold $M$ of homogeneous dimension $n$.
Moreover, suppose $r>0$ and let $R(z)\in\Psi^{d-rz}(E)$ be a holomorphic family of Heisenberg pseudodifferential operators on $M$ with Schwartz kernels $k_z$.
Then the restriction to the diagonal, $k_z(x,x)$, provides a holomorphic curve in $\Gamma^\infty(\eend(E)\otimes|\Lambda_M|\bigr)$ for $\Re(z)>(n+d)/r$.
This curve can be extended meromorphically to the entire complex plane with at most simple poles located at $z_j:=(n+d-j)/r$ with $j\in\mathbb N_0$.
Moreover, the residue at $z_j$ can be computed locally by the expected formula, 
\begin{equation}\label{E:extres}
\res_{z=z_j}\bigl(k_z(x,x)\bigr)=\frac1r\int_{\xi\in\mathfrak t_x^*M,|\xi|=1}\hat k_{j,z_j}(\xi)d\xi=\frac{p_j(o_x)}r,
\end{equation}
where $\hat k_{j,z}$ are as in Lemma~\ref{L:hf}\itemref{L:hf:c}, and $p_j\in\mathcal P^{-n}(\mathcal TM;E,E)$ are characterized by $(\delta_\lambda)_*k_{j,z_j}=\lambda^{-n}k_{j,z_j}+\lambda^{-n}\log|\lambda|\,p_j$ for $\lambda>0$ with $k_{j,z}$ as in Lemma~\ref{L:hf}\itemref{L:hf:b}. 
\footnote{By adding smooth cosymbols, the asymptotic terms $k_{j,z_j}$ may be chosen have this property. 
They are not unique, but $p_j$ is without ambiguity, cf.\ \cite[Lemma~3.8]{DH17} or Lemma~\ref{L:reps} above.}

In particular, the function $\tr(R(z))$ is analytic on the half plane $\mathfrak R(z)>(n+d)/r$, and has a meromorphic continuation to all of the complex plane with only simple poles possibly located at $z_j$ with $j\in\mathbb N_0$.
Moreover, 
\begin{equation}\label{E:restr}
\res_{z=z_j}\tr(R(z))=\frac1r\int_{x\in M}\int_{\xi\in\mathfrak t_x^*M,|\xi|=1}\tr_E\bigl(\hat k_{j,z_j}(\xi)\bigr)d\xi=\frac1r\int_M\tr_E(p_j|_M).
\end{equation}
\end{proposition}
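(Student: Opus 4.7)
The plan is to analyze $k_z(x,x)$ via the Fourier-side characterization in Lemma~\ref{L:hf}\itemref{L:hf:c}, so that the classical Wodzicki polar-coordinates argument applies to each homogeneous term. First I would fix adapted exponential coordinates and a properly supported cutoff $\chi$ as in Lemma~\ref{L:hf}\itemref{L:hf:c}, and set $a_z(x,\xi):=\mathcal F(\chi\cdot\phi^*(k_z|_V))(\xi)$. This symbol is smooth on $\mathfrak t^*M$, holomorphic in $z\in\Omega$, and admits the uniform expansion $a_z\sim\sum_j\hat k_{j,z}$ with $\hat k_{j,z}$ strictly homogeneous of order $d-rz-j$ under $\dot\delta'_\lambda$ and depending holomorphically on $z$ (the latter is the remark following Lemma~\ref{L:hf}). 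For $\Re(z)>(n+d)/r$ the Heisenberg order of $R(z)$ has real part below $-n$, so $R(z)$ has a continuous kernel by \cite[Proposition~3.9(d)]{DH17} and Fourier inversion expresses $k_z(x,x)$ as an absolutely convergent fiber integral of $a_z(x,\cdot)$, manifestly holomorphic in $z$ and smooth in $x$.

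Next I would isolate the piece $\{|\xi|\leq1\}$, which contributes an entire function of $z$, and for the outer piece $\{|\xi|\geq1\}$ I would subtract $\sum_{j<J}\hat k_{j,z}$ for $J$ large enough that the remainder is integrable and holomorphic in $z$ by the estimate \eqref{E:esti}. It then remains to extend each $I_{j,x}(z):=\int_{|\xi|\geq1}\hat k_{j,z}(\xi)\,d\xi$ meromorphically. Writing the integral in homogeneous polar coordinates on $\mathfrak t^*_xM$ (whose radial Jacobian is $\lambda^{n-1}$, $n$ being the homogeneous dimension), the homogeneity of $\hat k_{j,z}$ yields
\[
I_{j,x}(z)=\left(\int_1^\infty\lambda^{d-rz-j+n-1}\,d\lambda\right)\int_{|\xi|=1}\hat k_{j,z}(\xi)\,d\sigma(\xi)=\frac{1}{r(z-z_j)}\int_{|\xi|=1}\hat k_{j,z}(\xi)\,d\sigma(\xi),
\]
which is manifestly meromorphic with at most a simple pole at $z=z_j$; the holomorphy of $z\mapsto\hat k_{j,z}|_{\{|\xi|=1\}}$ then gives the first equality of \eqref{E:extres}, with all estimates locally uniform in $x$, so that the conclusion holds in $\Gamma^\infty(\eend(E)\otimes|\Lambda_M|)$.

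The identification of the spherical integral with $p_j(o_x)/r$ is the Fourier--kernel dictionary at the critical order $-n$: at $z=z_j$, the kernel term $k_{j,z_j}$ is essentially homogeneous of order $-n$, Lemma~\ref{L:reps} produces a unique polynomial obstruction $p_j\in\mathcal P^{-n}(\mathcal TM;E,E)$ via $(\delta_\lambda)_*k_{j,z_j}=\lambda^{-n}k_{j,z_j}+\lambda^{-n}\log|\lambda|\,p_j$, and a parallel radial-integration on the kernel side (pairing this defining equation against a dyadic test function and using the Parseval identity on $\mathcal T_xM$) shows that $p_j(o_x)$ coincides with the spherical Fourier integral computed above, up to the factor $1/r$ already pulled out.

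Finally, for the trace statement, $R(z)$ is trace class for $\Re(z)>(n+d)/r$ with $\tr(R(z))=\int_M\tr_E(k_z(x,x))$, so \eqref{E:restr} follows by integrating \eqref{E:extres} over $M$, the local uniformity in $x$ making the integration commute with the meromorphic continuation. The main technical obstacles will be setting up the homogeneous polar decomposition rigorously in the non-isotropic graded setting so that the factor $1/r$ comes out correctly, and establishing the dictionary between the spherical Fourier integral and $p_j(o_x)$ via Lemma~\ref{L:reps}; once these are in place the meromorphic continuation and the pole location follow immediately from \eqref{E:esti}.
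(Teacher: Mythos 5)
Your proposal matches the paper's proof almost line by line: same Fourier-side characterization via Lemma~\ref{L:hf}\itemref{L:hf:c}, same Fourier-inversion formula for $k_z(x,x)$, same three-way splitting of the $\xi$-integral at $|\xi|=1$, same homogeneous polar-coordinate computation producing $1/(r(z-z_j))$, and the trace statement by fiberwise integration. The only place where you go beyond the paper is in sketching the second equality of \eqref{E:extres} (the spherical Fourier integral versus $p_j(o_x)$), which the paper simply declares well known; your sketch via Lemma~\ref{L:reps} and a radial/Parseval pairing is the right idea.
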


\begin{remark}
The volume density $d\xi$ used in \eqref{E:extres} and \eqref{E:restr} above is defined such that a $1$-density on $\mathfrak t_xM$ in the Schwartz class, $k\in\mathcal S(|\Lambda_{\mathfrak t_xM}|)$, and its Fourier transform, $\hat k\in\mathcal S(\mathfrak t_x^*M)$, a Schwartz class function on $\mathfrak t_x^*M$, are related by
\begin{equation}\label{E:fourier.inv}
\hat k(\xi)=\int_{X\in\mathfrak t_xM}e^{-2\pi\mathbf i\langle\xi,X\rangle}k
\qquad\text{and}\qquad
k(X)=\int_{\mathfrak t_x^*M}e^{2\pi\mathbf i\langle\xi,X\rangle}\hat k(\xi)d\xi,
\end{equation}
where $X\in\mathfrak t_xM$ and $\xi\in\mathfrak t_x^*M$.
Hence, $d\xi$ is the translation invariant $1$-density on $\mathfrak t_x^*M$ with values in $|\Lambda_{\mathfrak t_xM}|$ corresponding to the canonical pairing between $|\Lambda_{\mathfrak t_x^*M}|$ and $|\Lambda_{\mathfrak t_xM}|$.
Given a homogeneous norm $|-|$ on $\mathfrak t_xM$, we may contract $d\xi$ with the fundamental vector field of the dilation action, $\frac\partial{\partial\lambda}|_{\lambda=1}\dot\delta'_\lambda$, and restrict to obtain a $1$-density on the unit sphere $\{\xi\in\mathfrak t_x^*M:|\xi|=1\}$ with values in $|\Lambda_{\mathfrak t_xM}|$ which will also be denoted by $d\xi$ and can be characterized by 
\begin{equation}\label{E:fubini}
\int_{\mathfrak t_x^*M}f(\xi)d\xi=\int_0^\infty\lambda^{n-1}d\lambda\int_{\xi\in\mathfrak t_x^*M,|\xi|=1}f(\dot\delta_\lambda'(\xi))d\xi,
\end{equation}
for all compactly supported smooth functions $f$ on $\mathfrak t_x^*M\setminus\{0\}$.
\end{remark}

\begin{proof}[Proof of Proposition~\ref{prop:meromorphic}]
To begin with, let $\Re(z)>(n+d)/r$.
We fix adapted exponential coordinates and consider the corresponding asymptotic expansion of the kernel $k_z$ along the diagonal as in Lemma~\ref{L:hf}\itemref{L:hf:c}, see \eqref{E:hf:asymhat}.
Using Fourier inversion formula, see \eqref{E:fourier.inv}, we may write
$$
k_z(x,x)
=\bigl(\phi^*(k_z|_V)\bigr)(o_x)
=\int_{\mathfrak t^*_xM}\mathcal F\bigl(\chi\cdot\phi^*(k_z|_V)\bigr)(\xi)d\xi.
$$
Splitting the integral at the unit sphere in $\mathfrak t_x^*M$ and using the asymptotic expansion in \eqref{E:hf:asymhat} this may be rewritten in the form
\begin{align}\label{E:continuation}
k_z(x,x)&=\int_{\xi\in\mathfrak t_x^*M,|\xi|\leq1}\mathcal F\bigl(\chi\cdot\phi^*(k_z|_V)\bigr)(\xi)d\xi
\notag
\\&\qquad+\int_{\xi\in\mathfrak t_x^*M, |\xi|\geq1}\left(\mathcal F\bigl(\chi\cdot\phi^*(k_z|_V)\bigr)-\sum_{j=0}^N\hat k_{j,z}\right)(\xi)d\xi
\\\notag
&\qquad+\sum_{j=0}^N\int_{\xi\in\mathfrak t_x^*M,|\xi|\geq1}\hat k_{j,z}(\xi)d\xi.
\end{align}
The continuation of the left hand side in \eqref{E:continuation} can be achieved by studying the right hand side.
Clearly, the first integral on the right hand side of \eqref{E:continuation} converges and defines a smooth section of $\eend(E)\otimes|\Lambda_M|$ which depends holomorphically on $z\in\Omega$ for the integrand is smooth and analytic in $z$.
The second integral converges for $\Re(z)>(d+n-N)/r$ and defines a smooth section of $\eend(E)\otimes|\Lambda_M|$ which depends holomorphically on these $z$ in view of the estimates \eqref{E:esti}.
Using \eqref{E:fubini} and the homogeneity of $\hat k_{j,z}$, we rewrite the remaining terms in the form
\begin{multline*}
\int_{\xi\in\mathfrak t_x^*M,|\xi|\geq1}\hat k_{j,z}(\xi)d\xi=\int_1^\infty\lambda^{d-rz-j+n-1}d\lambda\int_{\xi\in\mathfrak t_x^*M,|\xi|=1}\hat k_{j,z}(\xi)d\xi
\\=\frac1{rz-d-n+j}\int_{\xi\in\mathfrak t_x^*M,|\xi|=1}\hat k_{j,z}(\xi)d\xi.
\end{multline*}
Hence, this term admits a meromorphic continuation to the entire complex plane with a single pole located at $z_j=(d+n-j)/r$.
This pole is simple with residue 
$$
\res_{z=z_j}\int_{\xi\in\mathfrak t_x^*M,|\xi|\geq1}\hat k_{j,z}(\xi)d\xi
=\frac1r\int_{\xi\in\mathfrak t_x^*M,|\xi|=1}\hat k_{j,z_j}(\xi)d\xi.
$$
Combining these observation, we obtain the meromorphic extension of $k_z(x,x)$ and the first equality in \eqref{E:extres}.
The second equality in \eqref{E:extres} is well known.

For $\mathfrak R(z)>(n+d)/r$ the Schwartz kernel $k_z$ of the operator $R(z)$ is continuous, and we can compute the trace by
$$
\tr(R(z))=\int_{x\in M}\tr_E(k_z(x,x)).
$$
The meromorphic extension of $\tr(R(z))$ thus follows from the meromorphic extension of $k_z(x,x)$, and so does the formula for the residues.
\end{proof}

We can now define a functional on integer order pseudodifferential operators $A\in\Psi^\infty(E):=\bigcup_{k\in\mathbb Z}\Psi^k(E)$ by
\[
\tau(A):=r\cdot\res_{z=0}(\zeta_A(z)),
\]
where $\zeta_A(z)=\tr(AD^{-z})$, cf.\ Proposition~\ref{prop:meromorphic} and Lemma~\ref{L:hf:comp}.

The following corollary generalizes a result for CR manifolds obtained by R.~Ponge in his thesis, see \cite{P01}.

\begin{corollary}\label{cor:ncr}
The functional $\tau\colon\Psi^{\infty}(E)\rightarrow\mathbb C$ is a non-trivial trace on the algebra of integer order pseudodifferential operators.
More precisely, we have:
\begin{enumerate}[(a)]
\item\label{C:ncr:tr} 
$\tau([A,B])=0$ for all $A,B\in\Psi^\infty(E)$.
\item\label{C:ncr:indep}
$\tau(A)$ does not depend on the positive Rockland differential operator $D$.
\item\label{C:ncr:A}
If the order of $A\in\Psi^\infty(E)$ is less than $-n$ then $\tau(A)=0$.
\item\label{C:ncr:Apj}
If $A$ has order $d\geq-n$, and $\phi^*(k_A|_V)\sim\sum_{j=0}^\infty k_j$ is an asymptotic expansion of its kernel with $k_j$ essentially homogeneous of order $d-j$, chosen such that $(\delta_\lambda)_*k_{d+n}=\lambda^{-n}k_{d+n}+\lambda^{-n}\log|\lambda|p_A$ for $\lambda>0$ with $p_A\in\mathcal P^{-n}(\mathcal TM;E,E)$, then 
$$
\tau(A)=\int_M\tr_E(p_A|_M).
$$
\item\label{C:ncr:DOA}
If $B\in\Gamma^\infty(\eend(E))$ and $j\in\mathbb N_0$, then
\begin{equation}\label{E:tauAD}
\tau\left(BD^{(j-n)/r}\right)=\frac{r}{\Gamma((n-j)/r)}\int_M\tr_E(Bq_j),
\end{equation}
where $q_j$ denotes the term in the heat kernel asymptotics of $D$, that is $k_{e^{-tD}}(x,x)\sim\sum_{j=0}^\infty t^{(j-n)/r}q_j(x)$, see Theorem~\ref{T:heat}.
\item\label{C:ncr:measure}
The map $\mu\colon C^\infty(M)\to\mathbb C$, $\mu(f):=\tau(fD^{-\frac{n}{r}})=\frac{r}{\Gamma(n/r)}\int_Mf\tr_E(q_0)$ is a positive smooth measure on $M$.
\item\label{C:ncr:nt}
In particular, $\mu(1)=\tau(D^{-\frac{n}{r}})=r\cdot a_0/\Gamma(\frac nr)>0$, where $a_0$ is the leading term in the heat trace expansion of $D$, that is, $\tr(e^{-tD})\sim\sum_{j=0}^\infty t^{(j-n)/r}a_j$, see Corollary~\ref{C:trace}.
\end{enumerate}
\end{corollary}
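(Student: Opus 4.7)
My plan is to prove parts (c)--(g) using the tools assembled above, and then to derive the trace property (a) via a commutator argument. Parts (b) and the nontriviality of $\tau$ will emerge as by-products.

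First, I will dispose of (c) and (d) by applying Proposition~\ref{prop:meromorphic} to the holomorphic family $R(z) = AD^{-z} \in \Psi^{d-rz}(E)$, which is indeed holomorphic by Lemma~\ref{L:hf:comp}. Its trace has only simple poles at $z_j = (n+d-j)/r$; when $d < -n$ all these $z_j$ are strictly negative, so $\zeta_A$ is regular at $z = 0$ and $\tau(A) = 0$, establishing (c). For (d), the pole at $z_{n+d} = 0$ has residue $\tfrac{1}{r}\int_M \tr_E(p_{n+d}|_M)$, where $p_{n+d}$ is the logarithmic coefficient of the essentially homogeneous term $k_{n+d,z}|_{z=0}$ of $R(z)$. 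Since $D^0 = \id_E - P$ is the identity modulo the smoothing projection $P$ onto $\ker(D)$, the operator $R(0) = AD^0$ differs from $A$ by a smoothing operator, so their diagonal expansions coincide term by term and $p_{n+d} = p_A$. Multiplying by $r$ yields (d), and (b) then follows immediately since $p_A$ is intrinsic to $A$.

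Next, for (e) I will substitute $w = z + (n-j)/r$ to rewrite
$$
\tau(BD^{(j-n)/r}) = r\,\res_{w = (n-j)/r}\tr(BD^{-w}),
$$
and then run the Mellin transform argument of Corollary~\ref{C:zeta} with $B$ inserted into the heat trace. The expansion $k_t^D(x,x) \sim \sum_k t^{(k-n)/r}q_k(x)$ from Theorem~\ref{T:heat} yields $\tr(Be^{-tD}) \sim \sum_k t^{(k-n)/r}\int_M \tr_E(Bq_k)$, from which the residue at $w = (n-j)/r$ equals $\Gamma((n-j)/r)^{-1}\int_M \tr_E(Bq_j)$ provided $(n-j)/r \notin -\N_0$ (in the exceptional case both sides vanish, matching the zero of $1/\Gamma$). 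Part (f) is then the case $B = f\cdot\id_E$, $j = 0$, with positivity of $\mu$ coming from $q_0(x) > 0$ in Theorem~\ref{T:heat}; (g) specializes further to $f \equiv 1$ and uses $\int_M \tr_E(q_0) = a_0 > 0$ from Corollary~\ref{C:trace}, which also supplies the nontriviality of $\tau$.

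Finally, for the trace property (a), the standard commutator manipulation gives, for $\Re(z)$ sufficiently large that every operator in sight is trace class and cyclicity applies,
$$
\tr([A,B]D^{-z}) = \tr(ABD^{-z}) - \tr(AD^{-z}B) = \tr(A[B, D^{-z}]),
$$
an identity that extends meromorphically to a punctured neighbourhood of $z = 0$. Since $D^0 = \id_E - P$, we have $[B, D^{-z}]|_{z=0} = -[B, P]$, which is smoothing, so the holomorphic family $[B, D^{-z}]$ vanishes at $z = 0$ modulo smoothing. The hard part will be to upgrade this pointwise vanishing to the statement that $[B, D^{-z}]/z$ extends to a holomorphic family of Heisenberg pseudodifferential operators across $z = 0$ in the sense of Definition~\ref{D:hf}; concretely, one must show that in the kernel expansion of $[B, D^{-z}]$ each essentially homogeneous term depends holomorphically on $z$ and vanishes at $z = 0$, and that dividing by $z$ produces another admissible holomorphic family. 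Once this is established, the factorization $\tr(A[B, D^{-z}]) = z\cdot\tr\bigl(A\cdot([B,D^{-z}]/z)\bigr)$ shows that the left-hand side has at worst a removable singularity at $z = 0$: the simple pole supplied by Proposition~\ref{prop:meromorphic} is cancelled by the explicit factor of $z$. Consequently $\res_{z=0}\tr([A,B]D^{-z}) = 0$ and $\tau([A,B]) = 0$, completing the proof.
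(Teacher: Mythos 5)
Your proposal tracks the paper's proof closely. Parts (c)--(g) are handled exactly as in the paper, the only cosmetic difference being that for (e) you re-run the Mellin argument with $B$ inserted into the heat trace, whereas the paper quotes the pointwise residue formula \eqref{E:powers.res} from Theorem~\ref{T:powers} via $k_{BD^{(j-n)/r}D^{-z}}(x,x)=B(x)k_{D^{(j-n)/r-z}}(x,x)$; the two are equivalent. The one genuine divergence is (b): the paper proves $D$-independence directly, by observing that $AD^{-z/r}-A\tilde D^{-z/\tilde r}$ is a holomorphic family vanishing at $z=0$, hence of the form $zR(z)$, so its trace is regular there. You instead deduce (b) from the local formula (d), whose right-hand side visibly does not involve $D$. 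This is shorter and avoids a second invocation of the division-by-$z$ argument; note only that (d) covers orders $\geq-n$ so you need (c) for the remaining orders, and that all you need is independence of $D$ for a fixed choice of exponential coordinates --- coordinate-independence of $p_A$ is not required here (the paper in fact derives it \emph{from} the corollary in the remark that follows it).

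For (a) you follow the paper's commutator argument but explicitly leave open the step you call ``the hard part'': that a holomorphic family vanishing at $z=0$ (modulo smoothing) is $z$ times a holomorphic family. The paper asserts this without proof, so you have not missed anything the paper supplies; still, since you flagged it, here is how to close it. Let $\mathbb K_z\in\mathcal K_\Omega(\mathbb TM;E,E)$ be an essentially homogeneous extension of the kernels $k_z$ of $[B,D^{-z}]$. Since $D$ is strictly positive, $P=0$ and $\ev_1(\mathbb K_0)$ is the kernel of $[B,\id]=0$; the injectivity of $\ev_1\colon\Sigma^{s}(\mathbb TM;E,E)\to\Psi^{s}(E)/\mathcal O^{-\infty}(E)$ in diagram \eqref{D:SSS} then forces $\mathbb K_0\in\mathcal K^\infty(\mathbb TM;E,E)$. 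Consequently $\mathbb K'_z:=(\mathbb K_z-\mathbb K_0)/z$ extends holomorphically across $z=0$, satisfies $\ev_1(\mathbb K'_z)=k_z/z$, and is essentially homogeneous of order $s(z)$ because $\bigl((\delta^{\mathbb TM}_\lambda)_*\mathbb K_z-\lambda^{s(z)}\mathbb K_z\bigr)-\bigl((\delta^{\mathbb TM}_\lambda)_*\mathbb K_0-\lambda^{s(z)}\mathbb K_0\bigr)$ is a holomorphic curve in $\mathcal K^\infty(\mathbb TM;E,E)$ vanishing at $z=0$. Alternatively, one can bypass division by $z$ altogether: by \eqref{E:restr} the residue of $\tr(R(z))$ at $z_j$ depends only on the $j$-th homogeneous term of the expansion of the single operator $R(z_j)$, and this term vanishes whenever $R(z_j)$ is smoothing.
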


\begin{proof}
To see \itemref{C:ncr:tr}, we observe that, for $\Re(z)$ very large,
\[
\tr([A,B]D^{-z})=\tr(A[B,D^{-z}])+\tr([AD^{-z},B])=\tr(A[B,D^{-z}]).
\]
Note that $[B,D^{-z}]$ is a holomorphic family which vanishes at $z=0$, for $D$ is assumed to be strictly positive.
Hence, the holomorphic family $A[B,D^{-z}]$ is of the form $zR(z)$ for some holomorphic family $R(z)$. 
As $\tr(R(z))$ can at most have a simple pole at $z=0$, the function $\tr(A[B,D^{-z}])$ has no pole there and hence $\tau([A,B])=0$.

To see \itemref{C:ncr:indep}, let $\tilde D\in\Psi^{\tilde r}(E)$ be another strictly positive Rockland differential operator of order $\tilde r>0$, and suppose $A$ is of order $k$.
Hence, $AD^{-z/r}-A\tilde D^{-z/\tilde r}$ is a holomorphic family of order $k-z$ which vanishes at $z=0$.
As before, we conclude that $\tr\bigl(AD^{-z/r}-A\tilde D^{-z/\tilde r}\bigr)$ is holomorphic at $z=0$.
Consequently,
$$
\res_{z=0}\tr(AD^{-z/r})=\res_{z=0}\tr(A\tilde D^{-z/\tilde r}).
$$
Equivalently, $r\cdot\res_{z=0}\tr(AD^{-z})=\tilde r\cdot\res_{z=0}\tr(A\tilde D^{-z})$.

To see \itemref{C:ncr:A}, observe that for operators $A$ of order less than $-n$ the zeta function $\zeta_A(z)$ is clearly analytic at $z=0$ and hence has no residue there.

Part \itemref{C:ncr:Apj} follows immediately from the formula for the residue in \eqref{E:restr}.

To see \itemref{C:ncr:DOA}, suppose $B\in\Gamma^\infty(\eend(E))$ and observe that we clearly have
$$
k_{BD^{(j-n)/r}D^{-z}}(x,x)=B(x)k_{D^{(j-n)/r-z}}(x,x),
$$
for all $x\in M$.
Using Theorem~\ref{T:powers}, we obtain
$$
\res_{z=0}\left(k_{BD^{(j-n)/r}D^{-z}}(x,x)\right)
=B(x)\res_{z=(n-j)/r}\left(k_{D^{-z}}(x,x)\right)
=\frac{B(x)q_j(x)}{\Gamma((n-j)/r)},
$$
see Equation~\ref{E:powers.res}.
Applying $\tr_E$ and integrating over $x\in M$, we obtain \eqref{E:tauAD}.

The assertion about $\mu(f)$ in \itemref{C:ncr:measure} follows from \eqref{E:tauAD} by specializing to the multiplication operator $B=f$, where $f\in C^\infty(M)$, and observing that $\tr_E(q_0(x))>0$ at each $x\in M$, for we have $q_0(x)>0$ according to Theorem~\ref{T:heat}.
Hence, $\mu$ is a positive functional.

The last statement about $\tau(D^{-\frac{n}{r}})$ in \itemref{C:ncr:nt} follows from \eqref{E:tauAD} by specializing to $A=\id_E$ and using $\int_M\tr_E(q_0)=a_0>0$, see Corollary~\ref{C:trace}.
In particular, $\tau$ is non-trivial.
\end{proof}

\begin{remark}
Using \eqref{E:PnLM}, we may identify the logarithmic term $p_A=p_A|_M$ associated with a pseudodifferential operator $A\in\Psi^\infty(E)$, see Corollary~\ref{cor:ncr}\itemref{C:ncr:Apj}, as an $\eend(E)$ valued density on $M$.
This Wodzicki density is intrinsic to $A$, i.e., it does not depend on the exponential coordinates used for the asymptotic expansion.
The independence follows from the evident formula $p_{BA}=Bp_A$ for all $B\in\Gamma^\infty(\eend(E))$, the expression for the residue in Corollary~\ref{cor:ncr}\itemref{C:ncr:Apj}, and the intrinsic definition of the residue using $D$.
\end{remark}

In case of a trivially filtered manifold $M$ the trace $\tau$ is the non-commutative residue which was introduced in Wodzicki and Guillemin \cite{Wo87,Gu85}, with many applications including to index theory \cite{CM95}.
In Connes \cite{Co88} the non-commutative residue is shown to coincide with Dixmier trace for classical pseudodifferential operators of order $-n$.
We expect such a relationship to hold also in Heisenberg calculus extending part \itemref{C:ncr:measure} of Corollary~\ref{cor:ncr}.

\section{Weyl's law for Rumin--Seshadri operators}\label{S:Weyl}

In this section we will work out the eigenvalue asymptotics for Rumin--Seshadri operators associated with Rockland sequences of differential operators whose Heisenberg principle symbol sequence is the same at each point.
More precisely, we will assume that the principle symbol sequence, the $L^2$ inner product, and the volume density on any two osculating groups are simultaneously isometric. 
In this situation the constant appearing in Weyl's law is the product of the volume of the underlying manifold with a constant depending on the isometry class of the principal symbol sequence, see Corollary~\ref{C:WeylRS} below.
Similar formulas for CR and contact geometry can be found in \cite[Section~6]{P08}.

In the second part of this section we will specialize to a particular parabolic geometry in five dimensions associated with the exceptional Lie group $G_2$.
Every irreducible representation of $G_2$ gives rise to a Rockland sequence of differential operators known as a (curved) BGG sequence \cite{CSS01}.
Moreover, there is a distinguished class of fiberwise Hermitian inner products such that the Heisenberg principal symbol sequences, at any two points, are indeed isometric. 
In this situation, the constant in Weyl's law is universal, depending only on the representation of $G_2$, see Corollary~\ref{C:235} below.

\subsection{Rumin--Seshadri operators}

Let $E_i$ be vector bundles over a closed filtered manifold $M$.
Consider a sequence of differential operators 
\begin{equation}\label{E:rockseq}
\cdots\to\Gamma^\infty(E_{i-1})\xrightarrow{A_{i-1}}\Gamma^\infty(E_i)\xrightarrow{A_i}\Gamma^\infty(E_{i+1})\to\cdots
\end{equation}
where $A_i\in\DO^{r_i}(E_i,E_{i+1})$ is of Heisenberg order at most $r_i\geq1$.
Recall that such a sequence is called Rockland if the Heisenberg principal symbol sequence
$$
\cdots\to\mathcal H_\infty\otimes E_{i-1,x}\xrightarrow{\pi(\sigma_x^{r_{i-1}}(A_{i-1}))}\mathcal H_\infty\otimes E_{i,x}\xrightarrow{\pi(\sigma^{r_i}_x(A_i))}\mathcal H_\infty\otimes E_{i+1,x}\to\cdots
$$
is (weakly) exact, for each $x\in M$ and every non-trivial irreducible unitary representation $\pi\colon\mathcal T_xM\to U(\mathcal H)$ on a Hilbert space $\mathcal H$, see \cite[Definition~2.14]{DH17}.
Here $\mathcal H_\infty$ denotes the subspace of smooth vectors.

Fix a volume density $dx$ on $M$.
Moreover, let $h_i$ be a fiberwise Hermitian metric on $E_i$, and consider the associated standard $L^2$-inner product
\begin{equation}\label{E:hidx}
\llangle\psi_1,\psi_2\rrangle_{L^2(E_i)}:=\int_Mh_i(\psi_1,\psi_2)dx,
\end{equation}
where $\psi_1,\psi_2\in\Gamma^\infty(E_i)$.
Let $A_i^*\in\DO^{r_i}(E_{i+1},E_i)$ denote the formal adjoint characterized by $\llangle A_i^*\phi,\psi\rrangle_{L^2(E_i)}=\llangle\phi,A_i\psi\rrangle_{L^2(E_{i+1})}$ for all $\psi\in\Gamma^\infty(E_i)$ and $\phi\in\Gamma^\infty(E_{i+1})$, cf.\ \cite[Remark~2.5]{DH17}.

Assume $r_i\geq1$ and choose positive integers $s_i$ such that 
\begin{equation}\label{E:risi}
r_{i-1}s_{i-1}=r_is_i=:\kappa.
\end{equation}
It is easy to see that the Rumin--Seshadri operator $\Delta_i\in\DO^{2\kappa}(E_i)$ defined by 
\begin{equation}\label{E:RS}
\Delta_i:=\bigl(A_{i-1}A_{i-1}^*\bigr)^{s_{i-1}}+\bigl(A_i^*A_i\bigr)^{s_i}
\end{equation}
is Rockland, see \cite{RS12} and \cite[Lemma~2.18]{DH17}.
Clearly, $\Delta_i$ is formally selfadjoint and non-negative.

\begin{definition}[Model sequence]\label{D:model}
(a) A \emph{model sequence} consists of a finite dimensional graded nilpotent Lie algebra $\noe$, a volume density $\mu\in|\Lambda_\noe|$, finite dimensional Hermitian vector spaces $F_i$, integers $r_i\geq1$, and $B_i\in\mathcal U_{-r_i}(\mathfrak\noe)\otimes\hom(F_i,F_{i+1})$.
A model sequence gives rise to a sequence of left invariant homogeneous differential operators on the corresponding simply connected Lie group $N$,
$$
\cdots\to C^\infty(N,F_{i-1})\xrightarrow{B_{i-1}}C^\infty(N,F_i)\xrightarrow{B_i}C^\infty(N,F_{i+1})\to\cdots,
$$
and provides left invariant $L^2$ inner products on the spaces $C^\infty(N,F_i)$ which are induced by the invariant volume density on $N$ corresponding to $\mu$ and the Hermitian inner products on $F_i$, cf.\ \eqref{E:L2}.

(b) A model sequence is called \emph{Rockland} if 
$$
\cdots\to\mathcal H_\infty\otimes F_{i-1}\xrightarrow{\pi(B_{i-1})}\mathcal H_\infty\otimes F_i\xrightarrow{\pi(B_i)}\mathcal H_\infty\otimes F_{i+1}\to\cdots
$$
is exact for every non-trivial irreducible unitary representation $\pi\colon N\to U(\mathcal H)$ where $\mathcal H_\infty$ denotes the subspace of smooth vectors.

(c) Two model sequences, $\bigl(\noe,\mu,B_i\in\mathcal U_{-r_i}(\mathfrak\noe)\otimes\hom(F_i,F_{i+1})\bigr)$ and $\bigl(\noe',\mu',B_i'\in\mathcal U_{-r_i'}(\mathfrak\noe')\otimes\hom(F_i',F_{i+1}')\bigr)$ are said to be \emph{isometric} if $r_i=r_i'$, and there exists an isomorphism of graded Lie algebras $\dot\varphi\colon\noe\to\noe'$ mapping $\mu$ to $\mu'$, and there exist unitary isomorphisms $\phi_i\colon F_i\to F_i'$ such that the induced isomorphisms
$$
\dot\varphi\otimes\hom(\phi_i^{-1},\phi_{i+1})\colon\mathcal U_{-r_i}(\mathfrak\noe)\otimes\hom(F_i,F_{i+1})\to\mathcal U_{-r_i'}(\mathfrak\noe')\otimes\hom(F_i',F_{i+1}')
$$ 
map $B_i$ to $B_i'$, for all $i$.
\end{definition}

If $E_i$ are Hermitian vector bundles over a filtered manifold $M$ equipped with a volume density, then the Heisenberg principal symbols of a sequence of differential operators $A_i\in\DO^{r_i}(E_i,E_{i+1})$ give rise to a model sequence 
\begin{equation}\label{E:hps}
\bigl(\mathfrak t_xM,\mu_x,\sigma^{r_i}_x(A_i)\in\mathcal U_{-r_i}(\mathfrak t_xM)\otimes\hom(E_{x,i},E_{x,i+1})\bigr)
\end{equation}
at every point $x\in M$.
Here $\mu_x\in|\Lambda_{\mathfrak t_xM}|$ denotes the volume density induced by $dx$ via the canonical isomorphism $|\Lambda_{\mathfrak t_xM}|=|\Lambda_{T_xM}|$.

The following generalizes \cite[Proposition~6.1.5]{P08}.

\begin{corollary}\label{C:WeylRS}
If $\bigl(\noe,\mu,B_i\in\mathcal U_{-r_i}(\mathfrak\noe)\otimes\hom(F_i,F_{i+1})\bigr)$ is a Rockland model sequence, see Definition~\ref{D:model}, then there exist constants $\alpha_i>0$ with the following property.
Suppose $E_i$ are Hermitian vector bundles over a closed filtered manifold $M$, let $dx$ be a volume density on $M$, and suppose $A_i\in\DO^{r_i}(E_i,E_{i+1})$ is a sequence of differential operators such that the Heisenberg principal symbol sequence, see \eqref{E:hps}, is isometric to the model sequence at each point $x\in M$.
Moreover, fix positive integers $s_i$ as in \eqref{E:risi}, and let $\Delta_i\in\DO^{2\kappa}(E_i)$ denote the associated Rumin--Seshadri operators, see \eqref{E:hidx} and \eqref{E:RS}.
Then, as $\lambda\to\infty$,
\begin{equation}\label{E:WeylRS}
\sharp\left\{\begin{array}{c}\textrm{eigenvalues of $\Delta_i$ less than $\lambda$}\\\textrm{counted with multiplicities}\end{array}\right\}
\sim\alpha_i\cdot\vvol(M)\lambda^{n/2\kappa}.
\end{equation}
Here $\vvol(M):=\int_Mdx$, and $n$ denotes the homogeneous dimension of $\noe$.
\end{corollary}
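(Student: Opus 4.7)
The plan is to apply Corollary~\ref{C:weyl} to the Rumin--Seshadri operator $\Delta_i$, which is formally selfadjoint, non-negative, Rockland, and of even Heisenberg order $r=2\kappa$. This yields
$$
\sharp\{j:\lambda_j\leq\lambda\}\sim\frac{a_0^{\Delta_i}\,\lambda^{n/2\kappa}}{\Gamma(1+n/2\kappa)},\qquad\lambda\to\infty,
$$
where $a_0^{\Delta_i}=\int_M\tr_{E_i}(q_0^{\Delta_i})$ and $q_0^{\Delta_i}\in\Gamma^\infty(|\Lambda_M|\otimes\eend(E_i))$ is the leading term in the heat kernel expansion of $\Delta_i$. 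Hence it suffices to exhibit a positive constant $\beta_i$, depending only on the isometry class of the model sequence, such that $\tr_{E_i}(q_0^{\Delta_i}(x))=\beta_i\,dx|_x$ at each $x\in M$; then $a_0^{\Delta_i}=\beta_i\vvol(M)$ and \eqref{E:WeylRS} holds with $\alpha_i:=\beta_i/\Gamma(1+n/2\kappa)$.

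For the pointwise identification, first note that by the compatibility of Heisenberg principal symbols with composition and formal adjoints, see~\eqref{E:sAB}, the symbol $\sigma_x^{2\kappa}(\Delta_i)$ is obtained from the pointwise principal symbol sequence~\eqref{E:hps} by the same algebraic recipe used to build $\Delta_i$ itself. Remark~\ref{R:symbolheat} then identifies
$$
q_0^{\Delta_i}(x)=k_1^{\sigma_x^{2\kappa}(\Delta_i)}(o_x)\in|\Lambda_{\mathfrak t_xM}|\otimes\eend(E_{i,x}),
$$
where the right hand side is the time-one heat kernel of $\sigma_x^{2\kappa}(\Delta_i)$ viewed as a left invariant, formally selfadjoint, non-negative Rockland operator on the osculating group $\mathcal T_xM$ with respect to the data $(\mu_x,h_{i,x})$.

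By the isometry assumption in Definition~\ref{D:model}, for every $x\in M$ there exist a graded Lie algebra isomorphism $\dot\varphi_x\colon\mathfrak t_xM\to\noe$ with $(\dot\varphi_x)_*\mu_x=\mu$ and unitary isomorphisms $\phi_{i,x}\colon E_{i,x}\to F_i$ carrying $\sigma_x^{r_i}(A_i)$ to $B_i$. Since this data simultaneously intertwines the formal adjoint (which is determined by $\mu$ and the Hermitian inner products on $F_i$ alone), it also carries $\sigma_x^{2\kappa}(\Delta_i)$ to the model Rumin--Seshadri operator $\tilde\Delta_i:=(B_{i-1}B_{i-1}^*)^{s_{i-1}}+(B_i^*B_i)^{s_i}$ on $N$. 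By the uniqueness characterization of the heat kernel recorded in Remark~\ref{R:symbolheat}, this intertwining carries $k_1^{\sigma_x^{2\kappa}(\Delta_i)}(o_x)$ to $k_1^{\tilde\Delta_i}(o_N)\in|\Lambda_N|_{o_N}\otimes\eend(F_i)$. Taking $\tr_{F_i}$ and dividing by $\mu|_{o_N}$ yields a scalar which Theorem~\ref{T:heat} guarantees is strictly positive, so
$$
\beta_i:=\tr_{F_i}\bigl(k_1^{\tilde\Delta_i}(o_N)\bigr)/\mu|_{o_N}>0
$$
has the required property and depends only on the isometry class of the model sequence.

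The main obstacle is the functorial step in the preceding paragraph: one must verify that the operator-level isomorphism induced by $(\dot\varphi_x,\phi_{i,x})$ transports heat kernels covariantly. This reduces to the observation that the heat equation~\eqref{E:kequ}, the initial condition~\eqref{E:kinit}, and the Schwartz class requirement~\eqref{E:kS} are all preserved by the pullback along such an isomorphism, after which the uniqueness in Remark~\ref{R:symbolheat} gives the identification. A secondary cautionary point is that, even though $\tilde\Delta_i$ lives on the non-compact group $N$ and no spectral theorem is available, the heat kernel $k_t^{\tilde\Delta_i}$ is nevertheless defined and uniquely characterized at the osculating-group level exactly as in Remark~\ref{R:symbolheat}, so $\beta_i$ is unambiguous.
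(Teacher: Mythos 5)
Your argument follows the paper's own proof almost verbatim: apply Corollary~\ref{C:weyl}, identify $q_0^{\Delta_i}(x)$ with the time-one heat kernel of the symbol at the origin via \eqref{E:q0x}, and use the pointwise isometry with the model sequence (which intertwines formal adjoints because these are determined by $\mu$ and the Hermitian inner products) to transport everything to the model operator $\tilde\Delta_i=(B_{i-1}B_{i-1}^*)^{s_{i-1}}+(B_i^*B_i)^{s_i}$ on $N$, invoking the uniqueness of the symbol heat kernel from Remark~\ref{R:symbolheat}. All of this is correct and is exactly what the paper does.

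There is, however, one omission. The corollary asserts the existence of constants $\alpha_i$ depending only on the model sequence, \emph{before} the integers $s_i$ are fixed; since \eqref{E:risi} determines the $s_i$ only up to a common positive integer multiple, the claim includes that $\alpha_i$ is independent of this choice. Your $\beta_i$ is defined through $\tilde\Delta_i$, which manifestly depends on the $s_i$, so as written you only obtain a constant $\alpha_i=\alpha_i(s_\bullet)$. The paper closes this gap with a short extra argument: if $s_i'=us_i$, then because $\sigma_x^{k_i}(A_i)\sigma_x^{k_{i-1}}(A_{i-1})=0$ for a Rockland sequence, the operator $\Delta_i'$ built from the $s_i'$ has the same Heisenberg principal symbol as $(\Delta_i)^u$, and the eigenvalue counting function of $(\Delta_i)^u$ at $\lambda$ equals that of $\Delta_i$ at $\lambda^{1/u}$; comparing the two Weyl asymptotics forces the constants to agree. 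You should add this step (or an equivalent one) to fully establish the statement.
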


\begin{proof}
Let $x\in M$.
By assumption, there exists an isomorphism of graded nilpotent Lie algebras $\dot\varphi\colon\mathfrak t_xM\to\noe$ mapping the volume density induced by $dx$ on $\mathfrak t_xM$ to the volume density $\mu$ on $\noe$, 
and there exist unitary isomorphisms of vector spaces $\phi_i\colon E_{x,i}\to F_i$ such that the induced isomorphisms 
$$
\dot\varphi\otimes\hom(\phi_i^{-1},\phi_{i+1})\colon\mathcal U_{-r_i}(\mathfrak t_xM)\otimes\hom(E_{x,i},E_{x,i+1})\to\mathcal U_{-r_i}(\mathfrak\noe)\otimes\hom(F_i,F_{i+1})
$$
map $\sigma^{r_i}_x(A_i)$ to $B_i$.
We conclude that the induced isomorphism
$$
\dot\varphi\otimes\hom(\phi_i^{-1},\phi_i)\colon\mathcal U_{-2\kappa}(\mathfrak t_xM)\otimes\eend(E_{x,i})\to\mathcal U_{-2\kappa}(\mathfrak\noe)\otimes\eend(F_i)
$$
maps $\sigma^{2\kappa}_x(\Delta_i)$ to $C_i:=(B_{i-1}B_{i-1}^*)^{s_{i-1}}+(B_i^*B_i)^{s_i}$.
Hence, the fundamental solutions of the heat equation are related by
\begin{equation}\label{E:qwerty}
k_t^{\sigma^{2\kappa}_x(\Delta_i)}=\varphi^*\bigl(\phi_i^{-1}k_t^{C_i}\phi_i\bigr)
\end{equation}
where $\varphi\colon\mathcal T_xM\to N$ denotes the isomorphism of Lie groups integrating $\dot\varphi$.
Let $\alpha_i>0$ the unique number such that 
\begin{equation}\label{E:alphai}
\tr_{F_i}\bigl(k^{C_i}_1(o_N)\bigr)=\alpha_i\Gamma(1+n/2\kappa)\mu.
\end{equation}
Note that $\alpha_i$ only depends on the model sequence and (potentially) on the integers $s_i$, but not on the actual geometry.
From \eqref{E:qwerty} and \eqref{E:q0x}, we obtain
$$
\tr_{E_i}\bigl(q_0^{\Delta_i}\bigr)=\alpha_i\Gamma(1+n/2\kappa)dx.
$$
where $q_0^{\Delta_i}\in\Gamma^\infty\bigl(\eend(E_i)\otimes|\Lambda_M|\bigr)$ is as in Theorem~\ref{T:heat}.
Hence, for the corresponding $a_0^{\Delta_i}=\int_M\tr_{E_i}\bigl(q_0^{\Delta_i}\bigr)$, see Corollary~\ref{C:trace}, we obtain
$$
a_0^{\Delta_i}=\alpha_i\Gamma(1+n/2\kappa)\vvol(M).
$$
The statement in \eqref{E:WeylRS} thus follows from Corollary~\ref{C:weyl}.

It remains to show that the constants $\alpha_i$ do not depend on the choice of positive integers $s_i$ as in \eqref{E:risi}.
To this end, suppose $u\in\N$ and consider $s_i':=us_i$.
Then $r_{i-1}s_{i-1}'=r_is_i'=\kappa'$ with $\kappa'=u\kappa$, cf.~\eqref{E:risi}.
Since $A_i$ is a Rockland sequence, we have $\sigma^{k_i}_x(A_i)\sigma^{k_{i-1}}_x(A_{i-1})=0$.
Hence, $\Delta'_i:=(A_{i-1}A_{i-1}^*)^{s_{i-1}'}+(A_i^*A_i)^{s_i'}$ and $(\Delta_i)^u$ have the same Heisenberg principal symbol, i.e.\ $\sigma^{2\kappa'}_x(\Delta_i')=\sigma_x^{2u\kappa}((\Delta_i)^u)$, and thus the constants in their Weyl asymptotics coincide, see Corollary~\ref{C:weyl}, Remark~\ref{R:loccomp}, and \eqref{E:q0x}.
Clearly, the number of eigenvalues less than $\lambda$ of $(\Delta_i)^u$ coincides with the number of eigenvalues of $\Delta_i$ which are smaller than $\lambda^{1/u}$.
From \eqref{E:WeylRS} we thus obtain
$$
\sharp\left\{\begin{array}{c}\textrm{eigenvalues of $\Delta_i'$ less than $\lambda$}\\\textrm{counted with multiplicities}\end{array}\right\}
\sim\alpha_i\cdot\vvol(M)\lambda^{n/2\kappa'}.
$$
We conclude that the constant $\alpha_i$ does not depend on the choice of $s_i$.
\end{proof}

\subsection{Generic rank two distributions in dimension five}\label{SS:235}

In the remaining part of this section we will specialize to a particular geometry in five dimensions and discuss certain natural Rockland sequences over it, known as curved BGG sequences, which have been constructed by \v Cap, Slov\'ak and Sou\v cek, see \cite{CSS01}.

Let $M$ be $5$-manifold, and suppose $H\subseteq TM$ is a smooth subbundle of rank two.
Recall that $H$ is said to be of Cartan type \cite{BH93} if it is bracket generating with growth vector $(2,3,5)$.
More precisely, $H$ is of Cartan type if every point in $M$ admits an open neighborhood $U$ and there exist sections $X,Y\in\Gamma^\infty(H|_U)$ such that $X,Y,[X,Y],[X,[X,Y]],[Y,[X,Y]]$ is a frame of $TM|_U$.
Putting $T^{-1}M:=H$ and denoting the rank three bundle spanned by Lie brackets of sections of $H$ by $T^{-2}M:=[H,H]$, the $5$-manifold $M$ thus becomes a filtered manifold,
\begin{equation}\label{E:TMG2}
TM=T^{-3}M\supseteq T^{-2}M\supseteq T^{-1}M\supseteq T^0M=0.
\end{equation}
The osculating algebras $\mathfrak t_xM$ are all isomorphic to the graded nilpotent Lie algebra $\noe=\noe_{-3}\oplus\noe_{-2}\oplus\noe_{-1}$, uniquely characterized (up to isomorphism) by the fact that $\noe_{-1}$ admits a basis $\xi,\eta$ such that $[\xi,\eta]$ is a basis of $\noe_{-2}$ and $[\xi,[\xi,\eta]],[\eta,[\xi,\eta]]$ forms a basis of $\noe_{-3}$.

Rank two distributions of Cartan type are also known as generic rank two distributions in dimension five, see \cite{S08,CS09a,HS09,HS11,SW17}, the condition on $H$ being open with respect to the $C^2$-topology.
Their history can be traced back to Cartan's celebrated ``five variables paper'' \cite{C10}.
Whether a 5-manifold admits a Cartan distribution is well understood in the open case, see \cite[Theorem~2]{DH16}.
For closed 5-manifolds, however, this problem remains open and has served as a major motivation to develop the analysis in this paper.

A rank two distributions of Cartan type can equivalently be described as regular normal Cartan geometry of type $(G,P)$ where $G$ denotes the split real form of the exceptional Lie group $G_2$ and $P$ denotes a maximal parabolic subgroup corresponding to the shorter root, see \cite{C10,S08} and \cite[Theorem~3.1.14 and Section~4.3.2]{CS09}.
The Lie algebra of $G$ admits a grading,
\begin{equation}\label{E:g2}
\goe=\goe_{-3}\oplus\goe_{-2}\oplus\goe_{-1}\oplus\goe_0\oplus\goe_1\oplus\goe_2\oplus\goe_3,
\end{equation}
i.e.\ $[\goe_i,\goe_j]\subseteq\goe_{i+j}$ for all $i$ and $j$, such that
$$
P=\{g\in G\mid\forall i:\Ad_g(\goe^i)\subseteq\goe^i\},
$$
where $\goe^i:=\bigoplus_{i\leq j}\goe_j$ denotes the associated filtration.
The Lie algebra of $P$ is $\poe=\goe_0\oplus\goe_1\oplus\goe_2\oplus\goe_3$.
The corresponding Levi group, 
\begin{equation}\label{E:G0}
G_0=\{g\in G\mid\forall i:\Ad_g(\goe_i)=\goe_i\}\cong\GL(2,\R),
\end{equation}
has Lie algebra $\goe_0\cong\gl(2,\R)$.
Furthermore, $\poe_+=\goe_1\oplus\goe_2\oplus\goe_3$ is a nilpotent ideal in $\poe$ and $P_+:=\exp(\poe_+)$ is a closed normal subgroup in $P$ such that the inclusion $G_0\subseteq P$ induces a natural isomorphism of groups, $G_0=P/P_+$. 
The graded nilpotent Lie algebra $\goe_-:=\goe_{-3}\oplus\goe_{-2}\oplus\goe_{-1}$ is isomorphic to $\noe$.

The Cartan geometry consists of a principal $P$-bundle $\mathcal G\to M$ and a regular Cartan connection $\omega\in\Omega^1(\mathcal G;\goe)$ satisfying a normalization condition, see \cite{CS09}.
The Cartan connection induces an isomorphism $TM=\mathcal G\times_P\goe/\poe$.
By regularity, the filtration on $TM$, see \eqref{E:TMG2}, coincides with the filtration induced by the filtration $\goe/\poe=\goe^{-3}/\poe\supseteq\goe^{-2}/\poe\supseteq\goe^{-1}/\poe\supseteq\goe^0/\poe=0$.
Note that $P_+$ acts trivially on the associated graded $\gr(\goe/\poe)$, and the inclusion $\goe_-\subseteq\goe$ induces an isomorphism of $G_0=P/P_+$ modules, $\goe_-=\gr(\goe/\poe)$.
Regularity also implies that the induced isomorphism $\mathfrak tM=\gr(TM)=\mathcal G\times_P\gr(\goe/\poe)=\mathcal G_0\times_{G_0}\goe_-$ is an isomorphism of bundles of graded nilpotent Lie algebras.
Here $\mathcal G_0:=\mathcal G/P_+$ is considered as a principal $G_0$-bundle over $M$.

The canonically associated Cartan geometry permits to construct natural sequences of differential operators over $M$ known as curved BGG sequences, see \cite{CSS01} and \cite{CS12,CD01}.
Every finite dimensional complex representation $\rho\colon G\to\GL(\mathbb E)$ gives rise to a sequence of natural differential operators,
\begin{equation}\label{E:BGG}
0\to\Gamma(E_0)\xrightarrow{D_0}\Gamma(E_1)\xrightarrow{D_1}\Gamma(E_2)\xrightarrow{D_2}\Gamma(E_3)\xrightarrow{D_3}\Gamma(E_4)\xrightarrow{D_5}\Gamma(E_5)\to0
\end{equation}
where $E_i:=\mathcal G_0\times_{G_0}H^i(\goe_-;\mathbb E)$ and $H^i(\goe_-;\mathbb E)$ denotes the $i$-th Lie algebra cohomology with coefficients in the representation of $\goe_-$ obtained by restricting $\rho$.
In \cite[Corollary~4.23 and Example~4.24]{DH17} it has been shown that this is a Rockland sequence for every irreducible representation $\rho$.
We will denote the Heisenberg order of $D_i$ by $k_i$.

Let $\theta$ be a Cartan involution on $\goe$ such that 
\begin{equation}\label{E:theta}
\theta(\goe_i)=\goe_{-i}
\end{equation} 
for all $i$, cf.\ \cite[Proof of Proposition~3.3.1]{CS09}.
Denoting the Killing form of $\goe$ by $B$, we obtain a Euclidean inner product on $\goe$, given by $B_\theta(X,Y):=-B(X,\theta(Y))$ where $X,Y\in\goe$.
The summands in the decomposition \eqref{E:g2} are orthogonal with respect to $B_\theta$.
Let $\Theta$ denote the global Cartan involution on $G$ corresponding to $\theta$.
Then $K:=\{g\in G:\Theta(g)=g\}\cong\bigl(\SU(2)\times\SU(2)\bigr)/\Z_2$ is a maximal compact subgroup of $G$, and $B_\theta$ is invariant under $K$.
Moreover, $\Theta$ restricts to a Cartan involution on $G_0$, and $K_0:=\{g\in G_0:\Theta(g)=g\}\cong O(2,\R)$ is a maximal compact subgroup of $G_0$.
Let $h$ be a Hermitian inner product on $\mathbb E$ such that 
\begin{equation}\label{E:h}
h(\rho'(X)v,w)=-h(v,\rho'(\theta(X))w)
\end{equation} 
for all $X\in\goe$ and $v,w\in\mathbb E$ where $\rho'\colon\goe\to\gl(\mathbb E)$ denotes the Lie algebra representation corresponding to $\rho$, cf.\ \cite[Proof of Proposition~3.3.1]{CS09}.
In particular, $h$ is invariant under the action of $K$.
We equip the $G_0$-module $C^i(\goe_-;\mathbb E)=\Lambda^i\goe_-^*\otimes\mathbb E$ with the Hermitian inner product $h_i$ induced by $B_\theta$ and $h$.
Note that $h_i$ is invariant under $K_0$.
These inner products play an important role in the construction of the curved BGG sequences since Kostant's codifferential, $\partial^*\colon C^{i+1}(\goe_-;\mathbb E)\to C^i(\goe_-;\mathbb E)$, is adjoint to the Chevalley--Eilenberg codifferential $\partial\colon C^i(\goe_-;\mathbb E)\to C^{i+1}(\goe_-;\mathbb E)$ with respect to $h_i$, see \cite[Proposition~3.3.1]{CS09}.
The induced $K_0$-invariant Hermitian inner product on $H^i(\goe_-;\mathbb E)$ will also be denoted by $h_i$.
We let $\mu\in|\Lambda_{\goe_-}|$ denote the volume density associated with the Euclidean inner product on $\goe_-$ induced by $B_\theta$.
Clearly, $\mu$ is $K_0$-invariant.

To obtain Hermitian inner products on the bundles $E_i$, we choose a reduction of the structure group of $\mathcal G_0\to M$ to $K_0\subseteq G_0$.
This is a principal $K_0$-bundle $\mathcal K_0\to M$ together with a $K_0$-equivariant bundle map $\mathcal K_0\subseteq\mathcal G_0$.
This choice amounts to fixing a fiberwise Euclidean metric on the rank two bundle $H$.
It provides an isomorphism of principal $G_0$-bundles, $\mathcal G_0=\mathcal K_0\times_{K_0}G_0$, and thus
\begin{equation}\label{E:Eiassoc}
E_i=\mathcal G_0\times_{G_0}H^i(\goe_-;\mathbb E)=\mathcal K_0\times_{K_0}H^i(\goe_-;\mathbb E).
\end{equation}
Moreover,
$$
\gr(TM)=\mathcal G_0\times_{G_0}\goe_-=\mathcal K_0\times_{K_0}\goe_-.
$$
We equip $E_i$ with the fiberwise Hermitian metric induced from the $K_0$-invariant Hermitian metric on $H^i(\goe_-;\mathbb E)$, and we equip $M$ with the volume density $dx$ induced from the $K_0$-invariant volume density $\mu$ on $\goe_-$.

\begin{corollary}\label{C:235}
Let $M$ be a closed 5-manifold equipped with a rank two distribution of Cartan type.
Moreover, let $\rho$ be an irreducible complex representation of the exceptional Lie group $G_2$, consider the associated curved BGG sequence \eqref{E:BGG}, and let $\Delta_i$ denote the corresponding Rumin--Seshadri operator, see \eqref{E:RS}, constructed using the volume density $dx$ on $M$ and the fiberwise Hermitian metrics on $E_i$ described above. 
Then, as $\lambda\to\infty$,
\begin{equation}\label{E:Weyl235}
\sharp\left\{\begin{array}{c}\textrm{eigenvalues of $\Delta_i$ less than $\lambda$}\\\textrm{counted with multiplicities}\end{array}\right\}
\sim\alpha_i(\rho)\cdot\vvol(M)\lambda^{5/\kappa}
\end{equation}
where $\alpha_i(\rho)>0$ and $\vvol(M)=\int_Mdx$.
The constant $\alpha_i(\rho)$ does not depend on $M$ or the distribution $H$, nor does it depend on the Cartan involution $\theta$ satisfying \eqref{E:theta} or the Hermitian inner product $h$ satisfying \eqref{E:h} or the reduction of structure group of $\mathcal G_0$ along $K_0\subseteq G_0$, and it is also independent of the choice of positive integers $s_i$ as in \eqref{E:risi}; it only depends on $i$ and the representation $\rho$.
\end{corollary}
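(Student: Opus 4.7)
The plan is to deduce the corollary from Corollary~\ref{C:WeylRS} by exhibiting, for each irreducible complex representation $\rho$ of $G_2$, a single Rockland model sequence, intrinsic to $\rho$, to which the Heisenberg principal symbol sequence of the BGG operators is pointwise isometric at every $x\in M$. First I would set up this model: take $\noe:=\goe_-$, let $\mu$ be the translation invariant volume density on $\goe_-$ induced by $B_\theta|_{\goe_-\times\goe_-}$, let $F_i:=C^i(\goe_-;\mathbb E)$ carry the Hermitian inner products $h_i$ defined before the statement, and let $B_i\in\mathcal U_{-k_i}(\goe_-)\otimes\hom(F_i,F_{i+1})$ denote the universal left-invariant operators that represent the BGG sequence on the flat model $G/P$ when read off near the identity coset through the exponential parametrization of the big cell by $\goe_-$. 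The argument of \cite[Example~4.24]{DH17} shows this model sequence is Rockland, since that argument only inspects principal symbols.

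Next I would verify the isometry hypothesis of Corollary~\ref{C:WeylRS} pointwise. Fix $x\in M$ and a frame $u\in\mathcal K_{0,x}$. By \eqref{E:Eiassoc}, $u$ induces unitary identifications $\phi_i\colon E_{i,x}\to F_i$, and the isomorphism $\mathfrak tM=\mathcal K_0\times_{K_0}\goe_-$ provides an isomorphism of graded nilpotent Lie algebras $\dot\varphi_u\colon\mathfrak t_xM\to\goe_-$ mapping $\mu_x$ to $\mu$ by our choice of $dx$. The key claim is that under $\dot\varphi_u$ and $\phi_i$ the Heisenberg principal symbol $\sigma^{k_i}_x(D_i)$ corresponds to $B_i$. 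A change of frame amounts to the action of $K_0$, under which every ingredient is equivariant by construction, so the isometry class of the resulting model sequence is independent of $u$. Corollary~\ref{C:WeylRS} would then yield \eqref{E:Weyl235} with exponent $n/2\kappa=10/2\kappa=5/\kappa$ (the homogeneous dimension of $\goe_-$ being $2\cdot1+1\cdot2+2\cdot3=10$) and with $\alpha_i(\rho):=\alpha_i$ the constant associated to this model by \eqref{E:alphai}.

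The principal obstacle is the key claim above: that the Heisenberg principal symbol of a curved BGG operator matches the flat-model symbol $B_i$ in a Cartan-geometric frame. This is morally clear, because curvature enters the splitting-operator construction of \cite{CSS01} only through lower-order correction terms, so the leading Heisenberg symbol is controlled by the semisimple part of the Cartan connection which by regularity agrees with the flat datum. Making it precise requires unwinding the inductive definition of BGG operators and verifying that only the Chevalley--Eilenberg pieces contribute at top Heisenberg order, independently of the specific Cartan geometry.

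Finally I would dispatch the independence of $\alpha_i(\rho)$ from further choices. Two Cartan involutions satisfying \eqref{E:theta} are conjugate by an inner automorphism of $G$ preserving the grading, so they yield isometric model sequences; two Hermitian inner products on $\mathbb E$ satisfying \eqref{E:h} differ by a positive scalar by Schur's lemma, which drops out of $\Delta_i$; two reductions of $\mathcal G_0$ to $K_0$ differ by a gauge transformation along which every ingredient is equivariant; and independence from the $s_i$ was already settled at the end of the proof of Corollary~\ref{C:WeylRS}.
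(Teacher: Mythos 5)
Your strategy coincides with the paper's: build the flat model sequence $\bigl(\goe_-,\mu,D_i^{G_-}\bigr)$ from $B_\theta$ and $h$, use a frame in $\mathcal K_{0,x}$ to identify the pointwise Heisenberg principal symbol sequence isometrically with that model, and invoke Corollary~\ref{C:WeylRS}. The ``key claim'' you single out --- that $\sigma^{k_i}_x(D_i)$ is intertwined with the left invariant flat-model operator $D_i^{G_-}$ --- is not reproved in the paper either; it is exactly what \cite[Section~4]{DH17} (cited via Corollary~4.23 and Example~4.24 there) supplies, so in context this is a citation rather than a gap, and your instinct that it is the crux is correct.

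Where you diverge is the independence discussion, and there is one real thin spot. You assert that two Cartan involutions satisfying \eqref{E:theta} are conjugate by an inner automorphism \emph{preserving the grading}; general conjugacy of Cartan involutions only gives $\tilde\theta=\Ad_g^{-1}\circ\theta\circ\Ad_g$ for some $g\in G$, and the point that one may take $g=g_0\in G_0$ is precisely what the paper proves via the Cartan decomposition ($g=k\exp(X)$ with $\theta(X)=-X$, whence $\tilde\theta=\Ad_{\exp(-2X)}\circ\theta$ and $X\in\goe_0$). Without $g_0\in G_0$ you cannot transport the model data ($\mu$, the $h_i$, the $B_i$) along the conjugation. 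Once that is supplied, your route --- arguing that the two models are \emph{isometric} and hence share the constant from \eqref{E:alphai} --- is a legitimate and arguably cleaner alternative to the paper's, which instead adjusts the reduction so that the Rumin--Seshadri operators literally coincide. Two smaller remarks: independence of the reduction does not need any gauge-transformation argument (which is also not quite right as stated, since different reductions genuinely change $dx$ and the bundle metrics, hence $\Delta_i$ and $\vvol(M)$); it is immediate because $\alpha_i$ depends only on the model sequence, which never sees the reduction. And when you change $\theta$, the class of compatible $h$'s changes with it, so the Schur's lemma step should be applied after pulling back along $\rho(g_0)$, as in \eqref{E:th}; the uniform scalar then indeed drops out of the adjoints.
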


\begin{proof}
Fix $x\in M$.
Every $G_0$-equivariant identification $(\mathcal G_0)_x\cong G_0$ induces an isomorphism of graded nilpotent Lie algebras $\dot\varphi\colon\mathfrak t_xM\to\goe_-$ and isomorphisms of vector spaces $\phi_i\colon E_{i,x}\to H^i(\goe_-;\mathbb E)$ which intertwine the principal Heisenberg symbol $\sigma^{k_i}_x(D_i)$ with $D_i^{G_-}$, the corresponding left invariant operator on the nilpotent Lie group $G_-=\exp(\goe_-)$.
If the frame $(\mathcal G_0)_x\cong G_0$ is induced from a $K_0$-equivariant identification $(\mathcal K_0)_x\cong K_0$, then the isomorphisms $\phi_i$ are unitary and $\dot\varphi$ maps the volume density $\mu_x$ on $\mathfrak t_xM$ induced by $dx$ to the volume density $\mu$ on $\goe_-$. 
Hence, at every point $x\in M$, the principal symbol sequence
$$
\bigl(\mathfrak t_xM,\mu_x,\sigma^{k_i}_x(D_i)\in\mathcal U_{-k_i}(\mathfrak t_xM)\otimes\hom(E_{i,x},E_{i+i,x})\bigr)
$$
is isometric to the model sequence
$$
\bigl(\goe_-,\mu,D_i^{G_-}\in\mathcal U_{-k_i}(\goe_-)\otimes\hom(H^i(\goe_-;\mathbb E),H^{i+1}(\goe_-;\mathbb E))\bigr),
$$
cf.\ Definition~\ref{D:model}. 
According to Corollary~\ref{C:WeylRS} there exists $\alpha_i(\rho)>0$ such that \eqref{E:Weyl235} holds, and this constant does not depend on the choice of $s_i$ as in \eqref{E:risi}.

It remains to show that $\alpha_i(\rho)$ does not depend on $\theta$ or $h$.
To this end, suppose $\tilde\theta$ is another Cartan involution on $\goe$ such that $\tilde\theta(\goe_i)=\goe_{-i}$, for all $i$, cf.\ \eqref{E:theta}.
Then there exists $g\in G$ such that $\tilde\theta=\Ad_g^{-1}\circ\,\theta\circ\Ad_g=\Ad_{g^{-1}\Theta(g)}\circ\,\theta$.
In view of the Cartan decomposition, we may write $g=k\exp(X)$ with $k\in G$ and $X\in\goe$ such that $\Theta(k)=k$ and $\theta(X)=-X$.
Hence, $g^{-1}\Theta(g)=\exp(-2X)$, and we obtain $\tilde\theta=\Ad_{\exp(-2X)}\circ\,\theta$.
Since $(\tilde\theta\theta^{-1})(\goe_i)=\goe_i$, we have $\exp(-2X)\in G_0$, see \eqref{E:G0}.
Using $\theta(X)=-X$, we actually conclude $X\in\goe_0$, hence $g_0:=\exp(X)\in G_0$, and
\begin{equation}\label{E:ttheta}
\tilde\theta=\Ad_{g_0}^{-1}\circ\,\theta\circ\Ad_{g_0}.
\end{equation}
For the corresponding global Cartan involution $\tilde\Theta$ we get $\tilde\Theta(g)=g_0^{-1}\Theta(g_0gg_0^{-1})g_0$, for all $g\in G$, and thus the corresponding maximal compact subgroups are related by $\tilde K=\{g\in G:\tilde\Theta(g)=g\}=g_0^{-1}Kg_0$ and
\begin{equation}\label{E:tK0}
\tilde K_0=\{g\in G_0:\tilde\Theta(g)=g\}=g_0^{-1}K_0g_0.
\end{equation}

Furthermore, suppose $\tilde h$ is another Hermitian inner product on $\mathbb E$ such that $\tilde h(\rho'(X)v,w)=-\tilde h(v,\rho'(\tilde\theta(X))w)$ for all $X\in\goe$ and $v,w\in\mathbb E$, cf.~\eqref{E:h}.
Using $\rho'(\Ad_{g_0}(X))=\rho(g_0)\rho'(X)\rho(g_0)^{-1}$ and \eqref{E:ttheta} as well as \eqref{E:h}, we obtain the relation $h(\rho(g_0)\rho'(X)v,\rho(g_0)w)=-h(\rho(g_0)v,\rho(g_0)\rho'(\tilde\theta(X))w)$.
Hence, by Schur's lemma, there exists a constant $C>0$ such that, for all $v,w\in\mathbb E$,
\begin{equation}\label{E:th}
\tilde h(v,w)=C\cdot h\bigl(\rho(g_0)v,\rho(g_0)w\bigr).
\end{equation}
By invariance of the Killing form and \eqref{E:ttheta}, we have 
\begin{equation}\label{E:Bttheta}
B_{\tilde\theta}(X,Y)=B_\theta\bigl(\Ad_{g_0}(X),\Ad_{g_0}(Y)\bigr).
\end{equation}
Hence, the $\tilde K_0$-invariant Hermitian inner product $\tilde h_i$ induced by $B_{\tilde\theta}$ and $\tilde h$ on $H^i(\goe_-;\mathbb E)$ is related to the $K_0$-invariant Hermitian inner product $h_i$ via
\begin{equation}\label{E:thi}
\tilde h_i(\xi,\eta)=C\cdot h_i(g_0\cdot\xi,g_0\cdot\eta)
\end{equation}
where $\xi,\eta\in H^i(\goe_-;\mathbb E)$ and $g_0\cdot\xi$ denotes the $G_0$-action.

Suppose, moreover, $\tilde{\mathcal K}_0\to\mathcal G_0$ is a reduction of the structure group along the inclusion $\tilde K_0\subseteq G_0$.
Let $\mathcal K_0$ denote the principal $K_0$-bundle with underlying manifold $\tilde{\mathcal K}_0$ where the right action by $k\in K_0$ is given by the principal $\tilde K_0$-action of $g_0^{-1}kg_0$ on $\tilde{\mathcal K}_0$, see \eqref{E:tK0}.
We consider the reduction of structure group $\mathcal K_0\to\mathcal G_0$ along the inclusion $K_0\subseteq G_0$ obtained by composing the reduction $\tilde{\mathcal K}_0\to\mathcal G_0$ with the diffeomorphism $\mathcal G_0\to\mathcal G_0$ provided by right action with $g_0^{-1}$.
Using \eqref{E:thi}, we conclude that via the canonical identifications
$$
\tilde{\mathcal K}_0\times_{\tilde K_0}H^i(\goe_-;\mathbb E)=E_i=\mathcal K_0\times_{K_0}H^i(\goe_-;\mathbb E),
$$
cf.~\eqref{E:Eiassoc}, the Hermitian inner products on $E_i$ induced by $\tilde h_i$ and $h_i$, respectively, differ by the constant $C$.
Similarly, using \eqref{E:Bttheta}, we see that the induced volume densities on $M$ coincide.
This implies that the corresponding $L_2$-inner products on $\Gamma(E_i)$ differ by a constant which is independent of $i$, hence they give rise to the same formally adjoint operators.

Summarizing, we see that the Rumin--Seshadri operators associated with $\tilde\theta$, $\tilde h$, and a reduction $\tilde{\mathcal K_0}\to\mathcal G_0$ coincide with the Rumin--Seshadri operators associated with $\theta$, $h$, and the reduction $\mathcal K_0\to\mathcal G_0$ described in the previous paragraph.
We conclude that the constant $\alpha_i(\rho)$ does not depend on $\theta$ or $h$.
\end{proof}

Let us conclude this section with some remarks on the constant $\alpha_0(\rho_0)$ in Corollary~\ref{C:235} for the trivial representation $\rho_0$.
For the nilpotent Lie group $G_-$ all irreducible unitary representations have been calculated explicitly in \cite[Proposition~8]{D58}.
For each $(\lambda,\mu,\nu)\in\mathbb R^3$ such that $(\lambda,\mu)\neq(0,0)$ there is an irreducible unitary representation of $G_-$ on $L^2(\mathbb R)$ with the standard inner product $\langle\psi_1,\psi_2\rangle=\int\bar\psi_1(\theta)\psi_2(\theta)d\theta$ where $d\theta$ denotes the Lebesgue measure.
Moreover, these representations are mutually non-equivalent.
There are further, more singular representations, but they form a set of measure zero with respect to the Plancherel measure.
On the representations parametrized by $(\lambda,\mu,\nu)$ above, the Plancherel measure is $d\lambda\,d\mu\,d\nu$, i.e.\ the Lebesgue measure on $\mathbb R^3$, see \cite[Equation~(26)]{D58}.
In the representation corresponding to $(\lambda,\mu,\nu)$, the subLaplacian on $G_-$ takes the form of a Laplacian with quartic potential,
$$
-\hat\Delta^{\lambda,\mu,\nu}=\frac1{\lambda^2+\mu^2}\frac{d^2}{d\theta^2}-\frac{\bigl((\lambda^2+\mu^2)^2\theta^2+\nu\bigr)^2}{4(\lambda^2+\mu^2)},
$$
see \cite[Section~3.3.2]{BGR14}.
Denoting the solution of
$$
\tfrac\partial{\partial t}\Psi_t=-\hat\Delta^{\lambda,\mu,\nu}\Psi_t,\qquad\lim_{t\searrow0}\Psi_t=\delta_{\bar\theta}
$$
by $\Psi^{\lambda,\mu,\nu}_t(\theta,\bar\theta)$, the heat kernel of the subLaplacian at the origin in $G_-$ can be expressed in the form
$$
k_t^{G_-}(o)=\int_{(\lambda,\mu)\neq(0,0)}d\lambda\,d\mu\,d\nu\int_{-\infty}^\infty d\theta\,\Psi_t^{\lambda,\mu,\nu}(\theta,\theta),
$$
see \cite[Corollary~29]{ABGR09} and Theorem~\cite[Equation~(25)]{BGR14}.
Integrating out the circular symmetry and using \eqref{E:alphai}, we obtain the following expression for one of the constants in Corollary~\ref{C:235}
$$
\alpha_0(\rho_0)=\frac{2\pi}{5!}\int_0^\infty\mu d\mu\int_{-\infty}^\infty d\nu\int_{-\infty}^\infty d\theta\,\Psi_1^{0,\mu,\nu}(\theta,\theta).
$$
No explicit formulas for $\Psi_t^{\lambda,\mu,\nu}(\theta,\bar\theta)$ appear to be know \cite{BGR14}.
For some 2-step nilpotent groups explicit formulas for the heat kernel of the subLaplacian can be found in \cite{C79,G77,BGG00}.

\bibliographystyle{abbrv}
  \bibliography{hypo}

\end{document}